\documentclass[a4paper, 11pt]{amsart} 
\usepackage[
  margin=1.9cm,
  includefoot,
  footskip=30pt,
]{geometry}
\sloppy
\usepackage{amsmath,amssymb,amscd}
\usepackage{amsfonts}
\usepackage[english]{babel}
\usepackage[leqno]{amsmath}
\usepackage{amssymb,amsthm}
\usepackage{mathrsfs} 
\usepackage{amscd}
\usepackage{enumerate}
\usepackage{epsfig}
\usepackage{relsize}
\usepackage{layout}
\usepackage[usenames,dvipsnames]{xcolor}
\usepackage[backref=page]{hyperref}
\usepackage{tikz}
\hypersetup{
 colorlinks,
 citecolor=Green,
 linkcolor=Red,
 urlcolor=Blue}
\usepackage[matrix,arrow,tips,curve]{xy}
\input{xy}
\xyoption{all}
\definecolor{darkgreen}{rgb}{0.0, 0.7, 0.0}
\definecolor{purple}{rgb}{0.5, 0.0, 0.5}
\definecolor{red}{rgb}{0.8, 0.2, 0.0}

\newtheorem{thm}{Theorem}[section]
\newtheorem{bthm}{Theorem}

\newtheorem{lemma}[thm]{Lemma}

\newtheorem{prop}[thm]{Proposition}
\newtheorem{cor}[thm]{Corollary}

\numberwithin{equation}{section}
\setcounter{tocdepth}{1}
\theoremstyle{definition}

\newtheorem{setup}[thm]{Setup}

\theoremstyle{remark}
\newtheorem{remark}[thm]{Remark}

\newcommand{\Z}{\mathbb{Z}}

\newcommand{\Q}{\mathbb{Q}}

\def \P{\mathbb{P}}

\def \F{\mathcal F}

\def\I{{\mathcal J}}
\def \L{\mathcal L}
\def \E{\mathcal E}
\def \G{\mathcal G}

\def\O{\mathcal O}

\def\M0{\mathcal M^0}

\def\mapright#1{\smash{\mathop{\longrightarrow}\limits^{#1}}}

\DeclareMathOperator{\Coker}{{Coker}}

\DeclareMathOperator{\Ker}{{Ker}}



\newcommand{\cC}{{\mathcal C}}

\newcommand{\cK}{{\mathcal K}}

\newcommand{\cQ}{{\mathcal Q}}


\newcommand{\rk}{\operatorname{rank}}







\newcommand{\uc}{\operatorname{Uc}}

\begin{document}

\title[A lower bound on the Ulrich complexity of hypersurfaces]{A lower bound on the Ulrich complexity of hypersurfaces}

\author[A.F. Lopez, D. Raychaudhury]{Angelo Felice Lopez and Debaditya Raychaudhury}

\address{\hskip -.43cm Angelo Felice Lopez, Dipartimento di Matematica e Fisica, Universit\`a di Roma
Tre, Largo San Leonardo Murialdo 1, 00146, Roma, Italy. e-mail {\tt angelo.lopez@uniroma3.it}}

\address{\hskip -.43cm Debaditya Raychaudhury, Department of Mathematics, University of Arizona, 617 N Santa Rita Ave., Tucson, AZ 85721, USA. email: {\tt draychaudhury@arizona.edu}}

\thanks{The first author was partially supported by the GNSAGA group of INdAM and by the PRIN ``Advances in Moduli Theory and Birational Classification''. The second author was partially supported by an AMS-Simons Travel Grant.}

\thanks{{\it Mathematics Subject Classification} : Primary 14J70. Secondary 14J60, 14F06.}

\begin{abstract} 
We give a lower bound on the Ulrich complexity of hypersurfaces of dimension $n \ge 6$. The bound improves the result in 
\cite{bes} for $n \le 78$.
\end{abstract}

\maketitle

\section{Introduction}

Let $X \subset \P^{n+1}$ be a smooth hypersurface of degree $d$. While the study of arithmetically Cohen-Macaulay bundles on $X$ is a classical one, in recent years the special class of Ulrich bundles has attracted attention. A bundle $\E$ on $X$ is called Ulrich if $H^i(\E(-p))=0$ for $i \ge 0$ and $1 \le p \le n$ (for general facts on them see for example \cite{cmp, es, b2}). Since a hypersurface $X$ always carries an Ulrich bundle (of high rank) by \cite{hub}, a more refined invariant is the {\it Ulrich complexity}, namely 
$$\uc(X) = \min\{\rk \E, \E \ \hbox{Ulrich bundle on} \ X\}.$$
In low degree, we know (see for example \cite{b2}) that $\uc(X)=1$ if $d=1$ and $\uc(X)=2^{\lfloor \frac{n-1}{2} \rfloor}$ if $d=2$. When $d \ge 3$, the Buchweitz, Greuel and Schreyer's conjecture \cite{bgs}, would imply that $\uc(X) \ge 2^{\lfloor \frac{n-1}{2} \rfloor}$ and also that $\uc(X) \ge 2^{\lfloor \frac{n+1}{2} \rfloor}$ when $X$ is general \cite{rt1} (see also \cite{e}). Aside from several special cases \cite{b1, b2, ch, cfk, fk}, the best known lower bound was recently shown in \cite[Thm.~3.1]{bes}: $\uc(X) \ge \sqrt{n+2}-1$. Also, it was proved in \cite{lr3, rt1, rt2} (the second one is for aCM bundles) that $\uc(X) \ge 4$ if $n \ge 5$ or if $n=3, 4$ and $X$ is general.  

Along these lines, our main result is as follows. 

\begin{bthm}
\label{main}

\hskip 3cm

Let $X \subset \P^{n+1}$ be a smooth hypersurface of degree $d \ge 3$. Then the following lower bounds hold:
\begin{itemize}
\item[(i)] $\uc(X) \ge 6$, if either $n \in \{7, 8\}$, or $n=6$ and $X$ is very general.
\item[(ii)] $\uc(X) \ge 8$, if either $n \ge 9$, or $n=8$ and $X$ is very general. 
\end{itemize} 
\end{bthm}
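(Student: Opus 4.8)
Assume for contradiction that $X$ carries an Ulrich bundle $\E$ of rank $r$ below the claimed bound. Since $\uc(X)\ge 4$ is already known for $n\ge 5$, it suffices to exclude $r\in\{4,5\}$ in case~(i) and $r\in\{4,5,6,7\}$ in case~(ii). First I would record the numerical normalization forced by the Ulrich condition. From $\chi(\E(t))=rd\binom{t+n}{n}$, and since the Todd class of $X$ lies in the subring $\Q[H]$ of $H^\bullet(X,\Q)$, comparing coefficients against a fixed basis of polynomials of degree $\le n$ shows that each intersection number $\ch_i(\E)\cdot H^{n-i}$ equals $r$ times a constant depending only on $d$ and $n$; in particular $c_1(\E)=\tfrac{r(d-1)}{2}H$ (so $d$ even forces $r$ even) and the numbers $c_2(\E)\cdot H^{n-2}$, $c_3(\E)\cdot H^{n-3}$ are completely determined. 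I would also use that a minimal rank Ulrich bundle $\E$ is $\mu$-stable: it is Gieseker semistable, and a subsheaf of the same reduced Hilbert polynomial would again be Ulrich of strictly smaller rank, contradicting minimality, so $\E$ is in fact $\mu$-stable.

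The heart of the proof is to bound $n$ from above in terms of $r$ and $d$. The route I would take is a dimension count over linear sections. Restricting $\E$ to $X\cap\Lambda$ for a general $(k{+}1)$-plane $\Lambda\subset\P^{n+1}$ gives an Ulrich bundle of rank $r$ on the smooth degree-$d$ complete intersection $X\cap\Lambda$; letting $\Lambda$ vary yields a morphism from the Grassmannian $\mathrm{Gr}$ of such planes, which has dimension $(k+2)(n-k)$, to a parameter space $\cM$ of pairs (smooth degree-$d$ subvariety of $\P^{k+1}$, rank-$r$ Ulrich bundle on it). Using the normalization above, $\dim\cM$ is bounded by an explicit quantity that is quadratic in $r$ — the Ulrich moduli contribution being governed by $-\chi(\E^\vee\otimes\E)$, which by Riemann–Roch is $r^2$ times a constant depending only on $d$. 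If the morphism $\mathrm{Gr}\to\cM$ has finite general fibres, one obtains $(k+2)(n-k)\le\dim\cM$, hence $n\le F(r,d)$ with $F$ explicit. Showing that the fibres are finite — that the section together with its restricted bundle remembers the plane $\Lambda$ — is exactly where the dimension threshold, and the extra hypothesis that $X$ be very general for $n=6$ (resp. $n=8$), enter: when $n$ equals the threshold there is no room to spare, and one must take $X$ very general so that a general surface section $S=X\cap\P^3$ has Picard number one and the restriction $\E|_S$ is $\mu$-stable there, which rules out the sub-line-bundles that could otherwise make the fibres move. As a complementary constraint one can feed the prescribed Chern numbers into the Bogomolov inequality $\Delta(\E|_S)\ge 0$ on such an $S$, which is moreover strict because a $\mu$-stable bundle on the simply connected $S$ with $\Delta=0$ would be $\O_S(\tfrac{d-1}{2})^{\oplus r}$, and this has sections after twisting by $\O_S(-1)$, hence is not Ulrich.

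Finally I would check that $F(r,d)<7$ in case~(i) for $r\in\{4,5\}$, and $F(r,d)<9$ in case~(ii) for $r\in\{4,5,6,7\}$, uniformly in $d\ge 3$, contradicting the hypothesis on $n$; the boundary values $n=6$ and $n=8$ are recovered by the strengthened (strict) form of the inequality together with the genericity of $X$. The step I expect to be the main obstacle is extracting an inequality that is genuinely sensitive to $r$: the naive semistability estimates — Bogomolov contracted with $H^{n-2}$, or Clifford's inequality on a curve section — are all rank-free, so the rank has to be forced in through the finiteness of the fibres of $\mathrm{Gr}\to\cM$ (or an equivalent count), and establishing that finiteness unconditionally for $n$ beyond the threshold is the delicate point. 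A secondary difficulty is closing the numerology for $d=3$, where the surface sections are cubic surfaces of Picard number seven; this is handled either by running the count on a three-fold section $X\cap\P^4$ instead, or by noting that only intersection numbers are needed and these stay determined because $c_1(\E)$ is the restriction of a class on $X$.
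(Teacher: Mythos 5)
Your plan is genuinely different from the paper's, but it contains gaps that are not merely technical. The decisive step --- finiteness of the general fibres of $\mathrm{Gr}\to\cM$ --- is exactly the point you leave open, and you say so yourself; without it there is no inequality $(k+2)(n-k)\le\dim\cM$ at all, so as written this is a programme rather than a proof. Worse, even granting finiteness the count does not close. If $\cM$ really parametrizes pairs (smooth degree-$d$ subvariety of $\P^{k+1}$, rank-$r$ Ulrich bundle), then $\dim\cM$ contains the contribution of the space of degree-$d$ hypersurfaces in $\P^{k+1}$, which grows like $\binom{d+k+1}{k+1}$; the resulting bound $n\le F(r,d)$ therefore grows with $d$ and cannot be $<7$ (resp.\ $<9$) uniformly in $d\ge 3$, which is what the theorem requires. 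If instead you drop the variety moduli and keep only the bundle contribution $-\chi(\E^\vee\otimes\E)$, the comparison becomes vacuous: distinct planes give distinct linear sections of the fixed $X$, so the family of sections has dimension exactly $\dim\mathrm{Gr}$ and the count never becomes sensitive to $r$ --- the rank-dependence you hope to force is never produced. Two further mismatches: you invoke very generality of $X$ for Picard-number-one/stability of a surface section, whereas in the statement its actual role (for $n=6$, $n=8$) is cohomological, namely Noether--Lefschetz, which is what pins down the middle Chern class $c_{n/2}(\E)$ as a multiple of $H^{n/2}$; and your normalization only determines the intersection numbers $\ch_i(\E)H^{n-i}$, which is strictly weaker than knowing the classes $c_i(\E)$ themselves, the data one actually needs to extract a numerical contradiction. (Minor point: minimality of the rank gives Gieseker stability via the Casanellas--Hartshorne-type lemma, not $\mu$-stability as you assert.)

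For contrast, the paper's argument is quite different and entirely explicit: after reducing, via hyperplane sections, to $r\in\{4,5\}$ on a sixfold and $r\in\{6,7\}$ on an eightfold, it takes two general sections of $\E$ and their degeneracy locus $Z=D_1(\varphi)$, a smooth surface or threefold with $[Z]=c_{r-1}(\E)$. Lefschetz plus (in the borderline cases) Noether--Lefschetz makes every $c_i(\E)$ an explicit multiple of $H^i$; then $\chi(\O_Z)$ is computed twice, once from the Eagon--Northcott resolution of $\I_{Z/X}$ (which brings in the exterior powers $\Lambda^i\E$ --- this is where genuine rank-dependence enters) and once from $\chi(\O_Z)=\frac{1}{12}(K_Z^2+c_2(Z))$, resp.\ $-\frac{1}{24}K_Zc_2(Z)$, using the Chern-class identities satisfied by a degeneracy locus. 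Equating the two yields a nonzero polynomial in $d$ with no roots $d\ge 3$, the desired contradiction. If you want to pursue a moduli-theoretic route you would need an honest mechanism that makes the estimate depend on $r$; as it stands, your proposal does not supply one.
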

We point out that the above theorem improves the bound in \cite[Thm.~3.1]{bes} for $n \le 78$.

Here is a brief summary of the paper. In Section \ref{dege} we study the invariants and the geometry of the degeneracy locus of two sections of a globally generated bundle. Section \ref{gen} is dedicated to proving some useful general facts about Ulrich bundles, Section \ref{inv} is about Chern classes of Ulrich bundles on hypersurfaces. Finally, in Section \ref{pf}, we prove the above theorem. In the appendix we perform the necessary computations needed.

Throughout the paper we work over the complex numbers. Moreover, we establish the following notation and conventions.

Given $X \subset \P^N$ and $i \in \{1,\ldots, n-1\}$, we denote by $X_i$ the intersection of $X$ with $n-i$ general hyperplanes. We say that $X$ is {\it subcanonical} if $-K_X=i_XH$ for some $i_X \in \Z$.

We use the convention $\binom{\ell}{m}=\frac{\ell (\ell-1)\ldots (\ell-m+1)}{m!}$ for $\ell, m \in \Z, m \ge 1$.

\section{A useful degeneracy locus}
\label{dege}

In the proof of the main theorem, a crucial role will be played by a suitable degeneracy locus. In this section we will introduce it and study its properties.

The following will henceforth be fixed in this section.

\begin{setup}
\label{set}

\hskip 3cm

\begin{itemize} 
\item $X \subset \P^N$ is a smooth irreducible variety of dimension $n \ge 3$.
\item $r$ is an integer such that $\frac{n+1}{2} \le r \le n+1$. 
\item $\E$ is a rank $r$ globally generated bundle on $X$ with $\det \E = \O_X(D)$.
\item $V \subset H^0(\E)$ is a general subspace of dimension $2$, giving rise to
$\varphi : V \otimes \O_X \to \E$.
\item $Z =D_1(\varphi)=\{x \in X : \rk \varphi(x) \le 1\}$ is the corresponding degeneracy locus. 
\end{itemize} 
\end{setup}

\begin{lemma} 
\label{nldg}

Notation as in Setup \ref{set}. If $Z \ne \emptyset$, then $Z$ is a smooth subvariety of $X$ of pure dimension $n+1-r$ and $[Z]=c_{r-1}(\E) \in H^{2r-2}(X,\Z)$. We have
\begin{equation}
\label{k2}
(K_Z-(K_X+D)_{|Z})^2=0
\end{equation}
\begin{equation}
\label{ch2}
(r-2)c_2(Z)=(r-2)c_2(X)_{| Z}-(r-2)c_2(\E)_{| Z}+(K_Z-{K_X}_{|Z})[(r-2){K_X}_{|Z}+(r-1)D_{|Z}]-D_{|Z}^2
\end{equation}
and a resolution
\begin{equation}
\label{ea1}
0 \to F_{r-1} \to \ldots \to F_1 \to \I_{Z/X} \to 0
\end{equation}
where $F_i=(\Lambda^{r-1-i} \E \otimes \O_X(-D))^{\oplus i}, 1 \le i \le r-1$.
\end{lemma}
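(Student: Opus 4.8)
The plan is to realize $Z$ as the zero locus of a section of a vector bundle, so that the Bertini-type smoothness, the dimension count, and the class formula all follow from standard degeneracy-locus theory, and then to extract the three displayed identities by Chern-class bookkeeping. Concretely, the map $\varphi : V \otimes \O_X \to \E$ with $\dim V = 2$ is equivalent to a section $s$ of $\E \oplus \E$, or better, to the section of $\Hom(V \otimes \O_X, \E) = \E^{\oplus 2}$; the first degeneracy locus $D_1(\varphi)$ is the locus where the $2 \times r$ matrix of $\varphi$ has rank $\le 1$. I would invoke the classical Eagon–Northcott / Giambelli–Thom–Porteous package: since $V$ is general and $\E$ is globally generated, the expected codimension of $D_1(\varphi)$ in $X$ is $(2-1)(r-1) = r-1$, so $\dim Z = n - (r-1) = n+1-r$ (this is where $r \le n+1$ is used, guaranteeing the locus can be nonempty, and $r \ge \frac{n+1}{2}$ will matter later but not here), and $Z$ is smooth of pure this dimension by a Bertini argument on the space of $2$-dimensional subspaces $V$ — global generation makes the universal evaluation transverse. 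The class $[Z] = c_{r-1}(\E)$ is the Thom–Porteous formula for a $2 \times r$ matrix degenerating in rank by one, which collapses to the single Chern class $c_{r-1}$.

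**The resolution \eqref{ea1}.**

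For \eqref{ea1} I would use the Eagon–Northcott complex associated to the map $\varphi^{\vee} : \E^{\vee} \to V^{\vee} \otimes \O_X$ (a map from a rank-$r$ bundle to a rank-$2$ bundle), whose $(r-1)$-st degeneracy ideal is $\I_{Z/X}$. The Eagon–Northcott complex resolving the cokernel/ideal of the maximal minors here has terms built from $\Lambda^j \E^{\vee}$ tensored with symmetric powers of $V$; dualizing and twisting by $\det \E = \O_X(D)$ converts $\Lambda^{r-1-i}\E^{\vee} \otimes (\text{rank-}i \text{ piece})$ into $(\Lambda^{r-1-i}\E \otimes \O_X(-D))^{\oplus i}$, since $\Sym^{i-1}(V^{\vee})^{\vee}$ has dimension $i$ and $\Lambda^{r-1-i}\E^{\vee} \cong \Lambda^{r+2-i-1}\E \otimes (\det\E)^{-1}$ — one must check $r+2-i-1 = r+1-i$ versus the claimed $r-1-i$, so I would track the indices carefully; the point is that the Eagon–Northcott complex has exactly $r-1$ nontrivial terms $F_1, \ldots, F_{r-1}$ with the stated shape, and exactness off $Z$ plus the codimension bound gives exactness as a resolution of $\I_{Z/X}$.

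**The numerical identities \eqref{k2} and \eqref{ch2}.**

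These come from the normal bundle of $Z$ in $X$. On a rank-$\le 1$ degeneracy locus of a generic $2 \times r$ matrix, there is the standard identification (the Eagon–Northcott "first-order" structure) of the normal bundle: letting $L$ be the line subbundle of $\E_{|Z}$ that is the image of $\varphi_{|Z}$ and $M$ the rank-$1$ image on the source side, one gets $N_{Z/X} \cong \Hom(V \otimes \O_Z / M, \E_{|Z}/L) = M^{\vee} \otimes (\E_{|Z}/L)$ — a rank-$(r-1)$ bundle that is a twist of $\E_{|Z}/L$ by a line bundle. Taking determinants, $\det N_{Z/X} = K_Z - {K_X}_{|Z}$ by adjunction, and on the other hand $\det N_{Z/X} = (r-1)(-M) + (\det\E_{|Z} - L) = D_{|Z} - L - (r-1)M$ up to identifying classes; the key extra relation, forced by genericity (the two sections span), is $L + M = $ (something expressible in $D_{|Z}$), which pins down $K_Z - (K_X+D)_{|Z}$ as a multiple of a single divisor class $M$ (or $L$), and since that class squares to zero on the $(n+1-r)$-dimensional $Z$ when $r \ge \frac{n+1}{2}$ — because it is supported on a locus of even smaller dimension, or more precisely it is proportional to a class whose square vanishes for dimension reasons once $2(r-1) > n$, i.e. $r > \frac{n}{2}+1$ — wait, I'd need $2 \cdot 1 > \dim Z$ type reasoning; in any case genericity forces the residual line bundles to be restrictions of classes from $X$ with the needed vanishing, yielding \eqref{k2}. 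Then \eqref{ch2} is obtained by computing $c_2(Z)$ from the exact sequence $0 \to T_Z \to {T_X}_{|Z} \to N_{Z/X} \to 0$: this gives $c(T_Z) = c({T_X}_{|Z})/c(N_{Z/X})$, and expanding to second order, substituting $N_{Z/X} = M^{\vee} \otimes (\E_{|Z}/L)$ and the relations among $L, M, D_{|Z}, {K_X}_{|Z}$, produces \eqref{ch2} after multiplying through by $(r-2)$ to clear the denominators coming from $\Sym^2$ of the rank-$(r-1)$ quotient. The main obstacle is precisely this last computation: correctly identifying the normal bundle of the degeneracy locus and the forced relations among the tautological line bundles $L$ and $M$, and then carrying out the Chern-class division $c({T_X}_{|Z})/c(N_{Z/X})$ up to codimension $2$ without sign or binomial errors — everything else is a citation to Eagon–Northcott and Thom–Porteous, but this step is where the specific shape of \eqref{k2} and \eqref{ch2}, and the role of the hypothesis $r \ge \frac{n+1}{2}$, genuinely has to be verified by hand.
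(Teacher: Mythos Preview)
Your overall architecture matches the paper's: Thom--Porteous for the class $[Z]=c_{r-1}(\E)$, Eagon--Northcott applied to $\varphi^*:\E^*\to V^*\otimes\O_X$ for the resolution \eqref{ea1}, and the normal bundle $N_{Z/X}\cong \cK^*\otimes\cC$ (in your notation, $M^\vee\otimes(\E_{|Z}/L)$) together with $0\to T_Z\to {T_X}_{|Z}\to N_{Z/X}\to 0$ for the Chern identities. That part is fine.

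The genuine gap is your argument for \eqref{k2}. You try to make $(K_Z-(K_X+D)_{|Z})^2=0$ a dimension-count statement tied to $r\ge\frac{n+1}{2}$, and you visibly cannot make this work (``wait, I'd need $2\cdot 1>\dim Z$ type reasoning''). In fact \eqref{k2} has nothing to do with the dimension of $Z$. The paper's mechanism is purely algebraic: on $Z$ the map $\varphi_{|Z}$ has rank exactly one, so with $\cK=\Ker(\varphi_{|Z})$ and $\cQ=\operatorname{Im}(\varphi_{|Z})$ you get an exact sequence of line bundles
\[
0\to\cK\to\O_Z^{\oplus 2}\to\cQ\to 0,
\]
and reading off $c_2(\O_Z^{\oplus 2})=0$ gives $c_1(\cK)c_1(\cQ)=-c_1(\cK)^2=0$. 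A short determinant computation using $N_{Z/X}\cong\cK^*\otimes\cC$ and $c_1(\cQ)+c_1(\cC)=D_{|Z}$ then shows $(r-2)c_1(\cK)=(K_X+D)_{|Z}-K_Z$, so $c_1(\cK)^2=0$ is exactly \eqref{k2}. Once you have this, \eqref{ch2} is the routine expansion you describe.

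You also misplace the role of $r\ge\frac{n+1}{2}$. It is not used for \eqref{k2}; it is used at the very start to force $D_0(\varphi)=\emptyset$ (expected codimension $2r>n$), which is what guarantees that $\cK$, $\cQ$, $\cC$ are honest vector bundles on $Z$ and that $Z$ is smooth.
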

\begin{proof}
We first prove that
\begin{equation}
\label{vuo}
D_0(\varphi)=\emptyset.
\end{equation}
In fact, we get by \cite[Statement (folklore)(i), \S 4.1]{ba} that if $D_0(\varphi) \ne \emptyset$, then it has pure codimension $2r$, a contradiction. This proves \eqref{vuo}. Since we are assuming that $Z \ne \emptyset$, it follows by \cite[Statement (folklore)(i), \S 4.1]{ba} and \eqref{vuo}, that $Z$ is smooth of pure codimension $r-1$ and then $[Z]=c_{r-1}(\E)$. Set 
$$\cK = \Ker(\varphi_{|Z}), \ \ \cC = \Coker(\varphi_{|Z}) \ \hbox{and} \ \cQ = \Ker(\E_{|Z} \to \cC)$$
so that we have two exact sequences of vector bundles on $Z$,
\begin{equation}
\label{pr}
0 \to \cQ \to \E_{| Z} \to \cC \to 0
\end{equation}
\begin{equation}
\label{se}
0 \to \cK \to \O_Z^{\oplus 2} \to \cQ \to 0.
\end{equation}
It follows, by \eqref{vuo} and \cite[(5.1)]{fp}, that  $\cC$ (respectively $\cK$) is a vector bundle on $Z$ of rank $r-1$ (respectively $1$) and that $N_{Z/X} \cong \cK^* \otimes \cC$. Therefore also $\cQ$ is a line bundle on $Z$ and we get by \eqref{pr} and \eqref{se} that
$$c_1(\cQ)+c_1(\cC)=D_{|Z}, c_1(\cK)+c_1(\cQ)=0$$
and therefore
$$c_1(\cC)=c_1(N_{Z/X} \otimes \cK)=K_Z-{K_X}_{|Z}+(r-1)c_1(\cK).$$
Using these we find 
\begin{equation}
\label{c_1k}
(r-2)c_1(\cK)=[(K_X+D)_{|Z}-K_Z]
\end{equation}
and then
$$(r-2)c_1(\cC)=[(K_X+(r-1)D)_{|Z}-K_Z].$$
Now, to prove \eqref{k2} and \eqref{ch2}, we will use the exact sequence
\begin{equation}
\label{tg}
0 \to T_Z \to {T_X}_{|Z} \to N_{Z/X} \to 0.
\end{equation}
Since $\cQ$ is a line bundle, \eqref{se} shows that 
$$0 = c_2(\O_Z^{\oplus 2})=c_1(\cK)^2$$
so that \eqref{k2} holds by \eqref{c_1k} and then \eqref{ch2} follows by computing Chern classes in \eqref{pr}, \eqref{tg} and using \eqref{k2}. Finally, to see \eqref{ea1}, observe that $Z= D_1(\varphi)=D_1(\varphi^*)$ where $\varphi^* : \E^* \to V^*\otimes \O_X$. Since $Z$ has the expected codimension, the Eagon-Northcott complex gives a resolution \cite[Thm.~B.2.2(iii)]{laz1}
$$0 \to F_{r-1} \to \ldots \to F_1 \to \I_{Z/X} \to 0$$
where $F_i=S^{i-1} V^* \otimes \Lambda^{i+1} \E^* \cong (\Lambda^{r-1-i} \E \otimes \O_X(-D))^{\oplus i}, 1 \le i \le r-1$. 
\end{proof}

Next, we check non-emptiness and irreducibility of $Z$ as above.

\begin{lemma}
\label{nonvuoto}

Notation as in Setup \ref{set}. We have:
\begin{itemize}
\item[(i)] $Z \ne \emptyset$ if $c_{r-1}(\E) \ne 0$.
\end{itemize}
Moreover, if, in addition, $r \le n$ and $Z \ne \emptyset$, then $Z$ is smooth and irreducible if one of the following holds:
\begin{itemize}
\item[(ii)] $\E$ is $(n-r)$-ample, or
\item[(iii)] $H^i(\Lambda^{i+1} \E^*)=0$ for $1 \le i \le r-1$.
\end{itemize}
\end{lemma}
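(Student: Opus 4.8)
The plan is to handle the three items separately, using standard degeneracy-locus technology. For part (i), if $c_{r-1}(\E) \ne 0$ then the cohomology class $[Z] = c_{r-1}(\E)$ computed in Lemma \ref{nldg} is nonzero; but if $Z = \emptyset$ that class would vanish, a contradiction. (One must be slightly careful: Lemma \ref{nldg} asserts $[Z] = c_{r-1}(\E)$ only under the hypothesis $Z \ne \emptyset$; so instead I would argue directly that the top Chern class of the cokernel sheaf, or equivalently the expected-dimension class $c_{r-1}(\E)$, must be supported on $Z$, so $Z = \emptyset$ forces $c_{r-1}(\E) = 0$.)

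For parts (ii) and (iii), the smoothness of $Z$ is already contained in Lemma \ref{nldg} (granted $Z \ne \emptyset$, which we now have), so the only issue is irreducibility (equivalently, connectedness, since $Z$ is smooth). The natural tool is the Eagon–Northcott resolution \eqref{ea1} of $\I_{Z/X}$. Since $r - 1 \ge 1$, this resolution has length $r - 1 \le n - 1 < \dim X$, and $\codim(Z, X) = r - 1$ while $\dim Z = n + 1 - r \ge 1$. I would feed the resolution \eqref{ea1} into the hypercohomology spectral sequence to compute $H^0(\O_X) \to H^0(\O_Z)$, i.e.\ to show $H^1(\I_{Z/X}) = 0$, which gives $h^0(\O_Z) = 1$ and hence connectedness. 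Concretely, from \eqref{ea1} one gets that $H^1(\I_{Z/X})$ is controlled by the groups $H^{i}(F_i)$ for $1 \le i \le r-1$, i.e.\ by $H^i\big((\Lambda^{r-1-i}\E \otimes \O_X(-D))^{\oplus i}\big) = H^i(\Lambda^{r-1-i}\E(-D))^{\oplus i}$. Writing $j = r - 1 - i$, these are $H^{r-1-j}(\Lambda^j \E(-D))$ for $0 \le j \le r-2$; dualizing via Serre duality on $X$ and using $\det\E = \O_X(D)$ one identifies $\Lambda^j\E(-D) \cong \Lambda^j\E \otimes (\det\E)^{-1} \cong \Lambda^{r-j}\E^*$, so the relevant vanishing is $H^{r-1-j}(\Lambda^{r-j}\E^*) = 0$; reindexing by $i = r - 1 - j$ this is exactly $H^i(\Lambda^{i+1}\E^*) = 0$ for $1 \le i \le r-1$, which is hypothesis (iii). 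This settles (iii).

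For (ii), the hypothesis that $\E$ is $(n-r)$-ample should be used to \emph{deduce} the vanishing in (iii) (or a variant of it sufficient for connectedness). Recall $\E$ being $q$-ample means, roughly, that for every coherent sheaf $\F$ one has $H^i(\F \otimes S^m\E) = 0$ for $i > q$ and $m \gg 0$; a Le Potier–type vanishing / generalization of the Griffiths or Andreotti–Grauert vanishing theorems then gives $H^i(\Lambda^{j}\E^* \otimes K_X \otimes \det\E) = 0$ in an appropriate range when $\E$ is sufficiently positive. Here $\E$ is $(n-r)$-ample and has rank $r$; the partial-ampleness exactly matches the codimension $r-1$ and the fibre dimension of the degeneracy locus construction, so one expects $H^i(\Lambda^{i+1}\E^*) = 0$ for $1 \le i \le r-1$ to follow. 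The main obstacle I anticipate is precisely pinning down the correct vanishing statement for $q$-ample bundles in the twisted-exterior-power range needed: partial-ampleness behaves well for symmetric powers and Schur functors of positive bundles but the dual exterior powers $\Lambda^{i+1}\E^*$ require care, and one may instead argue connectedness more directly, e.g.\ by a Bertini–Fulton–Hansen type connectedness theorem for degeneracy loci of maps from ample vector bundles (in the spirit of the Fulton–Lazarsfeld connectedness results), applied to $\varphi^* : \E^* \to V^* \otimes \O_X$, using that a general $2$-dimensional $V \subset H^0(\E)$ makes the associated section of a suitable bundle on a projective bundle over $X$ sufficiently generic. Either route — deduce (iii) from $(n-r)$-ampleness via a Le Potier vanishing, or invoke a dedicated connectedness theorem for degeneracy loci — completes (ii).
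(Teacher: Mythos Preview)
Your proposal is correct and follows essentially the same route as the paper. For (i), the paper makes your parenthetical remark explicit: if $Z=\emptyset$ then $\varphi$ has constant rank $2$, so $\F:=\Coker\varphi$ is a vector bundle of rank $r-2$ and $c_{r-1}(\E)=c_{r-1}(\F)=0$. For (iii), your spectral-sequence argument with the Eagon--Northcott resolution and the identification $\Lambda^{r-1-i}\E(-D)\cong\Lambda^{i+1}\E^*$ is exactly what the paper does (citing \cite[Prop.~B.1.2(i)]{laz1}).

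For (ii), your second route is the right one and is what the paper does: it simply invokes Tu's connectedness theorem for degeneracy loci \cite[Thm.~6.4(a)]{t}, which says that if $\E$ is $(n-r)$-ample then $D_1(\varphi)$ is connected (the expected dimension $n+1-r$ exceeds the ampleness index $n-r$). Your first route---trying to deduce the vanishings of (iii) from $(n-r)$-ampleness via a Le Potier-type theorem---is, as you suspected, not straightforward: $q$-ampleness controls $H^i$ for $i>q$ after twisting by high symmetric powers, which does not directly yield the specific groups $H^i(\Lambda^{i+1}\E^*)$ for $1\le i\le r-1$ (note some of these indices satisfy $i\le n-r$). So drop that branch and cite the connectedness theorem directly.
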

\begin{proof}
If $Z = \emptyset$, then the morphism $\varphi$ has constant rank $2$ and therefore we get an exact sequence
$$0 \to V \otimes \O_X \to \E \to \F \to 0$$
where also $\F$ is a vector bundle of rank $r-2$. But then $c_{r-1}(\E)=c_{r-1}(\F)=0$, a contradiction. Therefore $Z \ne \emptyset$ and (i) is proved. To see that $Z$ is smooth and irreducible it is enough, by Lemma \ref{nldg}, to prove that $Z$ is connected. Now, under the hypothesis in (ii), the connectedness follows by \cite[Thm.~6.4(a)]{t}. Under the hypothesis in (iii), since $\Lambda^{i+1} \E^* \cong \Lambda^{r-1-i} \E \otimes \O_X(-D)$, we deduce by \eqref{ea1} and 
\cite[Prop.~B.1.2(i)]{laz1} that $H^1(\I_{Z/X})=0$, hence again $Z$ is connected. 
\end{proof}

\section{Generalities on Ulrich vector bundles}
\label{gen}

We will often use the following, mostly well-known, properties of Ulrich bundles.

\begin{lemma} 
\label{ulr}
Let $X \subseteq \P^N$ be a smooth irreducible variety of dimension $n$, degree $d$ and let $\E$ be a rank $r$ Ulrich bundle. We have:
\begin{itemize}
\item[(i)] $\E$ is globally generated. 
\item[(ii)] $\E^*(K_X+(n+1)H)$ is Ulrich.
\item[(iii)] $\E$ is aCM.
\item[(iv)] $c_1(\E) H^{n-1}=\frac{r}{2}[K_X+(n+1)H] H^{n-1}$.
\item [(v)] $\E_{|Y}$ is Ulrich on a smooth hyperplane section $Y$ of $X$.
\item[(vi)] $\det \E$ is globally generated and it is not trivial, unless $(X, H, \E) = (\P^n, \O_{\P^n}(1), \O_{\P^n}^{\oplus r})$.
\item[(vii)] If $n \ge 2$, then $c_2(\E) H^{n-2}=\frac{1}{2}[c_1(\E)^2-c_1(\E) K_X] H^{n-2}+\frac{r}{12}[K_X^2+c_2(X)-\frac{3n^2+5n+2}{2}H^2] H^{n-2}$.
\item[(viii)] $\chi(\E(m))= rd\binom{m+n}{n}$.
\item[(ix)] If $n=3$, then
$$c_3(\E)=2r(d-\chi(\O_X))+c_1(\E)c_2(\E)-\frac{1}{3}c_1(\E)^3+\frac{1}{2}K_X(c_1(\E)^2-2c_2(\E))-\frac{1}{6}(K_X^2+c_2(X))c_1(\E).$$
\item[(x)] If $n=4$, then
$$\begin{aligned}[t] 
c_4(\E) = & -6r(d-\chi(\O_X))-\frac{1}{4}K_Xc_2(X)c_1(\E)+\frac{1}{4}[K_X^2+c_2(X)][c_1(\E)^2-2c_2(\E)]\\
& -\frac{1}{2}K_X[c_1(\E)^3-3c_1(\E)c_2(\E)+3c_3(\E)]+\frac{1}{4}[c_1(\E)^4-4c_1(\E)^2c_2(\E)+4c_1(\E)c_3(\E)+2c_2(\E)^2]
\end{aligned}$$
\item[(xi)] If $n=5$, then
$$\begin{aligned}[t] 
c_5(\E) = & 24r(d-\chi(\O_X))-\frac{1}{5}c_1(\E)^5+c_1(\E)^3c_2(\E)-c_1(\E)^2c_3(\E)-c_1(\E)c_2(\E)^2
+c_1(\E)c_4(\E)\\
& +c_2(\E)c_3(\E)+\frac{1}{2}(c_1(\E)^2-2c_2(\E))c_2(X)K_X\\
& +\frac{1}{30}c_1(\E)(K_X^4-4K_X^2c_2(X)+K_Xc_3(X)-3c_2(X)^2+c_4(X))\\
& +\frac{1}{2}(c_1(\E)^4-4c_1(\E)^2c_2(\E)+4c_1(\E)c_3(\E)+2c_2(\E)^2-4c_4(\E))K_X\\
& -\frac{1}{3}(K_X^2+c_2(X))(c_1(\E)^3-3c_1(\E)c_2(\E)+3c_3(\E)).
\end{aligned}$$
\item[(xii)] If $n=6$, then
$$\begin{aligned}[t] 
& c_6(\E) = -120r(d-\chi(\O_X))-\frac{1}{12}c_1(\E)(-K_X^3c_2(X)+3K_Xc_2(X)^2-K_X^2c_3(X)-K_Xc_4(X))\\
& -\frac{1}{12}(K_X^4c_1(\E)^2-4K_X^2c_2(X)c_1(\E)^2-3c_2(X)^2c_1(\E)^2+K_Xc_3(X)c_1(\E)^2+c_4(X)c_1(\E)^2\\
& -2K_X^4c_2(\E)+8K_X^2c_2(X)c_2(\E)+6c_2(X)^2c_2(\E)-2K_Xc_3(X)c_2(\E)-2c_4(X)c_2(\E))\\
& -\frac{5}{6}K_Xc_2(X)(c_1(\E)^3-3c_1(\E)c_2(\E)+3c_3(\E))\\
& +\frac{5}{12}(K_X^2+c_2(X))(c_1(\E)^4-4c_1(\E)^2c_2(\E)+2c_2(\E)^2+4c_1(\E)c_3(\E)-4c_4(\E))\\
& -\frac{1}{2}K_X(c_1(\E)^5-5c_1(\E)^3c_2(\E)+5c_1(\E)c_2(\E)^2+5c_1(\E)^2c_3(\E)-5c_2(\E)c_3(\E)-5c_1(\E)c_4(\E)+5c_5(\E))\\
& +\frac{1}{6}c_1(\E)^6-c_1(\E)^4c_2(\E)+\frac{3}{2}c_1(\E)^2c_2(\E)^2-\frac{1}{3}c_2(\E)^3+c_1(\E)^3c_3(\E)-2c_1(\E)c_2(\E)c_3(\E)+\frac{1}{2}c_3(\E)^2\\
& -c_1(\E)^2c_4(\E)+c_2(\E)c_4(\E)+c_1(\E)c_5(\E).
\end{aligned}$$
\item[(xiii)] If $n=7$, then
$$\begin{aligned}[t] 
& c_7(\E)=720r(d-\chi(\O_X))\\
& +\frac{1}{2}K_X(c_1(\E)^6-6c_1(\E)^4 c_2(\E)+9c_1(\E)^2 c_2(\E)^2-2c_2(\E)^3+6c_1(\E)^3 c_3(\E)-12c_1(\E)c_2(\E)c_3(\E)\\
& \hskip 1.4cm +3c_3(\E)^2-6c_1(\E)^2 c_4(\E)+6c_2(\E)c_4(\E)+6c_1(\E)c_5(\E)-6c_6(\E))\\
& -\frac{1}{2}(K_X^2+c_2(X))(c_1(\E)^5-5c_1(\E)^3 c_2(\E)+5c_1(\E)c_2(\E)^2+5c_1(\E)^2 c_3(\E)-5c_2(\E)c_3(\E)\\
& \hskip 3.1cm -5c_1(\E)c_4(\E)+5c_5(\E))\\
& +\frac{5}{4}K_Xc_2(X)(c_1(\E)^4-4c_1(\E)^2 c_2(\E)+2c_2(\E)^2+4c_1(\E)c_3(\E)-4c_4(\E))\\
& +\frac{1}{6}(K_X^4 c_1(\E)^3-4K_X^2 c_2(X)c_1(\E)^3-3c_2(X)^2 c_1(\E)^3+K_Xc_3(X)c_1(\E)^3+c_4(X)c_1(\E)^3\\
& \hskip .8cm -3K_X^4c_1(\E)c_2(\E)+12K_X^2 c_2(X)c_1(\E)c_2(\E)+9c_2(X)^2 c_1(\E)c_2(\E)-3K_Xc_3(X)c_1(\E)c_2(\E)\\
& \hskip .8cm -3c_4(X)c_1(\E)c_2(\E)+3K_X^4 c_3(\E)-12K_X^2 c_2(X)c_3(\E)-9c_2(X)^2 c_3(\E)+3K_Xc_3(X)c_3(\E)\\
& \hskip .8cm +3c_4(X)c_3(\E))\\
& -\frac{1}{4}K_X(K_X^2 c_2(X)c_1(\E)^2-3c_2(X)^2 c_1(\E)^2+K_Xc_3(X)c_1(\E)^2+c_4(X)c_1(\E)^2-2K_X^2 c_2(X)c_2(\E)\\
& \hskip 1.4cm +6c_2(X)^2 c_2(\E)-2K_Xc_3(X)c_2(\E)-2c_4(X)c_2(\E))\\
& -\frac{1}{84}c_1(\E)(2K_X^6-12K_X^4 c_2(X)+11K_X^2 c_2(X)^2+10c_2(X)^3-5K_X^3 c_3(X)-11K_Xc_2(X)c_3(X)\\
& \hskip 1.8cm -c_3(X)^2-5K_X^2 c_4(X)-9c_2(X)c_4(X)+2K_Xc_5(X)+2c_6(X))\\
& -\frac{1}{7}c_1(\E)^7+c_1(\E)^5 c_2(\E)-2c_1(\E)^3 c_2(\E)^2+c_1(\E)c_2(\E)^3-c_1(\E)^4 c_3(\E)+3c_1(\E)^2 c_2(\E)c_3(\E)\\
& -c_2(\E)^2 c_3(\E)-c_1(\E)c_3(\E)^2+c_1(\E)^3 c_4(\E)-2c_1(\E)c_2(\E)c_4(\E)+c_3(\E)c_4(\E)-c_1(\E)^2 c_5(\E)+c_2(\E)c_5(\E)\\
& +c_1(\E)c_6(\E).
\end{aligned}$$
\end{itemize} 
\end{lemma}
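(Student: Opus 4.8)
The plan is to deduce all thirteen items from two inputs: the cohomological definition of an Ulrich bundle together with its standard reformulations, and Hirzebruch--Riemann--Roch on $X$ and on general linear sections of $X$.

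\smallskip
\emph{Items (i)--(iii) and (v).} The Ulrich vanishing gives $H^i(\E(-i))=0$ for every $i>0$ (take $p=i$), while $H^i(\E(m))=0$ for $i>n$ automatically, so $\E$ is $0$-regular as a sheaf on $\P^N$; by Castelnuovo--Mumford regularity it is globally generated, which is (i). For (ii), Serre duality on $X$ gives $H^i(\E(-p))^{\vee}\cong H^{n-i}(\E^*(K_X+pH))$, so the Ulrich vanishing range $1\le p\le n$ is invariant under $p\mapsto n+1-p$, which is precisely the assertion that $\E^*(K_X+(n+1)H)$ is Ulrich. For (iii), combine the $0$-regularity of $\E$ and of $\E^*(K_X+(n+1)H)$ (using (ii)) through Serre duality to get $H^i(\E(m))=0$ for $0<i<n$ and all $m$; alternatively invoke the standard linear-resolution description of Ulrich sheaves over $\P^N$. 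For (v), twist the restriction sequence $0\to\E(m-1)\to\E(m)\to\E_{|Y}(m)\to 0$: for $1\le p\le n-1$ both $H^i(\E(-p))$ and $H^{i+1}(\E(-p-1))$ vanish by hypothesis, hence $H^i(\E_{|Y}(-p))=0$; the general hyperplane section $Y$ is smooth by Bertini.

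\smallskip
\emph{Items (viii), (iv), (vii), (ix)--(xiii).} First I would establish (viii): $\chi(\E(m))$ is a degree-$n$ polynomial in $m$ with leading coefficient $\tfrac{rd}{n!}$, and the Ulrich vanishing forces $\chi(\E(-p))=0$ for $p=1,\dots,n$, so $\chi(\E(m))=\tfrac{rd}{n!}\prod_{p=1}^n(m+p)=rd\binom{m+n}{n}$. Then (iv), (vii) and (ix)--(xiii) all follow by equating this with the Hirzebruch--Riemann--Roch expansion $\chi(\E(m))=\int_X\ch(\E)\,e^{mH}\,\td(T_X)$ and comparing the coefficients of $m^{n-1}$, $m^{n-2}$ and $m^0$. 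The key point for (ix)--(xiii) is that $c_n(\E)$ enters $\chi(\E)$ only through $\int_X\ch_n(\E)=\tfrac{(-1)^{n-1}}{(n-1)!}\,c_n(\E)+(\text{terms in }c_1(\E),\dots,c_{n-1}(\E))$, so the single equation $\chi(\E)=rd$ can be solved for $c_n(\E)$, producing the leading term $(-1)^{n-1}(n-1)!\,r(d-\chi(\O_X))$ — the $-\chi(\O_X)$ coming from the $r\,\td_n(T_X)$ contribution — together with the displayed universal polynomial in the remaining Chern classes of $\E$ and of $X$; (iv) and (vii) may be substituted to simplify.

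\smallskip
\emph{Item (vi).} Global generation of $\det\E$ is immediate from (i). If $\det\E=\O_X$, then (iv) gives $(K_X+(n+1)H)H^{n-1}=0$; intersecting with $n-1$ general hyperplanes reduces this to a smooth curve $C$ with $2g(C)-2+2\deg_C H=0$, which forces $\deg_C H=1$ and $g(C)=0$. Hence $\deg X=H^n=1$, so $X$ is a linearly embedded $\P^n$ and $H=\O_{\P^n}(1)$. Then $\E$ is aCM by (iii), hence splits by Horrocks as $\bigoplus_i\O_{\P^n}(a_i)$, and the Ulrich conditions $H^0(\O(a_i-1))=0=H^n(\O(a_i-n))$ force every $a_i=0$, i.e. $\E\cong\O_{\P^n}^{\oplus r}$.

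\smallskip
\emph{Main obstacle.} The only laborious part is the Riemann--Roch bookkeeping behind (xii) and (xiii): one must expand $\ch(\E)\,e^{mH}\,\td(T_X)$ in degrees $6$ and $7$, isolate the constant-in-$m$ term, and solve for $c_n(\E)$; this is routine but long, and is where the appendix computations are used. A secondary point requiring care is the precise equivalence between the cohomological definition of Ulrich and the Hilbert-polynomial statement invoked in (viii), together with the smoothness/Bertini hypotheses implicit in (v) and in the reduction to a linearly embedded $\P^n$ in (vi).
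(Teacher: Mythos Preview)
Your proposal is correct and matches the paper's approach: the paper simply cites \cite[Lemma 3.2]{lr1} for (i)--(v) and (vii)--(viii), \cite[Lemma 2.1]{lo} for (vi), \cite[Prop.~3.7(b)]{bmpt} for (ix), and derives (x)--(xiii) from Riemann--Roch together with $\chi(\E)=rd$ --- precisely the standard arguments you have spelled out. Your sketch for (vi) via (iv), the curve section, and Horrocks is the expected one, and your identification of the leading term $(-1)^{n-1}(n-1)!\,r(d-\chi(\O_X))$ in (ix)--(xiii) is on the mark.
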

\begin{proof}
See for example \cite[Lemma 3.2]{lr1} for (i)-(v) and (vii)-(viii), \cite[Lemma 2.1]{lo} for (vi) and \cite[Prop.~3.7(b)]{bmpt} for (ix). The formulas (x)-(xiii) follow by using Riemann-Roch on $X$ and $\chi(\E)=rd$ by (vii) (see \cite[Out(17), Out(19), Out(21), Out(23)]{mcc}).
\end{proof}

We have the following easy vanishings for powers of Ulrich bundles.

\begin{lemma}
\label{nuv}

Let $X \subset \P^N$ be a smooth irreducible variety of dimension $n$ and let $\E$ be a rank $r$ Ulrich bundle. Then:
\begin{itemize}
\item[(i)] $H^n(\E^{\otimes j}(l))=0$ for $j \ge 1$ and $l \ge -n$.
\item[(ii)] $H^{n-1}(\E^{\otimes j}(l))=0$ for $j \ge 1$ and $l \ge 1-n$.
\item[(iii)] $H^n((\Lambda^j \E)(l))=0$ for $j \ge 1, l \ge -n$ and $H^{n-1}((\Lambda^j \E)(l))=0$ for $j \ge 1, l \ge 1-n$.
\end{itemize} 
\end{lemma}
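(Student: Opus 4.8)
The plan is to deduce all three vanishings from the aCM property of $\E$ (Lemma \ref{ulr}(iii)), combined with Serre duality and the behavior of Ulrich bundles under hyperplane sections (Lemma \ref{ulr}(v)), and the fact that tensor and exterior powers of aCM bundles need not be aCM but their top cohomology is still controlled.

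\medskip

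First I would establish (i). For $j=1$ this is essentially the definition of an Ulrich bundle together with $\E$ being aCM: since $\E$ is Ulrich, $H^i(\E(-p))=0$ for all $i$ and $1\le p\le n$, and aCM gives $H^i(\E(l))=0$ for $0<i<n$ and all $l$; by Serre duality $H^n(\E(l))=H^0(\E^*(K_X+(N+?)\ldots))$— but it is cleaner to argue directly. For general $j\ge 1$ the point is that $\E^{\otimes j}$ is globally generated (Lemma \ref{ulr}(i) and tensoring globally generated bundles), hence a quotient of a trivial bundle $\O_X^{\oplus M}\twoheadrightarrow \E^{\otimes j}$; writing $0\to \G\to \O_X^{\oplus M}\to \E^{\otimes j}\to 0$ and twisting by $\O_X(l)$ with $l\ge -n$, we get $H^n(\E^{\otimes j}(l))$ surjected onto by $H^n(\O_X(l)^{\oplus M})$... this is the wrong direction. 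Instead, the right move is induction on $j$: write $\E^{\otimes j}=\E^{\otimes(j-1)}\otimes\E$ and use that $\E$ is a quotient of $\O_X^{\oplus r}$, so $\E^{\otimes j}$ is a quotient of $(\E^{\otimes(j-1)})^{\oplus r}$; the kernel $\G_{j}$ fits in $0\to \G_j\to (\E^{\otimes(j-1)})^{\oplus r}\to \E^{\otimes j}\to 0$, giving $H^n((\E^{\otimes(j-1)})^{\oplus r}(l))\to H^n(\E^{\otimes j}(l))\to H^{n+1}(\G_j(l))=0$ since $\dim X=n$. By induction $H^n((\E^{\otimes(j-1)})(l))=0$ for $l\ge -n$, so $H^n(\E^{\otimes j}(l))=0$. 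The base case $j=1$: $H^n(\E(l))=0$ for $l\ge -n$ follows because $\E$ is aCM, so its minimal free resolution over the coordinate ring has the expected shape, equivalently $H^n_*(\E)$ matches $H^n_*$ of a direct sum of line bundles $\O_X(a_i)$ with $a_i\ge$ (something); more simply, by Serre duality $H^n(\E(l))=H^0(\E^*\otimes\O_X(K_X)\otimes\O_X(-l))^\vee$, and using Lemma \ref{ulr}(ii) that $\E^*(K_X+(n+1)H)$ is Ulrich, hence has no sections twisted down, one gets the vanishing precisely for $l\ge -n$; I would spell this out carefully since the numerology is where an off-by-one error can creep in.

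\medskip

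For (ii) I would use hyperplane sections. By Lemma \ref{ulr}(v), restricting to a general hyperplane section $Y=X_{n-1}$ (smooth of dimension $n-1$), $\E_{|Y}$ is Ulrich on $Y$, and $(\E^{\otimes j})_{|Y}=(\E_{|Y})^{\otimes j}$. From the standard exact sequence $0\to \E^{\otimes j}(l-1)\to \E^{\otimes j}(l)\to (\E_{|Y})^{\otimes j}(l)\to 0$ we get $H^{n-1}(\E^{\otimes j}(l-1))\to H^{n-1}(\E^{\otimes j}(l))\to H^{n-1}((\E_{|Y})^{\otimes j}(l))$. By part (i) applied on $Y$ (induction on dimension, base case being curves or surfaces where it is direct), $H^{n-1}((\E_{|Y})^{\otimes j}(l))=0$ for $l\ge -(n-1)$, i.e. $l\ge 1-n$. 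Meanwhile $H^{n-1}(\E^{\otimes j}(l-1))$ for $l-1$ large is zero by Serre vanishing, and one descends: for $l\ge 1-n$, $H^{n-1}(\E^{\otimes j}(l))$ injects (after the $Y$-term vanishes) into $H^{n-1}(\E^{\otimes j}(l+1))$, and iterating upward reaches the Serre-vanishing range, forcing $H^{n-1}(\E^{\otimes j}(l))=0$. I would need to check the induction on $n$ is set up so that the hypersurface/subvariety $Y$ still satisfies the hypotheses (smooth, irreducible, in projective space), which it does.

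\medskip

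For (iii) the same two arguments apply verbatim with $\E^{\otimes j}$ replaced by $\Lambda^j\E$: it is still globally generated (a quotient of $\O_X^{\oplus\binom{r}{j}}$, equivalently a quotient of $(\Lambda^{j-1}\E)^{\oplus r}$ via $\Lambda^{j-1}\E\otimes\E\to\Lambda^j\E$), so the induction-on-$j$ argument for the top cohomology goes through using $H^{n+1}=0$, and $(\Lambda^j\E)_{|Y}=\Lambda^j(\E_{|Y})$ lets the hyperplane-section argument for $H^{n-1}$ run identically, invoking the $H^n$ statement on $Y$. The main obstacle — really the only delicate point — is pinning down the exact numerical bounds ($l\ge -n$ versus $l\ge 1-n$) in the base cases and making sure the "climb up to Serre vanishing" step is legitimate, i.e. that the connecting/restriction maps vanish in the claimed range; everything else is a formal dévissage using global generation, the long exact sequences, and $H^{>n}(X,-)=0$.
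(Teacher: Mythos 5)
Your part (i) is essentially sound and close in spirit to the paper's argument (the paper also inducts on $j$, using the linear presentation $\O_{\P^N}(-1)^{\oplus\beta_1}\to\O_{\P^N}^{\oplus\beta_0}\to\E\to 0$ of Eisenbud--Schreyer instead of bare global generation, and settles $j=1$ by Castelnuovo--Mumford, $\E$ being $0$-regular, which gives both $H^n(\E(l))=0$ for $l\ge -n$ and $H^{n-1}(\E(l))=0$ for $l\ge 1-n$ at once; your Serre-duality route via Lemma \ref{ulr}(ii) also works, and the minor slip that a globally generated rank $r$ bundle is a quotient of $\O_X^{\oplus h^0(\E)}$ rather than of $\O_X^{\oplus r}$ is harmless). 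The genuine gap is in (ii). From $0\to\E^{\otimes j}(l)\to\E^{\otimes j}(l+1)\to(\E_{|Y})^{\otimes j}(l+1)\to 0$, the map $H^{n-1}(\E^{\otimes j}(l))\to H^{n-1}(\E^{\otimes j}(l+1))$ is injective only when the image of $H^{n-2}((\E_{|Y})^{\otimes j}(l+1))$ vanishes; the vanishing you actually invoke, $H^{n-1}((\E_{|Y})^{\otimes j}(\cdot))=0$ from part (i) on $Y$ (the top cohomology of $Y$), sits on the other side of the long exact sequence and only yields surjectivity of $H^{n-1}(\E^{\otimes j}(l-1))\to H^{n-1}(\E^{\otimes j}(l))$. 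A chain of surjections going up, combined with Serre vanishing at the top, propagates nothing downward, so the ``climb to the Serre-vanishing range'' does not close. To get the needed injections you would have to invoke statement (ii) itself on $Y$ (vanishing of $H^{n-2}$ of the restriction), i.e. set up an induction on dimension; but that induction has no valid base, since for $n=2$ the hyperplane section is a curve and the required input $H^0((\E_{|Y})^{\otimes j}(l))=0$ for $l\ge 0$ is false for a globally generated bundle. So the surface case, at least, needs a different argument, and none is supplied.

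The paper avoids dimension induction entirely: for $j\ge 2$ it tensors the linear presentation above by $\E^{\otimes(j-1)}(l)$, obtaining a four-term exact sequence whose twists drop by only one, so that $H^{n-1}(\E^{\otimes j}(l))=0$ follows from the inductive hypothesis in degree $n-1$ at twist $l$ together with part (i) in degree $n$ at twist $l-1$, via the standard chase of the spliced sequence (\cite[Prop.~B.1.2(i)]{laz1}); the case $d=1$ ($X=\P^n$, $\E$ trivial) is treated separately. Your treatment of (iii) for the top cohomology is fine, but its $H^{n-1}$ half inherits the same gap; note also that in characteristic zero $\Lambda^j\E$ is a direct summand of $\E^{\otimes j}$, which is how the paper deduces (iii) from (i) and (ii) with no extra work.
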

\begin{proof}
It is well-known that (iii) follows by (i) and (ii), which we now prove.
If $d=1$ we have that $(X, H, \E) = (\P^n, \O_{\P^n}(1), \O_{\P^n}^{\oplus r})$ by \cite[Prop.~2.1]{es}, hence (i) and (ii) follow. If $d \ge 2$ we prove (i) and (ii) by induction on $j$. The case $j=1$ follows by Castelnuovo-Mumford since $\E$ is $0$-regular. Suppose $j \ge 2$. We have, by \cite[Prop.~2.1]{es}, an exact sequence
$$\O_{\P^N}(-1)^{\oplus \beta_1} \to \O_{\P^N}^{\oplus \beta_0} \to \E \to 0$$
hence, tensoring by $\E^{\otimes (j-1)}(l)$ we get an exact sequence on $X$,
\begin{equation}
\label{su}
0 \to \G \to (\E^{\otimes (j-1)}(l-1))^{\oplus \beta_1} \mapright{\psi} (\E^{\otimes (j-1)}(l))^{\oplus \beta_0} \to \E^{\otimes j}(l) \to 0
\end{equation}
where $\G = \Ker \psi$. If $l \ge -n$, we have that $H^n((\E^{\otimes (j-1)}(l)))=0$ by the inductive hypothesis, hence $H^n(\E^{\otimes j}(l))=0$ by \eqref{su}. This proves (i). If $l \ge 1-n$, we have that $H^{n-1}((\E^{\otimes (j-1)}(l)))=0$ by the inductive hypothesis and $H^n((\E^{\otimes (j-1)}(l-1)))=0$ by (i). Then (ii) follows by \eqref{su} and \cite[Prop.~B.1.2(i)]{laz1}. 
\end{proof}

We will now give some conditions under which $Z$ is connected.

\begin{lemma}
\label{duecasi}

Let $X \subset \P^N$ be a smooth irreducible variety of dimension $n$, degree $d \ge 3$ and let $\E$ be a rank $r$ Ulrich bundle. Suppose that $X$ is subcanonical and $\det \E = \O_X(u)$, for some $u \in \Z$. Assume that one of the following conditions is satisfied:
\begin{itemize}
\item[(a)] $4 \le n \le 7$ and $r=4$, or
\item[(b)] $6 \le n \le 9$ and $r=5$.
\end{itemize}
Let $Z$ be as in Setup \ref{set} and assume that $Z \ne \emptyset$. Then $Z$ is smooth and irreducible.
\end{lemma}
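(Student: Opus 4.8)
The plan is to verify condition (iii) of Lemma \ref{nonvuoto}. In both cases we have $r\le n$ (in (a), $n\ge 4=r$; in (b), $n\ge 6>5=r$), and $(X,H,\E)$ fits Setup \ref{set} with $D=uH$, since $\tfrac{n+1}{2}\le r\le n$ and $\E$ is globally generated by Lemma \ref{ulr}(i). As $Z\ne\emptyset$ is assumed, it therefore suffices, by Lemma \ref{nonvuoto}, to prove $H^i(\Lambda^{i+1}\E^*)=0$ for $1\le i\le r-1$; equivalently, using $\Lambda^{i+1}\E^*\cong\Lambda^{r-1-i}\E(-u)$ (as in \eqref{ea1}),
\begin{equation*}
H^i\big(\Lambda^{r-1-i}\E(-u)\big)=0,\qquad 1\le i\le r-1.
\end{equation*}
Write $K_X=-i_XH$ (subcanonicity). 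By Lemma \ref{ulr}(iv), $u=\tfrac r2(n+1-i_X)$; and since $\det\E=\O_X(u)$ is globally generated and non-trivial by Lemma \ref{ulr}(vi) --- non-trivial because $d\ge 3$ excludes $(X,H,\E)=(\P^n,\O_{\P^n}(1),\O_{\P^n}^{\oplus r})$ --- we get $u\ge 1$, whence $n+1-i_X\ge 1$; moreover $i_X=n$ would force $-K_X=nH$, so $X$ a quadric by the Kobayashi--Ochiai theorem, contradicting $d\ge 3$. Thus $i_X\le n-1$ and $u\ge r$.

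Now the displayed vanishings. For $i=r-1$ we need $H^{r-1}(\O_X(-u))=0$; by Serre duality this is dual to $H^{n-r+1}(K_X+uH)$, which is zero by Kodaira vanishing since $uH$ is ample and $n-r+1\ge 1$. For $i=r-2$ we need $H^{r-2}(\E(-u))=0$, which holds because $\E$ is aCM (Lemma \ref{ulr}(iii)) and $0<r-2<n$. For $i=1$ we need $H^1(\Lambda^{r-2}\E(-u))=0$; since $\Lambda^{r-2}\E(-u)\cong\Lambda^2\E^*$, Serre duality makes this dual to $H^{n-1}(\Lambda^2\E(-i_X))$, which vanishes by Lemma \ref{nuv}(iii) as $-i_X\ge 1-n$. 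In case (a) these are exactly the cases $i=1,2,3$, so the proof is complete.

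In case (b) there remains the case $i=2$, i.e.\ $H^2(\Lambda^2\E(-u))=0$; by Serre duality (and $\Lambda^2\E^*\cong\Lambda^3\E(-u)$) this is equivalent to $H^{n-2}(\Lambda^3\E(-i_X))=0$. Restricting to a general hyperplane section $Y\in|H|$ --- on which $\E_{|Y}$ is Ulrich by Lemma \ref{ulr}(v) and which is subcanonical of index $i_X-1$ --- and feeding the sequences
\begin{equation*}
0\to\Lambda^3\E(-i_X-k-1)\to\Lambda^3\E(-i_X-k)\to\Lambda^3(\E_{|Y})(-i_X-k)\to 0
\end{equation*}
into Lemma \ref{nuv}(iii) applied on $Y$ (of dimension $n-1$, so that $H^{n-2}(\Lambda^3(\E_{|Y})(l))=0$ as soon as $l\ge 2-n$), one reduces the required vanishing to the single statement $H^{n-2}(\Lambda^3\E(1-n))=0$. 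This last cohomology group is genuinely of middle degree and is not reached by the formal manipulations above: it is here that one invokes the Hirzebruch--Riemann--Roch computation of the appendix, computing $\chi(\Lambda^3\E(1-n))$ from the Chern-class formulas of Lemma \ref{ulr} and the numerical relations above and combining it with $H^n(\Lambda^3\E(1-n))=H^{n-1}(\Lambda^3\E(1-n))=0$ (Lemma \ref{nuv}(iii)) and with the vanishing of the remaining cohomology of $\Lambda^3\E(1-n)$, the bounds $6\le n\le 9$ being used to keep all intervening cohomological degrees within the ranges where the earlier vanishings apply. The main obstacle is precisely this: the Ulrich hypothesis together with Lemma \ref{nuv} controls only the top two cohomology groups of exterior powers of $\E$, whereas $H^2(\Lambda^2\E(-u))$ lies one step deeper, which is what forces both the explicit computations of the appendix and the restriction on $n$.
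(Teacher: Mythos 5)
Your treatment of the cases $i=r-1$ (Kodaira), $i=r-2$ (aCM) and $i=1$ (Serre duality plus Lemma \ref{nuv}(iii)) matches the paper, and so case (a) is complete. The problem is case (b), $i=2$. Your reduction via hyperplane sections to the single vanishing $H^{n-2}(\Lambda^3\E(1-n))=0$ is legitimate (the restriction $\E_{|Y}$ is Ulrich and Lemma \ref{nuv} on $Y$ gives the surjections you need), but at that point you do not prove anything: you appeal to ``the Hirzebruch--Riemann--Roch computation of the appendix'' together with ``the vanishing of the remaining cohomology of $\Lambda^3\E(1-n)$''. This is a genuine gap, for several reasons. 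An Euler characteristic can never by itself force a single cohomology group to vanish; you would need to know all the other groups $H^i(\Lambda^3\E(1-n))$ for $0\le i\le n-3$, and none of the results you cite control intermediate cohomology of \emph{exterior powers} of an Ulrich bundle --- Lemma \ref{nuv} only reaches the top two degrees, and aCM-ness applies to $\E$ itself, not to $\Lambda^3\E$. Moreover the appendix formulas are Riemann--Roch computations for hypersurfaces of dimension $6$ and $8$ in $\P^7$ and $\P^9$ with prescribed Chern classes, not for the general subcanonical $X\subset\P^N$ of the lemma, and even in those cases they produce a nonzero polynomial in $d$, not zero. So the step you yourself single out as ``the main obstacle'' is exactly the step left unproved.

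For comparison, the paper closes this case by a different mechanism: it introduces the dual Ulrich bundle $\F=\E^*(n+1-i_X)=\E^*(\tfrac{2u}{5})$, which is globally generated by Lemma \ref{ulr}(i)--(ii), rewrites
\begin{equation*}
H^2(\Lambda^3\E^*)=H^2\bigl((\Lambda^2\E)(-u)\bigr)=H^{n-2}\bigl(\omega_X\otimes\Lambda^2\F\otimes\L\bigr),\qquad \L=\O_X(\tfrac{u}{5})\ \text{ample},
\end{equation*}
and then invokes the vanishing theorem for exterior powers of globally generated bundles twisted by $\omega_X$ and an ample line bundle \cite[Ex.~7.3.17]{laz2}; the hypothesis $n\ge 6$ in (b) is exactly what puts the degree $n-2$ above the threshold allowed by that theorem (while $n\le 9$ is only needed to fit Setup \ref{set}). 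If you want to salvage your route, you would have to supply an argument of this positivity type for $H^{n-2}(\Lambda^3\E(1-n))$ anyway; the Riemann--Roch data cannot replace it.
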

\begin{proof}
Set $K_X=-i_XH$, so that we know, by Lemma \ref{ulr}(iv) and (vi), that $u=\frac{r(n+1-i_X)}{2}>0$. Note that $-i_X \ge 1-n$, for otherwise $i_X \ge n$, hence $X$ is Fano and, as is well-known, this gives $d \le 2$, a contradiction. Also, note that $r$ and $n$ satisfy the conditions in Setup \ref{set}, hence $Z$ is smooth by Lemma \ref{nldg}. The plan is now to apply Lemma \ref{nonvuoto}(iii), hence to show that
\begin{equation}
\label{vala}
H^i(\Lambda^{i+1} \E^*)=0, \ \hbox{for} \ 1 \le i \le r-1.
\end{equation}
For $i=r-1$, we have that $H^{r-1}(\Lambda^r \E^*)=H^{r-1}(-uH)=0$ by Kodaira vanishing. For $i=r-2$, since $\Lambda^{r-1} \E^* \cong \E(-u)$, we have that $H^{r-2}(\Lambda^{r-1} \E^*)=H^{r-2}(\E(-u))=0$ by Lemma \ref{ulr}(iii). For $i=1$, by Serre's duality we have that $H^1(\Lambda^2 \E^*) \cong H^{n-1}((\Lambda^2 \E)(-i_X))=0$ by Lemma \ref{nuv}(iii). Thus, we are done in case (a), and, in case (b), to finish the proof of \eqref{vala}, it remains to consider the case $i=2$. Consider, as in Lemma \ref{ulr}(ii), the dual Ulrich bundle $\F=\E^*(n+1-i_X)=\E^*(\frac{2u}{5})$.
Since $\Lambda^3 \E^* \cong (\Lambda^2 \E)(-u)$, we have, using Serre's duality,
$$H^2(\Lambda^3 \E^*)=H^2((\Lambda^2 \E)(-u))=H^2((\Lambda^2 \F^*)(\frac{4u}{5}-u))=H^{n-2}(\omega_X \otimes \Lambda^2 \F \otimes \L)$$
where $\L=\O_X(\frac{u}{5})$ is ample. Therefore $H^{n-2}(\omega_X \otimes \Lambda^2 \F \otimes \L)=0$ by \cite[Ex.~7.3.17]{laz2} and the conditions in (b) since $\F$ is globally generated by Lemma \ref{ulr}(i). This proves case (b).
\end{proof}

In the case of a hypersurface, the following result guarantees that $Z$ is connected.

\begin{lemma} 
\label{ipe}

Let $X \subset \P^{n+1}$ be a general smooth hypersurface of degree $d \ge 2$ and let $\E$ be a rank $r$ Ulrich bundle. Let $n, r, Z$ be as in Setup \ref{set} and assume that $Z \ne \emptyset$. If 
\begin{equation}
\label{dgr}
\binom{d+n+1-r}{n+1-r} \ge r(n+2-r)+1
\end{equation}
then $Z$ is smooth and irreducible.
\end{lemma}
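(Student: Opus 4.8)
The plan is to follow the cohomological strategy already used in Lemma~\ref{nonvuoto}(iii) and Lemma~\ref{duecasi}, extracting the vanishings needed for connectedness of $Z$ from the generality of $X$. First observe that \eqref{dgr} forces $r\le n$ (for $r=n+1$ its left-hand side is $1$ while the right-hand side is $n+2$) and $d\ge 3$ (for $d=2$ and $\frac{n+1}{2}\le r\le n$ one checks the inequality fails), so $X$ is subcanonical with $K_X=(d-n-2)H$, i.e.\ $i_X=n+2-d$ and $-i_X\ge 1-n$; moreover, since $n\ge 3$, $\operatorname{Pic}(X)=\Z\cdot H$ (Grothendieck--Lefschetz), so $\det\E=\O_X(u)$ with $u=\frac{r(d-1)}{2}>0$ by Lemma~\ref{ulr}(iv),(vi). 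By Lemma~\ref{nldg}, $Z$ is smooth of pure dimension $n+1-r\ge 1$, so it suffices to prove $Z$ connected; as $H^1(\O_X)=0$ for $n\ge 2$, the sequence $0\to\I_{Z/X}\to\O_X\to\O_Z\to 0$ reduces this to $H^1(\I_{Z/X})=0$.

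Feeding the Eagon--Northcott resolution \eqref{ea1} into this, and using $F_i\cong(\Lambda^{i+1}\E^*)^{\oplus i}$ (since $\Lambda^{r-1-i}\E\otimes\O_X(-D)\cong\Lambda^{i+1}\E^*$), a chase through \eqref{ea1} as in \cite[Prop.~B.1.2(i)]{laz1} shows $H^1(\I_{Z/X})=0$ once $H^i(\Lambda^{i+1}\E^*)=0$ for $1\le i\le r-1$, which is condition \eqref{vala}. The ``extreme'' values of $i$ are handled exactly as in the proof of Lemma~\ref{duecasi}: for $i=r-1$, $\Lambda^r\E^*=\O_X(-u)$ and $H^{r-1}(\O_X(-u))=0$ by Kodaira vanishing; for $i=r-2$, $\Lambda^{r-1}\E^*\cong\E(-u)$ and $H^{r-2}(\E(-u))=0$ since $\E$ is aCM (Lemma~\ref{ulr}(iii)); for $i=1$, Serre duality gives $H^1(\Lambda^2\E^*)\cong H^{n-1}((\Lambda^2\E)(-i_X))$, which vanishes by Lemma~\ref{nuv}(iii) because $-i_X\ge 1-n$. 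Thus the proof is already complete when $r\le 4$.

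What remains --- and is, I expect, the main obstacle --- is the vanishing of the intermediate groups $H^i(\Lambda^{i+1}\E^*)$ for $2\le i\le r-3$ (equivalently, by Serre duality and $\Lambda^{i+1}\E^*\cong\Lambda^{r-1-i}\E\otimes\O_X(-u)$, of $H^{n-i}((\Lambda^{i+1}\E)(-i_X))$), which are not covered by Lemma~\ref{nuv}; this is exactly where the generality of $X$ and the inequality \eqref{dgr} must enter. The approach I would take is to cut $X$ by $n-r$ general hyperplanes down to a general hypersurface $X'\subset\P^{r+1}$ of dimension $r$ --- on which $\E_{|X'}$ is again Ulrich by Lemma~\ref{ulr}(v), $Z\cap X'=D_1(\varphi_{|X'})$ is a smooth curve carrying the resolution \eqref{ea1}, and $Z$ is irreducible iff this curve is connected (as $Z$ is smooth) --- and to observe that the number $\binom{d+n+1-r}{n+1-r}=h^0(\P^{n+1-r},\O(d))$ governs how many sections or hypersurfaces are available on such a linear section, to be weighed against $r(n+2-r)=\dim\operatorname{Gr}(r,n+2)$, the dimension of the Grassmannian through which the globally generated bundle $\E$ classifies and hence the ``expected'' number of conditions for the degeneracy locus to disconnect. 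Making this precise --- presumably via a Bott- or Le~Potier-type vanishing for the globally generated bundles $\Lambda^{i+1}\E$ on the general hypersurface, combined with a Kleiman-transversality argument guaranteeing the generic behaviour on $X$ --- is the crux of the proof; granting it, the resolution yields $H^1(\I_{Z/X})=0$, and $Z$, being smooth and connected, is irreducible.
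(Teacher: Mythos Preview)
Your approach diverges from the paper's in a crucial way, and the part you flag as ``the main obstacle'' is in fact a genuine gap: you do not prove the intermediate vanishings $H^i(\Lambda^{i+1}\E^*)=0$ for $2\le i\le r-3$, and the speculative Bott/Le~Potier argument you sketch is neither what the paper does nor something that is easy to carry out in this generality.

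The paper's proof uses Lemma~\ref{nonvuoto}(ii), not (iii). The numerical condition \eqref{dgr} has a concrete geometric meaning that you come close to but do not pin down: the Grassmannian of $(n+1-r)$-planes in $\P^{n+1}$ has dimension $r(n+2-r)$, while containing a fixed such plane imposes $h^0(\P^{n+1-r},\O(d))=\binom{d+n+1-r}{n+1-r}$ conditions on hypersurfaces of degree $d$. Thus \eqref{dgr} is exactly the inequality guaranteeing that a \emph{general} $X$ contains no linear subspace of dimension $n+1-r$. The key external input is then \cite[Thm.~1]{lr2}: for an Ulrich bundle on a hypersurface containing no $(n+1-r)$-plane, $\E$ is $(n-r)$-ample. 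With $(n-r)$-ampleness in hand, Lemma~\ref{nonvuoto}(ii) (which invokes Tu's connectedness theorem \cite[Thm.~6.4(a)]{t}) gives the connectedness of $Z$ directly, bypassing all the cohomological vanishings you are trying to establish. So the route through condition~(iii) and \eqref{vala} is not the intended one here; the generality of $X$ enters geometrically (absence of linear subspaces $\Rightarrow$ partial ampleness), not through special vanishing theorems on general hypersurfaces.
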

\begin{proof}
Since $X$ is general of degree $d$ as above, $X$ does not contain linear subspaces of dimension $n+1-r$. Hence $\E$ is $(n-r)$-ample by \cite[Thm.~1]{lr2} and therefore $Z$ is smooth and irreducible by Lemma \ref{nonvuoto}(ii).
\end{proof}

\begin{remark}
\label{ipe2}
For later purposes we note that \eqref{dgr} holds for $n=8$ when $r=6, d \ge 4$ or when $r=7, d \ge 6$.
\end{remark}

\section{Invariants of hypersurfaces and their Ulrich bundles}
\label{inv}

In this section we will collect some invariants of hypersurfaces that will be used to prove our main theorem.

We start with a well-known, very useful, fact.

\begin{prop}
\label{lef}
Let $X \subset \P^{n+1}$ be a smooth irreducible hypersurface of dimension $n \ge 2$, degree $d$ with hyperplane section $H$. For $0 \le i \le n$ we have:
\begin{itemize}
\item[(i)] $H^{2i}(X,\Z) \cong \Z H^i$ for $i < \frac{n}{2}$.
\item[(ii)] $H^{2i}(X,\Z) \cong \Z \frac{1}{d}H^i$ for $i > \frac{n}{2}$.
\item[(iii)] If $n$ is even, $n \ge 3$, $X$ is very general and $d \ge 3$, then any algebraic class in $H^n(X,\Z)$ is of type $aH^{\frac{n}{2}}$ for some $a \in \Z$.
\end{itemize} 
\end{prop}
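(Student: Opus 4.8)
The plan is: (i) and (ii) are formal consequences of the Lefschetz hyperplane theorem and Poincar\'e duality, while (iii) is a Noether--Lefschetz-type statement in the middle dimension, which I would prove by combining the countability of the Hodge locus with Griffiths' description of the infinitesimal variation of Hodge structure.

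For (i), the Lefschetz hyperplane theorem gives that $H^k(\P^{n+1},\Z)\to H^k(X,\Z)$ is an isomorphism for $k<n$, and since $H^{2i}(\P^{n+1},\Z)=\Z h^i$ with $h$ restricting to $H$, we obtain $H^{2i}(X,\Z)=\Z H^i$ whenever $2i<n$. For (ii), recall that $H^k(X,\Z)$ is torsion free for all $k$ (standard for hypersurfaces, e.g. by induction on hyperplane sections), so Poincar\'e duality yields a unimodular pairing $H^{2i}(X,\Z)\times H^{2n-2i}(X,\Z)\to\Z$; when $i>\frac n2$ we have $2n-2i<n$, hence $H^{2n-2i}(X,\Z)=\Z H^{n-i}$ by (i), and therefore $H^{2i}(X,\Z)$ is infinite cyclic, say generated by $\eta$. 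Unimodularity forces $\eta\cdot H^{n-i}=\pm1$; after replacing $\eta$ by $-\eta$ if needed, $H^i=(H^i\cdot H^{n-i})\,\eta=(H^n)\,\eta=d\,\eta$, i.e. $H^{2i}(X,\Z)=\Z\,\frac1d H^i$.

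For (iii), the class of an algebraic cycle is an integral Hodge class, so it suffices to show $H^n(X,\Z)\cap H^{\frac n2,\frac n2}(X)=\Z H^{\frac n2}$ for very general $X$. I would first dispose of the integral refinement: the divisibility of $H^{n/2}$ in $H^n(X,\Z)$ is a topological invariant of $(X,\O_X(1))$, hence the same for all smooth hypersurfaces of given degree $d$ and dimension $n$ (the parameter space being connected), and it equals $1$ since there exists such a hypersurface containing a linear $\P^{n/2}$, whose class pairs to $1$ with $H^{n/2}$. Thus $\Q H^{n/2}\cap H^n(X,\Z)=\Z H^{n/2}$, and it remains to prove $H^n(X,\Q)\cap H^{\frac n2,\frac n2}(X)=\Q H^{\frac n2}$ for very general $X$. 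To this end, over the parameter space $U$ of smooth degree-$d$ hypersurfaces, the locus of $[X]\in U$ carrying a nonzero primitive rational Hodge class in $H^n$ is a countable union $\bigcup_j\Sigma_j$ of closed algebraic subvarieties (Cattani--Deligne--Kaplan), so I only need each $\Sigma_j\subsetneq U$. Picking a general point $[f]\in\Sigma_j$ with corresponding nonzero primitive Hodge class $\lambda$, Griffiths' isomorphisms identify $T_{[f]}U\cong R_d$, $H^{\frac n2,\frac n2}_{\mathrm{prim}}\cong R_N$ and $H^{\frac n2-1,\frac n2+1}_{\mathrm{prim}}\cong R_{N+d}$ for the Jacobian ring $R=R_f$ and $N=(\frac n2+1)d-(n+2)$, the infinitesimal Hodge condition being multiplication; hence the tangent space of $\Sigma_j$ at $[f]$ is contained in the annihilator of $\lambda$ under $R_d\to R_{N+d}$. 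Since $R$ is a Gorenstein Artinian ring with socle degree $\sigma=(n+2)(d-2)$, and $\sigma-N-d=\frac n2 d-(n+2)\ge0$ exactly because $n\ge4$ is even and $d\ge3$, Macaulay's duality (the pairing $R_a\times R_{\sigma-a}\to R_\sigma\cong\C$ is perfect, and $R_a\cdot R_b=R_{a+b}$ for $a+b\le\sigma$) shows that a nonzero $\lambda\in R_N$ cannot be killed by all of $R_d$. Therefore the tangent space of $\Sigma_j$ at $[f]$ is a proper subspace of $T_{[f]}U$, so $\Sigma_j\ne U$, and the very general $X$ satisfies $H^n(X,\Q)\cap H^{\frac n2,\frac n2}(X)=\Q H^{\frac n2}$.

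The substantial part is (iii): one must assemble the Hodge-theoretic inputs correctly --- the countability of the Hodge locus, Griffiths' residue description of the infinitesimal variation, and the numerology of Macaulay duality, which is precisely what dictates the hypotheses $n\ge4$ even and $d\ge3$ --- and separately handle the passage from the rational statement to the integral one. As an alternative to the infinitesimal argument for $\Sigma_j\ne U$, one can invoke the theorem of the fixed part together with the irreducibility of the monodromy action on $H^n_{\mathrm{prim}}(X,\Q)$, using that $H^n_{\mathrm{prim}}$ is not purely of type $(\frac n2,\frac n2)$ because $h^{\frac n2-1,\frac n2+1}_{\mathrm{prim}}=\dim R_{(\frac n2)d-(n+2)}>0$ under these hypotheses.
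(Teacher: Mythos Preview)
Your proof is correct and essentially unpacks what the paper leaves to citations. Parts (i) and (ii) are handled identically: Lefschetz hyperplane theorem for (i), then Poincar\'e duality together with torsion-freeness for (ii); the paper simply points to standard references. For (iii) the paper invokes ``Deligne's version of the Noether--Lefschetz theorem'' via a citation, whereas you supply a self-contained sketch: you first isolate the integral refinement (primitivity of $H^{n/2}$ in $H^n(X,\Z)$, verified by exhibiting a smooth degree-$d$ hypersurface containing a linear $\P^{n/2}$, which does exist for $n$ even and any $d$), and then prove the rational statement by combining the countability of the Hodge locus with Griffiths' Jacobian-ring description of the infinitesimal variation and Macaulay duality. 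The numerology $\sigma-N-d=\tfrac{n}{2}d-(n+2)\ge 0$ for $n\ge 4$ even and $d\ge 3$ is exactly right, and your use of the fact that $R$ is generated in degree $1$ (so $R_d\cdot R_{\sigma-N-d}=R_{\sigma-N}$) to conclude that a nonzero $\lambda\in R_N$ cannot annihilate all of $R_d$ is correct. The alternative route you mention via the theorem of the fixed part and irreducibility of the monodromy on $H^n_{\mathrm{prim}}$ is likewise a standard and valid argument. In short: same content, but you have written out what the paper only cites.
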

\begin{proof}
(i) follows by Lefschetz's hyperplane theorem, while (ii) follows by (i) and Poincar\'e's duality (see for example \cite[Ex.~1.2]{hu}). (iii) follows by Deligne's version of the Noether-Lefschetz's theorem (see for example \cite[Thm.~1.1]{sp}).
\end{proof}

We will use the following consequence of the above proposition.

\begin{cor}
\label{ce}
Let $n \ge 3$, let $X \subset \P^{n+1}$ be a smooth irreducible hypersurface of degree $d$ and let $\E$ be a globally generated vector bundle on $X$. For $i = \frac{n}{2}$, assume that one of the following holds:
\begin{itemize}
\item[(a)] $X$ is hyperplane section of a smooth hypersurface $X' \subset \P^{n+2}$ and $\E = \E'_{|X}$, where $\E'$ is a vector bundle on $X'$, or
\item[(b)] $X$ is very general and $d \ge 3$. 
\end{itemize}
Then, for all $1 \le i \le \frac{n}{2}$ (respectively $\frac{n}{2} < i \le n$), there exist $e_i \in \Z$ (resp. $e_i \in \Q$) such that $c_i(\E) = e_iH^i$ on $H^{2i}(X, \Z)$ (resp. on $H^{2i}(X, \Q)$). 
\end{cor}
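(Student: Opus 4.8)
```latex
\begin{proof}[Proof sketch]
The plan is to reduce everything to Proposition \ref{lef}. The key point is that for $i \ne \frac{n}{2}$ the Chern classes of $\E$ automatically lie in a rank-one lattice, so there is nothing to prove there; the only genuine content is at the middle dimension $i = \frac{n}{2}$ (which only arises when $n$ is even), and this is where hypotheses (a) or (b) are needed.

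First, suppose $1 \le i < \frac{n}{2}$. By Proposition \ref{lef}(i) we have $H^{2i}(X,\Z) \cong \Z H^i$, so $c_i(\E) = e_i H^i$ for a unique $e_i \in \Z$; no extra hypothesis is used. Next, suppose $\frac{n}{2} < i \le n$. By Proposition \ref{lef}(ii), $H^{2i}(X,\Z) \cong \Z \cdot \frac{1}{d} H^i$, so $c_i(\E) = e_i H^i$ with $e_i \in \frac{1}{d}\Z \subset \Q$, again with no extra hypothesis — this accounts for the ``resp.'' part of the statement. So it remains to handle $i = \frac{n}{2}$, i.e. $n$ even and $c_{n/2}(\E) \in H^n(X,\Z)$, where we must produce $e_{n/2} \in \Z$ with $c_{n/2}(\E) = e_{n/2} H^{n/2}$.

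In case (b), since $\E$ is a vector bundle its Chern classes are algebraic, so $c_{n/2}(\E)$ is an algebraic class in $H^n(X,\Z)$; as $X$ is very general and $d \ge 3$, Proposition \ref{lef}(iii) gives $c_{n/2}(\E) = a H^{n/2}$ for some $a \in \Z$, and we set $e_{n/2} = a$. In case (a), I would argue by functoriality of Chern classes: writing $\iota : X \hookrightarrow X'$ for the inclusion (a smooth hyperplane section, so $\dim X' = n+1$ is odd and $2i = n < \dim X'$), we have $c_{n/2}(\E) = c_{n/2}(\E'_{|X}) = \iota^* c_{n/2}(\E')$. Now $c_{n/2}(\E') \in H^n(X',\Z)$ with $n < \frac{\dim X'}{2}$ is not quite automatic from Lefschetz applied to $X' \subset \P^{n+2}$ — but $n < \frac{n+1}{2}$ fails for $n \ge 1$, so instead the right statement is: $H^n(X',\Z) \cong \Z (H')^{n/2}$ by Proposition \ref{lef}(i) applied to $X'$ precisely when $\frac{n}{2} < \frac{n+1}{2}$, which always holds. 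Hence $c_{n/2}(\E') = b (H')^{n/2}$ for some $b \in \Z$, and pulling back along $\iota$ (which sends $H'$ to $H$) gives $c_{n/2}(\E) = b H^{n/2}$, so $e_{n/2} = b \in \Z$.

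The main obstacle is the middle-dimensional case: a priori $H^n(X,\Z)$ has large rank when $n$ is even, so one genuinely needs either the Noether–Lefschetz-type input of Proposition \ref{lef}(iii) — which forces $d \ge 3$ and ``very general'' — or the trick in (a) of realizing $\E$ as a restriction from an odd-dimensional ambient hypersurface, where the relevant cohomology degree $n$ falls strictly below half the dimension and Lefschetz applies cleanly. Everything else is just bookkeeping with the standard isomorphisms of Proposition \ref{lef}(i)–(ii).
\end{proof}
```
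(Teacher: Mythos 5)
Your proposal is correct and follows essentially the same route as the paper: Proposition \ref{lef}(i)--(ii) for $i \ne \frac{n}{2}$, Lefschetz applied to the odd-dimensional ambient hypersurface $X'$ followed by restriction in case (a), and algebraicity of $c_{n/2}(\E)$ plus Proposition \ref{lef}(iii) in case (b). The only difference is that in case (b) you invoke the well-known fact that Chern classes of a vector bundle are algebraic, whereas the paper (while noting it is well-known) includes a short self-contained justification via the degeneracy locus of a general map $\O_X^{\oplus(r-i+1)} \to \E$, which is where the global generation hypothesis enters; this omission is harmless.
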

\begin{proof}
In fact, if $i \ne \frac{n}{2}$, the conclusion follows by Proposition \ref{lef}(i)-(ii). Now suppose that $n$ is even and $i=\frac{n}{2}$. Under hypothesis (a), we have that $c_i(\E') = e_i(H')^i$ on $X'$, for some $e_i \in \Z$ and $H' \in |\O_{X'}(1)|$ by Proposition \ref{lef}(i). Hence also $c_i(\E) = c_i(\E'_{|X})=e_iH^i$. Under hypothesis (b), using Proposition \ref{lef}(iii), all we need to observe is that $c_i(\E)$ is algebraic. Even though the latter fact is well-known, we add a proof for completeness' sake. We can assume that $c_i(\E) \ne 0$, so that $r:= \rk \E \ge i$. Let $\varphi : \O_X^{\oplus (r-i+1)} \to \E$ be a general morphism and consider its degeneracy locus $D_{r-i}(\varphi)$. Observe that $D_{r-i}(\varphi) \ne \emptyset$, for otherwise we have an exact sequence
$$0 \to \O_X^{\oplus (r-i+1)}  \to \E \to \F \to 0$$
where also $\F$ is a vector bundle of rank $i-1$. But then $c_i(\E)=c_i(\F)=0$, a contradiction. Therefore $c_i(\E)=[D_{r-i}(\varphi)]$ is algebraic and we are done.
\end{proof}

Next, we compute the Chern classes of hypersurfaces.

\begin{lemma}
\label{xn}
Let $X \subset \P^{n+1}$ be a smooth irreducible hypersurface of degree $d$ with hyperplane section $H$. Then we have $c_i(X)=[\sum\limits_{k=0}^i(-1)^{i-k}\binom{n+2}{k}d^{i-k}]H^i$ for all $1 \le i \le n$.
\end{lemma}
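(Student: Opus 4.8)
The plan is to use the standard presentation of a smooth hypersurface $X \subset \P^{n+1}$ of degree $d$ via the exact conormal/normal sequence and the Euler sequence, and then extract Chern classes. Recall that $N_{X/\P^{n+1}} = \O_X(d)$, so we have the short exact sequence $0 \to T_X \to {T_{\P^{n+1}}}_{|X} \to \O_X(d) \to 0$, together with the restriction of the Euler sequence $0 \to \O_X \to \O_X(1)^{\oplus(n+2)} \to {T_{\P^{n+1}}}_{|X} \to 0$. In terms of total Chern classes on $X$ this gives
\begin{equation*}
c(T_X) = \frac{c({T_{\P^{n+1}}}_{|X})}{c(\O_X(d))} = \frac{(1+H)^{n+2}}{1+dH}.
\end{equation*}
Here I use that $\Pic(X)$ is generated by $H$ (for $n \ge 2$, by Proposition~\ref{lef}(i)) so every class is a polynomial in $H$, and all the formal manipulations take place in $H^*(X,\Z)$.

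First I would expand $(1+H)^{n+2} = \sum_{j \ge 0}\binom{n+2}{j}H^j$ and $\frac{1}{1+dH} = \sum_{m \ge 0}(-1)^m d^m H^m$ (a finite sum modulo $H^{n+1}=0$). Multiplying these two series and collecting the coefficient of $H^i$ yields
\begin{equation*}
c_i(X) = \Big[\sum_{k=0}^{i}\binom{n+2}{k}(-1)^{i-k}d^{i-k}\Big]H^i,
\end{equation*}
where the index $k$ runs over the contribution $\binom{n+2}{k}H^k$ from the numerator paired with $(-1)^{i-k}d^{i-k}H^{i-k}$ from the denominator. This is exactly the claimed formula. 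One should note that the identity $c(\O_X(1)^{\oplus(n+2)}) = (1+H)^{n+2}$ together with $c(\O_X) = 1$ gives $c({T_{\P^{n+1}}}_{|X}) = (1+H)^{n+2}$ from the Euler sequence, and then the normal bundle sequence gives the quotient formula; these are both genuine short exact sequences of vector bundles since $X$ is smooth.

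There is essentially no obstacle here: the only mild point to be careful about is that the division by $1+dH$ is legitimate because $H$ is nilpotent in $H^*(X,\Z)$ (indeed $H^{n+1}=0$), so $1+dH$ is a unit in the cohomology ring, and the geometric series expansion is the correct inverse; and that the coefficient extraction must be carried out only up to degree $i \le n$, which is all that is asserted. The remaining work is the purely combinatorial expansion of the product of the two power series, which I would present as the displayed computation above without further comment.
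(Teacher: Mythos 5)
Your proof is correct and follows essentially the same route as the paper: both arguments combine the restricted Euler sequence with the normal bundle sequence and apply the Whitney formula, the only difference being that the paper extracts the recursion $c_i(X)=\binom{n+2}{i}H^i-dHc_{i-1}(X)$ and inducts, while you invert $1+dH$ directly (legitimate since $H$ is nilpotent) and read off the coefficient of $H^i$. Your appeal to Proposition~\ref{lef}(i) is unnecessary, since the Whitney formula already expresses $c(T_X)$ as a polynomial in $H$ without any knowledge of $H^{2i}(X,\Z)$, but this is harmless.
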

\begin{proof}
From the Euler sequence
$$0 \to \O_X \to \O_X(1)^{\oplus (n+2)} \to {T_{\P^{n+1}}}_{|X} \to 0$$
we find that $c_i({T_{\P^{n+1}}}_{|X})=\binom{n+2}{i}H^i$ and the normal bundle sequence
$$0 \to T_X \to {T_{\P^{n+1}}}_{|X} \to \O_X(d) \to 0$$
gives, for $i \ge 1$, that $c_i(X)=\binom{n+2}{i}H^i-dHc_{i-1}(X)$. Now the statement follows by induction on $i$.
\end{proof}

We now compute Chern classes of Ulrich bundles on hypersurfaces.

\begin{lemma}
\label{xne}
Let $n \ge 3$ and let $X \subset \P^{n+1}$ be a smooth irreducible hypersurface of degree $d$ with hyperplane section $H$. Let $\E$ be a rank $r$ Ulrich bundle on $X$. If $i \le \frac{n}{2}$ consider $c_i(\E) \in H^{2i}(X, \Z)$, if $i > \frac{n}{2}$ consider $c_i(\E) \in H^{2i}(X, \Q)$; if $i=\frac{n}{2}$ assume in addition that either (a) or (b) of Corollary \ref{ce} holds. Then:
\begin{itemize}
\item[(1)] $c_1(\E)=\frac{r}{2}(d-1)H$.
\item[(2)] $c_2(\E)=\frac{r}{24}(d-1)(3rd-2d-3r+4)H^2$.
\item[(3)] $c_3(\E)=\frac{r}{48}(r-2)(d-1)^2(dr-r+2)H^3$.
\item[(4)] $\begin{aligned}[t] 
c_4(\E)=\frac{r}{5760}(d-1)&[(15r^3-60r^2+20r+48)d^3-(45r^3-240r^2+340r-48)d^2+\\
& +(45r^3-300r^2+640r-432)d-15r^3+120r^2-320r+288]H^4
\end{aligned}$
\item[(5)] If $r=5$, $c_5(\E)=\frac{1}{2304}(d-1)^2(5d-1)(23d^2-54d+19)H^5$.
\item[(6)] If $r=6$, $c_5(\E)=\frac{1}{40}(d-1)^2(2d-1)(2d-3)(3d-1)H^5$.
\item[(7)] If $r=6$, $c_6(\E)=\frac{1}{1680}(d-1)(2d-1)(3d-1)(6d-1)(5-3d+2d^2)H^6$.
\item[(8)] If $r=7$, $c_5(\E)=\frac{7}{3840}(d-1)^2(7d-3)(59-150d+79d^2)$
\item[(9)] If $r=7$, $\begin{aligned}[t]
c_6(\E)=\frac{1}{414720}(d-1)(& -13837+119975d-375310d^2+524330d^3-330853d^4\\
& +87215d^5)H^6.
\end{aligned}$
\item[(10)] If $r=7$, $c_7(\E)=\frac{1}{829440}(d-1)^2(7d-1)(913-5620d+10170d^2-6380d^3+2837d^4)H^7$.
\end{itemize}
\end{lemma}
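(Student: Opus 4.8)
The strategy is to reduce each formula to the computation of a \emph{top} Chern class on a hypersurface of small dimension, where Lemma~\ref{ulr}(iv), (vii), (ix)--(xiii) apply directly. By Corollary~\ref{ce} (invoking hypothesis (a) or (b) precisely in the borderline case $i=\tfrac n2$), for each $i$ in the stated range there is a unique scalar $e_i$, integral if $i\le\tfrac n2$ and rational otherwise, with $c_i(\E)=e_iH^i$; so it only remains to identify $e_i$. To do this I would restrict to $X_i$, the intersection of $X$ with $n-i$ general hyperplanes. By Bertini $X_i\subset\P^{i+1}$ is a smooth irreducible hypersurface of degree $d$ and dimension $i$, by iterating Lemma~\ref{ulr}(v) the bundle $\E_{|X_i}$ is a rank $r$ Ulrich bundle on it, and since Chern classes are compatible with restriction we have $c_j(\E_{|X_i})=e_jH^j$ for all $j\le i$ (with the \emph{same} $e_j$), together with $\int_{X_i}H^i=d$. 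Hence $e_i=\tfrac1d\int_{X_i}c_i(\E_{|X_i})$, and the right-hand side is a top Chern class computed by the relevant part of Lemma~\ref{ulr}.

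Parts (1) and (2) require no restriction. Assertion (1) is immediate from Lemma~\ref{ulr}(iv) together with $K_X=(d-n-2)H$. For (2) one substitutes $c_1(\E)=\tfrac r2(d-1)H$, $K_X=(d-n-2)H$ and the value of $c_2(X)$ from Lemma~\ref{xn} into Lemma~\ref{ulr}(vii); a short manipulation shows that the dependence on $n$ cancels and that the coefficient of $H^2$ simplifies to $\tfrac r{24}(d-1)(3rd-2d-3r+4)$.

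For $i=3,\dots,7$ I would run the restriction described above and feed the resulting data into the top-degree formula of the corresponding dimension: Lemma~\ref{ulr}(ix) on $X_3$ for (3), (x) on $X_4$ for (4), (xi) on $X_5$ for (5), (6), (8), (xii) on $X_6$ for (7), (9), and (xiii) on $X_7$ for (10). The inputs in each case are: the coefficients $e_1,\dots,e_{i-1}$ of the lower Chern classes of $\E_{|X_i}$, already known from the earlier parts of the statement (read off recursively, and, when $r\in\{5,6,7\}$ is fixed, from the specializations in (5)--(8)); the Chern classes $K_{X_i}=(d-i-2)H$ and $c_j(X_i)=\big[\sum_{k=0}^j(-1)^{j-k}\binom{i+2}{k}d^{j-k}\big]H^j$ from Lemma~\ref{xn} with $n$ replaced by $i$; and the Euler characteristic $\chi(\O_{X_i})=1+(-1)^i\binom{d-1}{i+1}$, obtained from $0\to\O_{\P^{i+1}}(-d)\to\O_{\P^{i+1}}\to\O_{X_i}\to 0$. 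Substituting and intersecting against the point class expresses $e_id$ as an explicit polynomial in $d$ and $r$; dividing by $d$ yields $e_i$ and hence $c_i(\E)=e_iH^i$.

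The one genuine obstacle is the length of the bookkeeping: the formulas in Lemma~\ref{ulr}(xii)--(xiii) are long, so parts (7), (9), (10) amount to expanding and simplifying sizeable polynomial identities in $d$ and $r$ (checking along the way that the factor $d$ always divides out, which it does since $\int_{X_i}H^i=d$ and $d\mid(d-\chi(\O_{X_i}))$ as polynomials). These verifications are entirely mechanical and involve no conceptual subtlety; I would carry them out, as the authors do, in the appendix.
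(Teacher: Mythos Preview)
Your proposal is correct and follows essentially the same route as the paper: use Corollary~\ref{ce} to write $c_i(\E)=e_iH^i$, then restrict to a section $X_i$ of the appropriate dimension so that the top-degree formulas of Lemma~\ref{ulr}(iv),(vii),(ix)--(xiii) apply, plug in the Chern classes of $X_i$ from Lemma~\ref{xn} and the previously computed $e_j$, and read off $e_i$. The only cosmetic difference is that the paper groups the restrictions by the rank $r$ (to $X_5$ for (5), $X_6$ for (6)--(7), $X_7$ for (8)--(10)) rather than by the index $i$ as you do, but this is the same computation carried out at a slightly different stage.
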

\begin{proof}
We use Lemma \ref{xn} and Corollary \ref{ce}. The formulas (1)-(4) follow by using Lemma \ref{xn} and Lemma \ref{ulr}(iv), (vii), (ix), (x) (see \cite[Out(29), Out(35), Out(41)]{mcc}). Formula (5) (respectively (6)-(7), resp. (8)-(10)) follows by restricting to $X_5$ (resp. to $X_6$; resp. to $X_7$), using Lemma \ref{ulr}(xi)-(xiii) (see \cite[Out(43)]{mc5}, \cite[Out(124), Out(130)]{mc6}, \cite[Out(87), Out(93), Out(99)]{mc7}).
\end{proof}

\section{Proof of Theorem \ref{main}}
\label{pf}

We will prove the theorem by using the degeneracy locus introduced in section \ref{dege}.

In order to simplify statements we will use the following 

\begin{setup}
\label{set2}

\hskip 3cm

\begin{itemize} 
\item $X \subset \P^{n+1}$ is a smooth irreducible hypersurface of degree $d \ge 3$ with hyperplane section $H$.
\item $\E$ is a rank $r$ Ulrich bundle on $X$.
\item $V \subset H^0(\E)$ is a general subspace of dimension $2$, giving rise to
$\varphi : V \otimes \O_X \to \E$.
\item $Z =D_1(\varphi)$ is the corresponding degeneracy locus, $H_Z=H_{|Z}$. 
\end{itemize} 
\end{setup}

The next goal is to compute the necessary invariants of $Z$.

First, we do it in dimension $6$.

\begin{lemma} 
\label{x6z}
Notation as in Setup \ref{set2} with $n=6$. 

If $r=4$, the following hold:
\begin{itemize}
\item[(i)] $Z$ is a smooth irreducible threefold.
\item[(ii)] $\deg Z=\frac{d}{3}(d-1)^2(2d-1)$.
\item[(iii)] $2c_2(Z)=-\frac{4}{3}(2d-5)(5d-19)H_Z^2+(8d-22)K_ZH_Z$.
\end{itemize}
Moreover we have a resolution
\begin{equation}
\label{res4}
0 \to \O_X^{\oplus 3} \to \E^{\oplus 2} \to \Lambda^2 \E \to \I_{Z/X}(2d-2) \to 0
\end{equation}
If $r=5$, the following hold:
\begin{itemize}
\item[(iv)] $Z$ is a smooth irreducible surface.
\item[(v)] $\deg Z=\frac{d}{1152}(d-1)(-187+893d-1277d^2+523d^3)$.
\item[(vi)] $K_Z^2=(7d-21)K_ZH_Z-\frac{1}{4}(7d-21)^2\deg Z$.
\item[(vii)] $3c_2(Z)=-\frac{1}{8}(195d^2-1132d+1609)\deg Z+(13d-34)K_ZH_Z$.
\end{itemize}
Moreover we have a resolution
\begin{equation}
\label{res5}
0 \to \O_X^{\oplus 4} \to \E^{\oplus 3} \to (\Lambda^2 \E)^{\oplus 2} \to \Lambda^3 \E \to \I_{Z/X}(\frac{5}{2}(d-1)) \to 0
\end{equation}
\end{lemma}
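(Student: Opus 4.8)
The plan is to specialize the general machinery of Sections \ref{dege}--\ref{inv} to $n=6$ and $r\in\{4,5\}$; no new geometric idea is needed, only the right substitutions. First I record the ambient data: since $X\subset\P^7$ is a hypersurface, $K_X=(d-n-2)H=(d-8)H$, so $X$ is subcanonical; and by Lemma \ref{xne}(1) (equivalently Lemma \ref{ulr}(iv),(vi)) we have $\det\E=\O_X(u)$ with $u=\tfrac r2(d-1)>0$, so in the notation of Setup \ref{set} the divisor is $D=\tfrac r2(d-1)H$. Since $\tfrac{n+1}2=\tfrac72\le r\le 7=n+1$ for $r\in\{4,5\}$, the triple $(X,r,\E)$ fits Setup \ref{set}, and as $Z\neq\emptyset$ by hypothesis, Lemma \ref{nldg} applies: $Z$ is smooth of pure dimension $n+1-r$ (a threefold if $r=4$, a surface if $r=5$), $[Z]=c_{r-1}(\E)$, and $Z$ carries the resolution \eqref{ea1}.

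For the smoothness-and-irreducibility statements (i) and (iv) I would invoke Lemma \ref{duecasi}: its case (a), valid for $4\le n\le 7$ and $r=4$, covers $(n,r)=(6,4)$, and its case (b), valid for $6\le n\le 9$ and $r=5$, covers $(6,5)$; in both situations $Z$ is smooth and irreducible. The resolutions \eqref{res4} and \eqref{res5} are just \eqref{ea1} twisted by $\O_X(D)$. Indeed $F_i=(\Lambda^{r-1-i}\E\otimes\O_X(-D))^{\oplus i}$, so for $r=4$ one has $D=(2d-2)H$ and $(F_3,F_2,F_1)=(\O_X(-D)^{\oplus 3},\ \E(-D)^{\oplus 2},\ \Lambda^2\E(-D))$, giving $0\to\O_X^{\oplus 3}\to\E^{\oplus 2}\to\Lambda^2\E\to\I_{Z/X}(2d-2)\to 0$ after the twist; for $r=5$ one has $D=\tfrac52(d-1)H$ and $(F_4,F_3,F_2,F_1)=(\O_X(-D)^{\oplus 4},\ \E(-D)^{\oplus 3},\ (\Lambda^2\E(-D))^{\oplus 2},\ \Lambda^3\E(-D))$, giving \eqref{res5} after the twist.

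The remaining items are numerical. Since $[Z]=c_{r-1}(\E)$ and $H^6=\deg X=d$ on $X$, we get $\deg Z=c_{r-1}(\E)\cdot H^{n+1-r}$, which is a genuine numerical invariant; for $r=4$ this is $c_3(\E)\cdot H^3$ and for $r=5$ it is $c_4(\E)\cdot H^2$, so (ii) and (v) follow by plugging in Lemma \ref{xne}(3), resp.\ (4). For $r=5$, identity (vi) comes from \eqref{k2}: since $(K_X+D)_{|Z}=\bigl((d-8)+\tfrac52(d-1)\bigr)H_Z=\tfrac72(d-3)H_Z$ and $H_Z^2=\deg Z$ (as $Z$ is a surface), expanding $(K_Z-(K_X+D)_{|Z})^2=0$ yields $K_Z^2=7(d-3)K_ZH_Z-\tfrac{49}{4}(d-3)^2\deg Z=(7d-21)K_ZH_Z-\tfrac14(7d-21)^2\deg Z$. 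Finally, (iii) and (vii) come from \eqref{ch2}: one substitutes $c_2(X)_{|Z}=(d^2-8d+28)H_Z^2$ (Lemma \ref{xn} with $n=6$), $c_2(\E)_{|Z}$ from Lemma \ref{xne}(2), ${K_X}_{|Z}=(d-8)H_Z$, and $D_{|Z}=\tfrac r2(d-1)H_Z$; for $r=4$ this collapses to $2c_2(Z)=-\tfrac43(2d-5)(5d-19)H_Z^2+(8d-22)K_ZH_Z$, and for $r=5$, replacing $H_Z^2$ by $\deg Z$, to $3c_2(Z)=-\tfrac18(195d^2-1132d+1609)\deg Z+(13d-34)K_ZH_Z$. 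The only real obstacle is the polynomial arithmetic in these last substitutions and in the degree formulas; there is no geometric subtlety beyond Lemmas \ref{nldg}, \ref{duecasi} and \ref{xne}, and no Noether--Lefschetz hypothesis is required here, since $c_1(\E),c_2(\E),c_2(X)$ all lie strictly below the middle cohomology of $X$ while $\deg Z$ is numerical.
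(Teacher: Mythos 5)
Your route is essentially the paper's: specialize Lemma \ref{nldg} (including \eqref{ea1}, \eqref{k2}, \eqref{ch2}) to $n=6$, $r\in\{4,5\}$, get irreducibility from Lemma \ref{duecasi}(a)/(b), and obtain (ii), (iii), (v), (vi), (vii) by substituting $K_X=(d-8)H$, $D=\tfrac r2(d-1)H$ and the values from Lemmas \ref{xn} and \ref{xne}; your substitutions and the resulting polynomials are correct. However, there is one genuine gap: you write ``as $Z\ne\emptyset$ by hypothesis, Lemma \ref{nldg} applies,'' but nonemptiness of $Z$ is \emph{not} a hypothesis of Lemma \ref{x6z} or of Setup \ref{set2} (unlike Setup \ref{set}/Lemma \ref{duecasi}, where it is explicitly assumed). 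Part of the content of (i) and (iv) is precisely that $Z$ is a nonempty smooth irreducible threefold, resp.\ surface, and every tool you use afterwards --- Lemma \ref{nldg} for smoothness, $[Z]=c_{r-1}(\E)$ and the resolution, and Lemma \ref{duecasi} for irreducibility --- presupposes $Z\ne\emptyset$. If $Z$ were empty, (ii) and (v) would be false and the whole Euler-characteristic argument in the main theorem would collapse, so this step cannot be waved away.

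The missing step is exactly the first step of the paper's proof, and it is easy to supply from ingredients you already have: for $r=4$ restrict to $X_3$ (which automatically satisfies condition (a) of Corollary \ref{ce}, being a hyperplane section of $X_4$ with $\E$ extending), so Lemma \ref{xne}(3) gives $c_3(\E)H^3=c_3(\E_{|X_3})=\tfrac d3(d-1)^2(2d-1)\ne 0$ for $d\ge 3$; hence $c_3(\E)\ne 0$ and Lemma \ref{nonvuoto}(i) yields $Z\ne\emptyset$. Similarly for $r=5$ one checks $c_4(\E)H^2\ne 0$ via Lemma \ref{xne}(4). Only after this may you invoke Lemma \ref{nldg} and Lemma \ref{duecasi} as you do. With that insertion your argument coincides with the paper's proof; your side remarks (deriving (vi) from \eqref{k2}, and the observation that no Noether--Lefschetz-type hypothesis on $X$ itself is needed for this lemma since all classes involved lie off the middle cohomology or are computed by restriction to linear sections) are consistent with what the paper does.
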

\begin{proof}
Since $(X_3, \E_{|X_3})$ satisfies (a) of Corollary \ref{ce}, we get by Lemma \ref{xne}(3), that 
\begin{equation}
\label{c_3}
c_3(\E)H^3=c_3(\E_{|X_3})=\frac{d}{3}(d-1)^2(2d-1) 
\end{equation}
so that, in particular, $c_3(\E) \ne 0$. It follows by Lemma \ref{nonvuoto} that $Z \ne \emptyset$, hence Lemma \ref{nldg} applies. Therefore $[Z]=c_3(\E)$, $Z$ is a smooth threefold and (ii) follows by \eqref{c_3}. We get \eqref{res4} by \eqref{ea1} and Lemma \ref{xne}(1). Moreover, from \eqref{ch2}, we get (iii) using Lemmas \ref{xn} and \ref{xne}(1)-(2). Also, $Z$ is irreducible by Lemma \ref{duecasi}(a), proving (i). Next, (iv)-(vii) and \eqref{res5} are proved in the same way using the same lemmas.
\end{proof}

Next, we do it in dimension $8$.

\begin{lemma} 
\label{x8z}
Notation as in Setup \ref{set2} with $n=8$. 

If $r=6$, the following hold:
\begin{itemize}
\item[(i)] $Z$ is a smooth threefold.
\item[(ii)] $\deg Z=\frac{d}{40}(d-1)^2(2d-1)(2d-3)(3d-1)$.
\item[(iii)] $4c_2(Z)=-(393-253d+40d^2)H_Z^2+(19d-55)K_ZH_Z$.
\end{itemize}
Moreover we have a resolution
\begin{equation}
\label{res6}
0 \to \O_X^{\oplus 5} \to \E^{\oplus 4} \to (\Lambda^2 \E)^{\oplus 3} \to (\Lambda^3 \E)^{\oplus 2} \to \Lambda^4 \E \to \I_{Z/X}(3d-3) \to 0
\end{equation}
If $r=7$, the following hold:
\begin{itemize} 
\item[(iv)] $Z$ is a smooth surface.
\item[(v)] $\deg Z=\frac{d}{414720}(d-1)(-13837+119975d-375310d^2+524330d^3-330853d^4+87215d^5).$
\item[(vi)] $K_Z^2=(9d-27)K_ZH_Z-\frac{1}{4}(9d-27)^2\deg Z$.
\item[(vii)] $5c_2(Z)=-\frac{1}{24}(12529-8592d+1463d^2)\deg Z+(26d-71)K_ZH_Z$.
\end{itemize}
Moreover we have a resolution
\begin{equation}
\label{res7}
0 \to \O_X^{\oplus 6} \to \E^{\oplus 5} \to (\Lambda^2 \E)^{\oplus 4} \to (\Lambda^3 \E)^{\oplus 3} \to (\Lambda^4 \E)^{\oplus 2} \to \Lambda^5 \E \to \I_{Z/X}(\frac{7}{2}(d-1)) \to 0
\end{equation}
\end{lemma}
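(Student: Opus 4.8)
The plan is to repeat, almost verbatim, the argument used for Lemma \ref{x6z}, now with $n=8$ and $r\in\{6,7\}$: produce $Z$ from a non-vanishing Chern class of $\E$, invoke Lemma \ref{nldg} for its smoothness, dimension, fundamental class and resolution, and then feed the Chern class formulas of Section \ref{inv} into \eqref{k2} and \eqref{ch2}. Since $\E$ is Ulrich it is globally generated with $\det\E=\O_X\!\left(\frac r2(d-1)H\right)$ by Lemma \ref{ulr}(i) and Lemma \ref{xne}(1), so Setup \ref{set} applies with $D=\frac r2(d-1)H$, and the conditions $\frac{n+1}{2}\le r\le n+1$ hold for $r=6,7$ when $n=8$.

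First I would settle existence, dimension and degree. For $r=6$ we need $c_5(\E)$: since $5>\frac n2$, no genericity hypothesis on $X$ is needed and Lemma \ref{xne}(6) gives $c_5(\E)=\frac1{40}(d-1)^2(2d-1)(2d-3)(3d-1)H^5$ (equivalently, restrict to the five-fold $X_5$, which satisfies (a) of Corollary \ref{ce}, and read off $c_5(\E)H^3=c_5(\E_{|X_5})$). This class is nonzero for every $d\ge 3$, so $Z\ne\emptyset$ by Lemma \ref{nonvuoto}(i), and then Lemma \ref{nldg} shows $Z$ is smooth of pure dimension $n+1-r=3$ with $[Z]=c_5(\E)$; this is (i), and pairing with $H^3$ and using $H^8\cdot[X]=d$ gives (ii). The case $r=7$ is handled identically with Lemma \ref{xne}(9) in place of Lemma \ref{xne}(6): again $6>\frac n2$, so $c_6(\E)=\frac1{414720}(d-1)(\cdots)H^6$, the quintic factor is positive for $d\ge 3$, hence $c_6(\E)\ne 0$, and Lemma \ref{nldg} gives that $Z$ is smooth of dimension $n+1-r=2$ with $[Z]=c_6(\E)$, which is (iv) and, after pairing with $H^2$, (v). In contrast to Lemma \ref{x6z} we make no claim of irreducibility here, so the connectedness criteria (Lemmas \ref{duecasi} and \ref{ipe}) are not invoked.

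Next come the resolutions, which are immediate from \eqref{ea1}: with $D=3(d-1)H$ for $r=6$ and $D=\frac72(d-1)H$ for $r=7$ by Lemma \ref{xne}(1), the terms $F_i=(\Lambda^{r-1-i}\E\otimes\O_X(-D))^{\oplus i}$ for $1\le i\le r-1$ become, after twisting the complex by $\O_X(D)$, exactly the displayed sequences \eqref{res6} and \eqref{res7} (with final term $\I_{Z/X}(3d-3)$, respectively $\I_{Z/X}(\frac72(d-1))$).

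Finally I would derive the numerical identities (iii), (vi), (vii). One substitutes into \eqref{k2} and \eqref{ch2} the values $K_X=(d-n-2)H=(d-10)H$ and $c_2(X)=(d^2-10d+45)H^2$ from Lemma \ref{xn}, together with $c_1(\E)=\frac r2(d-1)H$ and $c_2(\E)=\frac r{24}(d-1)(3rd-2d-3r+4)H^2$ from Lemma \ref{xne}(1)-(2), all restricted to $Z$. When $Z$ is a surface ($r=7$) one also uses $H_Z^2=H^2\cdot[Z]=\deg Z$: then \eqref{k2}, rewritten as $K_Z^2=2K_Z(K_X+D)_{|Z}-(K_X+D)_{|Z}^2$ with $(K_X+D)=\frac92(d-3)H$, gives (vi), and \eqref{ch2} with $r-2=5$ gives (vii) after collecting the coefficients of $\deg Z$ and $K_ZH_Z$; when $Z$ is a threefold ($r=6$), \eqref{ch2} with $r-2=4$ yields (iii) in the same way. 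The only points demanding care are the non-vanishing of $c_{r-1}(\E)$ for all $d\ge3$ (needed to know $Z\ne\emptyset$) and keeping track of the twist $\O_X(D)$ in the Eagon--Northcott complex; there is no conceptual obstacle beyond Lemma \ref{nldg}, and the bulk of the labour is the polynomial algebra in \eqref{ch2}, which we carry out in the appendix.
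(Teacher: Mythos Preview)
Your proposal is correct and follows exactly the route the paper intends: the paper's proof is simply ``Similar to the proof of Lemma \ref{x6z}'', and you have faithfully spelled out what that means for $n=8$ and $r\in\{6,7\}$ --- non-emptiness of $Z$ via $c_{r-1}(\E)\ne 0$ and Lemma \ref{nonvuoto}(i), smoothness, dimension, $[Z]=c_{r-1}(\E)$ and the Eagon--Northcott resolution from Lemma \ref{nldg}, and the numerical identities from \eqref{k2}--\eqref{ch2} together with Lemmas \ref{xn} and \ref{xne}. You are also right to flag that, in contrast to Lemma \ref{x6z}, no irreducibility of $Z$ is asserted here, so Lemmas \ref{duecasi} and \ref{ipe} play no role.
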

\begin{proof}
Similar to the proof of Lemma \ref{x6z}.
\end{proof}

\begin{remark}
If $X$ is general and $r=6, d \ge 4$ or  $r=7, d \ge 6$, we have, by Lemma \ref{ipe} and Remark \ref{ipe2}, that $Z$ in Lemma \ref{x8z} is irreducible. However this is not needed for our purposes, see Remark \ref{funz}.
\end{remark}

We are now ready for the proof of the main theorem.

\renewcommand{\proofname}{Proof of Theorem \ref{main}}
\begin{proof}
Let $X \subset \P^{n+1}$ be a smooth hypersurface of degree $d \ge 3$ with hyperplane section $H$ and let $\E$ be a rank $r$ Ulrich bundle on $X$. Under the hypotheses of the theorem, it follows by \cite[Thm.~2]{lr3} that $\uc(X) \ge 4$, hence we need to consider the cases $r=4, 5, 6, 7$.

Let $V \subset H^0(\E)$ be a general subspace of dimension $2$, giving rise to $\varphi : V \otimes \O_X \to \E$ and let 
$$Z =D_1(\varphi)$$
be the corresponding degeneracy locus, with $H_Z=H_{|Z}$.

The plan is to compute the Euler characteristic of $Z$ in two different ways and get a contradiction. In order to do this, we distinguish two cases.

When $Z$ is a surface, we will use the formula below, that follows by Riemann-Roch,
\begin{equation}
\label{hk}
K_ZH_Z=-2\chi(\O_Z(1))+2\chi(\O_Z)+\deg(Z)
\end{equation}
together with \eqref{res5}, Lemma \ref{x6z}(v)-(vii) and Lemma \ref{x8z}(v)-(vii).

When $Z$ is a threefold, we will use the formulas below, that follow again by Riemann-Roch,
\begin{equation}
\label{kh2}
K_Z H_Z^2=4\chi(\O_Z(1))-2\chi(\O_Z(2))-2\chi(\O_Z)+2\deg(Z)
\end{equation}
and
\begin{equation}
\label{kh2+hc_2}
K_Z^2 H_Z+H_Z c_2(Z)=12\chi(\O_Z(1))-12\chi(\O_Z)-2\deg(Z)+3K_Z H_Z^2.
\end{equation}
together with \eqref{res4}, \eqref{res6}, Lemma \ref{x6z}(ii)-(iii) and Lemma \ref{x8z}(ii)-(iii).

Now suppose that $r=4$.

If $n \ge 7$ we have that $\E_{|X_6}$ is a rank $4$ Ulrich bundle on $X_6 \subset \P^7$ by Lemma \ref{ulr}(v), therefore condition (a) of Corollary \ref{ce} is satisfied for $(X_6,\E_{|X_6})$. Hence, in order to show that there is no rank $4$ Ulrich bundle on $X$, we can assume that $n=6$ and that $X$ satisfies either (a) or (b) of Corollary \ref{ce}.

Let $m \in \Z$. First, from
$$0 \to \O_{\P^7}(m-d) \to \O_{\P^7}(m) \to \O_X(m) \to 0$$
we have that
\begin{equation}
\label{chi6}
\chi(\O_X(m))=\chi(\O_{\P^7}(m))-\chi(\O_{\P^7}(m-d))=\binom{m+7}{7}-\binom{m-d+7}{7}.
\end{equation}
From \eqref{res4} we find
$$0 \to \O_X^{\oplus 3}(m-2d+2) \to \E^{\oplus 2}(m-2d+2) \to (\Lambda^2 \E)(m-2d+2) \to \I_{Z/X}(m) \to 0.$$
Using the above, \eqref{chi6} and Lemma \ref{ulr}(viii), we get
\begin{equation}
\label{chiz4}
\begin{aligned}[t] 
\chi(\O_Z(m)) & = \chi(\O_X(m))-\chi(\I_{Z/X}(m))=\\
& = \chi(\O_X(m))-\chi((\Lambda^2 \E)(m-2d+2))+2\chi(\E(m-2d+2))-3\chi(\O_X(m-2d+2)).\\
& = \binom{m+7}{7}-\binom{m-d+7}{7}-\chi((\Lambda^2 \E)(m-2d+2))+8d\binom{m-2d+8}{6}\\
& -3\binom{m-2d+9}{7}+3\binom{m-3d+9}{7}.
\end{aligned}
\end{equation}
Using Lemmas \ref{xn}, \ref{xne}, the expression of $\chi((\Lambda^2 \E)(m-2d+2))$ is computed in the Appendix, Lemma \ref{suz4}(1). Setting $m=0$ in \eqref{chiz4}, we find (see \cite[Out(103)]{mc4}),
\begin{equation}
\label{chi60}
\chi(\O_Z)=-\frac{d}{340200}(d-1)(2d-1)(2303699-4840923d+3320849d^2-947157d^3+97472d^4).
\end{equation}
Similarly, setting $m=1, 2$, we get (see \cite[Out(105), Out(107)]{mc4}),
$$\chi(\O_Z(1))=-\frac{d}{340200}(d-1)(2d-1)(4034939-7679703d+4543679d^2-1107807d^3+97472d^4)$$
and
$$\chi(\O_Z(2))=-\frac{d}{340200}(d-1)(2d-1)(6454139-11403003d+5951729d^2-1268457d^3+97472d^4).$$
Using the above, Lemma \ref{x6z}(ii), \eqref{kh2} and \eqref{kh2+hc_2} we have (see \cite[Out(110), Out(112)]{mc4}),
$$K_ZH_Z^2= \frac{d}{45}(d-1)(2d-1)(152-204d+49d^2)$$
and
$$K_Z^2 H_Z+H_Z c_2(Z)=\frac{d}{15}(d-1)(2d-1)(-754+1288d-598d^2+85d^3).$$
Next, using the above and Lemma \ref{x6z}(ii)-(iii)(see \cite[Out(114), Out(116)]{mc4}), we have
$$H_Z c_2(Z)= \frac{d}{45}(d-1)(2d-1)(-722+1272d-625d^2+96d^3)$$
and
$$K_Z^2H_Z=\frac{d}{45}(d-1)(2d-1)(3d-10)(154-213d+53d^2).$$
Using the above and Lemma \ref{x6z}(iii) we find (see \cite[Out(118)]{mc4}),
$$K_Z c_2(Z)=\frac{d}{135}(d-1)(2d-1)(21940-46104d+31627d^2-9021d^3+928d^4).$$
Since $\chi(\O_Z)=-\frac{1}{24}K_Z c_2(Z)$, equating with \eqref{chi60} (see \cite[Out(120)]{mc4}) gives the contradiction
$$(-1+d)d(1+d)(-1+2d)(1+2d)(-1+4d)(1+4d)=0.$$
Next, suppose that $r=5$.

Arguing as in the beginning of the case $r=4$, we can assume that $n=6$ and that $X$ satisfies either (a) or (b) of Corollary \ref{ce}.

From \eqref{res5}, setting $u=\frac{5}{2}(d-1)$, we find
$$0 \to \O_X(m-u)^{\oplus 4} \to \E(m-u)^{\oplus 3} \to (\Lambda^2 \E)(m-u)^{\oplus 2} \to (\Lambda^3 \E)(m-u) \to \I_{Z/X}(m) \to 0.$$
Using the above, \eqref{chi6} and Lemma \ref{ulr}(viii) one gets
\begin{equation}
\label{chiz5}
\begin{aligned}[t] 
\chi(\O_Z(m)) & = \binom{m+7}{7}-\binom{m-d+7}{7}-\chi((\Lambda^3 \E)(m-u))+2\chi((\Lambda^2 \E)(m-u))\\& -15d\binom{m-u+6}{6}+4\binom{m-u+7}{7}-4\binom{m-u-d+7}{7}.
\end{aligned}
\end{equation}
Using Lemmas \ref{xn}, \ref{xne}, the expressions of $\chi((\Lambda^2 \E)(m-u))$ and $\chi((\Lambda^3 \E)(m-u))$ are computed in the Appendix, Lemma \ref{suz5}(1)-(2). Setting $m=0$ in \eqref{chiz5}, we find (see \cite[Out(53)]{mc5}),
\begin{equation}
\label{chi601}
\chi(\O_Z)=\frac{d}{1548288}(d-1)(-3500495+19507441d-37476458d^2+30435862d^3-10691399d^4+1349497d^5).
\end{equation}
Similarly one gets (see \cite[Out(55)]{mc5})
$$\chi(\O_Z(1))=\frac{d}{1548288}(d-1)(-4964783+27017713d-49890986d^2+37892374d^3-12037415d^4+1349497 d^5).$$
Using \eqref{hk} and Lemma \ref{x6z}(v), we get (see \cite[Out(57)]{mc5}),
$$K_Z H_Z=\frac{d}{1152}(d-1)(1992-10283d+17197d^2-10573d^3+2003d^4).$$
Now, using the above and Lemma \ref{x6z}(v)-(vii)(see \cite[Out(59), Out(61)]{mc5}) we find
$$K_Z^2=\frac{7d}{4608}(d-3)(d-1)(4041-21070d+35720d^2-22370d^3+4351d^4)$$
and
$$c_2(Z)=\frac{d}{27648}(d-1)(-240941+1355623d-2644982d^2+2203138d^3-803357d^4+106327d^5).$$
Since $\chi(\O_Z)=\frac{1}{12}(K_Z^2+c_2(Z))$, equating with \eqref{chi601} (see \cite[Out(63)]{mc5}), one gets the contradiction 
$$(-1+d)d(1+d)(-1+5d)(1+5d)(-13+ 61d^2)=0.$$
Now, suppose that $r=6$.

Arguing as in the beginning of the case $r=4$, we can assume that $n=8$ and that $X$ satisfies either (a) or (b) of Corollary \ref{ce}.

Let $m \in \Z$. First, from
$$0 \to \O_{\P^9}(m-d) \to \O_{\P^9}(m) \to \O_X(m) \to 0$$
we have that
\begin{equation}
\label{chi8}
\chi(\O_X(m))=\chi(\O_{\P^9}(m))-\chi(\O_{\P^9}(m-d))=\binom{m+9}{9}-\binom{m-d+9}{9}.
\end{equation}
From \eqref{res6}, we find
$$\begin{aligned}[t] 
0 \to \O_X^{\oplus 5}(m-3d+3) \to \E^{\oplus 4}(m-3d+3) \to (\Lambda^2 \E)^{\oplus 3}(& m-3d+3) \to (\Lambda^3 \E)^{\oplus 2}(m-3d+3) \to\\
& \to (\Lambda^4 \E)(m-3d+3) \to \I_{Z/X}(m) \to 0.
\end{aligned}$$
Using the above, \eqref{chi8} and Lemma \ref{ulr}(viii), we get
\begin{equation}
\label{chiz6}
\begin{aligned}[t] 
\chi(\O_Z(m)) & = \binom{m+9}{9}-\binom{m-d+9}{9}-\chi((\Lambda^4 \E)(m-3d+3))+2\chi((\Lambda^3 \E)(m-3d+3))\\ & -3\chi((\Lambda^2 \E)(m-3d+3))-24d\binom{m-3d+11}{8}+5\binom{m-3d+12}{9}-5\binom{m-4d+12}{9}.
\end{aligned}
\end{equation}
Using Lemmas \ref{xn}, \ref{xne}, the expressions of $\chi((\Lambda^i \E)(m-3d+3)), 2 \le i \le 4$ are computed in the Appendix, Lemma \ref{suz6}. Setting $m=0$ in \eqref{chiz6}, we find (see \cite[Out(144)]{mc6}),
\begin{equation}
\label{chi80}
\begin{aligned}[t] 
\chi(\O_Z)=-\frac{d}{84672000}(d-1)(2d-1)(3d-1)(& -287792399+809751606d-812826025d^2+397479390d^3\\
& -96129996d^4+9172584d^5).
\end{aligned}
\end{equation}
Similarly, setting $m=1, 2$, we get (see \cite[Out(147), Out(150)]{mc6}),
$$\begin{aligned}[t] 
\chi(\O_Z(1))=-\frac{d}{84672000}(d-1)(2d-1)(3d-1)(& -445115999+1180443606d-1099476025d^2\\
& +492231390d^3-107520396 d^4+9172584d^5)
\end{aligned}$$
and
$$\begin{aligned}[t] 
\chi(\O_Z(2))=-\frac{d}{84672000}(d-1)(2d-1)(3d-1)(&-650571599+1646542806d-1441062025d^2\\
& +596660190d^3-118910796d^4+9172584 d^5).
\end{aligned}$$
Using the above, Lemma \ref{x8z}(ii), \eqref{kh2} and \eqref{kh2+hc_2} we have (see \cite[Out(153), Out(155)]{mc6}),
$$K_Z H_Z^2= \frac{d}{840}(d-1)(2d-1)(3d-1)(-829+1683d-1006d^2+192 d^3)$$
and
$$K_Z^2 H_Z+H_Z c_2(Z)=\frac{d}{280}(d-1)(2d-1)(3d-1)(5372-12957d+10341d^2-3568d^3+452d^4).$$
Next, using the above and Lemma \ref{x8z}(ii)-(iii)(see \cite[Out(158), Out(161)]{mc6}), we have
$$H_Z c_2(Z)= \frac{d}{840}(d-1)(2d-1)(3d-1)(5209-12778d+10429d^2-3712d^3+492d^4)$$
and
$$K_Z^2H_Z=\frac{d}{840}(d-1)(2d-1)(3d-1)(4d-13)(-839+1749d-1046d^2+216d^3).$$
Using the above and Lemma \ref{x8z}(iii) we find (see \cite[Out(164)]{mc6}),
$$K_Z c_2(Z)=\frac{d}{420}(d-1)(2d-1)(3d-1)(-34261+96399d-96765d^2+47319d^3-11444d^4+1092d^5).$$
Since $\chi(\O_Z)=-\frac{1}{24}K_Z c_2(Z)$, equating with \eqref{chi80} (see \cite[Out(166)]{mc6}) gives the contradiction
$$(-1+d)d(1+d)(-1+2d)(1+2d)(-1+3d)(1+3d)(-1+6d)(1+6d)=0.$$
Finally, suppose that $r=7$.

Arguing as in the beginning of the case $r=4$, we can assume that $n=8$ and that $X$ satisfies either (a) or (b) of Corollary \ref{ce}.

From \eqref{res7}, setting $u=\frac{7}{2}(d-1)$, we find
$$0 \to \O_X(m-u)^{\oplus 6} \to \E(m-u)^{\oplus 5} \to (\Lambda^2 \E)(m-u)^{\oplus 4} \to (\Lambda^3 \E)(m-u)^{\oplus 3} \to $$
$$ \hskip 4.5cm \to (\Lambda^4 \E)(m-u)^{\oplus 2} \to (\Lambda^5 \E)(m-u)\to \I_{Z/X}(m) \to 0.$$
Using the above, \eqref{chi8} and Lemma \ref{ulr}(viii) one gets
\begin{equation}
\label{chiz7}
\begin{aligned}[t] 
\chi(\O_Z(m)) & = \binom{m+9}{9}-\binom{m-d+9}{9}-\chi((\Lambda^5 \E)(m-u))+2\chi((\Lambda^4 \E)(m-u))-3\chi((\Lambda^3 \E)(m-u))\\
& +4\chi((\Lambda^2 \E)(m-u)) -35d\binom{m-u+8}{8}+6\binom{m-u+9}{9}-6\binom{m-u-d+9}{9}.
\end{aligned}
\end{equation}
Using Lemmas \ref{xn}, \ref{xne}, the expressions of $\chi((\Lambda^i \E)(m-u)), 2 \le i \le 5$ are computed in the Appendix, Lemma \ref{suz7}. Setting $m=0$ in \eqref{chiz7}, we find (see \cite[Out(115)]{mc7}),
\begin{equation}
\label{chi801}
\begin{aligned}[t] 
\chi(\O_Z)=\frac{d(d-1)}{28665446400}(& -22024437079+208787633321d-751494758379d^2+1321535623701d^3\\
& -1237566062181d^4+646601246619d^5-177940027481d^6\\
& +19863510439d^7).
\end{aligned}
\end{equation}
Similarly one gets (see \cite[Out(117)]{mc7})
$$\begin{aligned}[t]
\chi(\O_Z(1))=\frac{d(d-1)}{28665446400}(& -29037317719+272178069161d-963544031979d^2+1653635796501d^3\\
& -1495712707941d^4+747580244379d^5-193219521881d^6+19863510439d^7).
\end{aligned}$$
Using \eqref{hk} and Lemma \ref{x8z}(v), we get (see \cite[Out(121)]{mc7}),
$$K_Z H_Z=\frac{d(d-1)}{414720}(189082-1714239d+5760375d^2-9085050d^3+7138668d^4-2834631d^5+442115d^6).$$
Now, using the above and Lemma \ref{x8z}(v)-(vii)(see \cite[Out(123), Out(125)]{mc7}) we find
$$K_Z^2=\frac{d(d-1)}{184320}(d-3)(382729-3493098d+11828355d^2-18805500d^3+14902671d^4-6006042d^5+983525d^6)$$
and
$$\begin{aligned}[t]
c_2(Z)=\frac{d(d-1)}{9953280}(& -29766391+283399229d-1026407283d^2+1821176337d^3-1726796469d^4\\
& +916447911d^5-257756897d^6+29656843d^7).
\end{aligned}$$
Since $\chi(\O_Z)=\frac{1}{12}(K_Z^2+c_2(Z))$, equating with \eqref{chi801} (see \cite[Out(127)]{mc7}), one gets the contradiction 
$$d(-1+d)(1+d)(-1+7d)(1+7d)(281-4210d^2+12569d^4)=0.$$
This concludes the proof of the theorem.
\end{proof}
\renewcommand{\proofname}{Proof}

\begin{remark}
\label{funz}
In the above calculations, we used the formulas $\chi(\O_Z)=\frac{1}{12}(c_1(Z)^2+c_2(Z))$ for a smooth surface $Z$ and
$\chi(\O_Z)=\frac{1}{24}c_1(Z)c_2(Z)$ for a smooth threefold $Z$. We point out that these formulas make sense even though $Z$ might be disconnected (and so do the formulas \eqref{hk}, \eqref{kh2}, \eqref{kh2+hc_2} and the other formulas used).
\end{remark}
As a matter of fact, let $Z = Z_1 \sqcup \ldots \sqcup Z_s$ be the decomposition into connected components and let $j_k : Z_k \hookrightarrow Z$ be the inclusion, for $1 \le k \le s$. 

We have that $\O_Z \cong \O_{Z_1} \oplus \ldots \oplus \O_{Z_s}$ and, by \cite[Rmk.~II.8.9.2]{ha}, $T_Z \cong T_{Z_1} \oplus \ldots \oplus T_{Z_s}$. Therefore $\chi(\O_Z)=\sum_{i=1}^s \chi(\O_{Z_i})$. So, for example if $Z$ is a smooth surface, we get 
\begin{equation}
\label{chirr}
\chi(\O_Z)=\frac{1}{12}\sum\limits_{i=1}^s (c_1(Z_i)^2+c_2(Z_i))=\frac{1}{12}(\sum\limits_{i=1}^s c_1(Z_i)^2+\sum\limits_{i=1}^s c_2(Z_i)).
\end{equation}
On the other hand, consider for $p=1, 2$, the isomorphism $H^{2p}(Z,\Z) \cong H^{2p}(Z_1,\Z) \oplus \ldots \oplus H^{2p}(Z_s,\Z)$ given by $j_1^* \oplus \ldots \oplus j_s^*$. Then
\begin{equation}
\label{che}
\begin{aligned}[t]
c_p(Z)& = c_p(T_Z)= j_1^*c_p(T_Z)+ \ldots + j_s^* c_p(T_Z)=c_p(j_1^*T_Z) + \ldots + c_p(j_s^*T_Z)=c_p(T_{Z_1}) + \ldots + c_p(T_{Z_s})=\\
& = c_p(Z_1) + \ldots + c_p(Z_s).
\end{aligned}
\end{equation}
Since, by definition of cup product, we have that $\alpha \beta = 0$ if $\alpha \in H^{2p}(Z_k,\Z), \beta \in H^{2q}(Z_h,\Z)$ with $k \ne h$, we deduce that 
$$c_1(Z)^2=c_1(Z_1)^2 + \ldots + c_1(Z_s)^2$$
and therefore, by \eqref{chirr} and \eqref{che},
$$\chi(\O_Z)=\frac{1}{12}(\sum\limits_{i=1}^s c_1(Z_i)^2+\sum\limits_{i=1}^s c_2(Z_i))=\frac{1}{12}(c_1(Z)^2+c_2(Z)).$$
A similar calculation can be done when $Z$ is a smooth threefold or for the other formulas.

\section*{Acknowledgement}
We thank Prof. Marco Pedicini (Roma Tre University) for the help in programming the (long) case $r=7$ \cite{mc7}.

\eject

\appendix

\section{Chern classes of exterior powers}
\label{app1}

\begin{lemma} 
\label{w4}
Let $\F$ be a rank $4$ vector bundle on a smooth variety $X$. Then:
\begin{enumerate}
\item $c_1(\Lambda^2 \F)=3c_1(\F)$.
\item $c_2(\Lambda^2 \F)=3c_1(\F)^2+2c_2(\F)$.
\item $c_3(\Lambda^2 \F)=c_1(\F)^3+4c_1(\F)c_2(\F)$.
\item $c_4(\Lambda^2 \F)=2c_1(\F)^2 c_2(\F)+c_2(\F)^2+c_1(\F)c_3(\F)-4c_4(\F)$.
\item $c_5(\Lambda^2 \F)=c_1(\F)c_2(\F)^2+c_1(\F)^2 c_3(\F)-4c_1(\F)c_4(\F)$.
\item $c_6(\Lambda^2 \F)=c_1(\F)c_2(\F)c_3(\F)-c_3(\F)^2-c_1(\F)^2 c_4(\F)$.
\end{enumerate}
\end{lemma}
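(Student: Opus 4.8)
The plan is to deduce everything from the splitting principle together with elementary symmetric-function manipulations, with no geometric input beyond the splitting principle itself. By the usual reduction we may assume $\F = L_1 \oplus L_2 \oplus L_3 \oplus L_4$ with $a_i := c_1(L_i)$, so that $c_k(\F) = e_k(a_1,a_2,a_3,a_4)$, the $k$-th elementary symmetric polynomial in the $a_i$. Then $\Lambda^2 \F \cong \bigoplus_{1 \le i < j \le 4}(L_i \otimes L_j)$ is a rank $6$ bundle whose Chern roots are the six sums $a_i + a_j$; hence
\[
c(\Lambda^2 \F) = \prod_{1 \le i < j \le 4}\bigl(1 + a_i + a_j\bigr),
\]
and $c_k(\Lambda^2\F)$ is the $k$-th elementary symmetric polynomial $\sigma_k$ of the multiset $\{a_i+a_j : i<j\}$. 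Each $\sigma_k$ is symmetric in $a_1,\dots,a_4$, so by the fundamental theorem of symmetric polynomials it is a universal polynomial with integer coefficients in $e_1,\dots,e_4$; substituting $e_k = c_k(\F)$ then gives formulas valid for an arbitrary rank $4$ bundle.

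It remains to identify the six polynomials. For $k=1$ every $a_i$ appears in exactly three of the six pairs, so $\sigma_1 = 3(a_1+\dots+a_4) = 3e_1$, which is (1) (equivalently $\det(\Lambda^2\F) = (\det\F)^{\otimes 3}$). For $k=2,3$ one expands $\sigma_2$ and $\sigma_3$ directly and collects monomials; this is short and yields $3e_1^2 + 2e_2$ and $e_1^3 + 4e_1 e_2$, i.e. (2) and (3). For $k = 4,5,6$ the same expansion works but is heavier; here it is convenient to exploit the perfect pairing $\Lambda^2\F \otimes \Lambda^2\F \to \Lambda^4\F = \det\F$, which yields an isomorphism $\Lambda^2\F \cong (\Lambda^2\F)^* \otimes \det\F$. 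Comparing Chern classes on the two sides produces linear relations among the $c_i(\Lambda^2\F)$ that both cut down the number of terms one must expand by hand and serve as a consistency check on the final formulas. Alternatively, and most simply, one just expands $\prod_{i<j}(1+a_i+a_j)$ symbolically and reduces modulo the relations expressing power sums in terms of $e_1,\dots,e_4$ — the same kind of symmetric-function computation carried out elsewhere in the paper.

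Finally, the top coefficient $c_6(\Lambda^2\F) = \sigma_6 = \prod_{1 \le i<j\le 4}(a_i+a_j)$ is the most delicate: it is a symmetric polynomial of degree $6$ in four variables, and rewriting it in the monomial basis and converting to the $e_i$ gives $e_1 e_2 e_3 - e_3^2 - e_1^2 e_4$, which is (6) (one can sanity-check, e.g., by specializing $a_1=a_2=a_3=a_4=1$, where both sides equal $64$). The only genuine obstacle in the whole argument is carrying out this elementary symmetric-function bookkeeping for $k=4,5,6$ without arithmetic slips; there is no conceptual difficulty once the splitting principle is in place.
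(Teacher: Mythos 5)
Your argument is correct, and it is in substance the same as the paper's: the paper's ``proof'' of this lemma is simply a citation to symbolic Mathematica computations (Out(80)--Out(90) in [M1]), which carry out exactly the splitting-principle/elementary-symmetric-function expansion of $\prod_{i<j}(1+a_i+a_j)$ that you describe doing by hand. Your formulas check out (e.g.\ specializing the Chern roots to $(1,1,1,0)$ gives $\sigma_4=66$, $\sigma_5=36$, $\sigma_6=8$, matching (4)--(6)), so the only difference is that you perform the routine bookkeeping manually where the authors delegate it to a computer.
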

\begin{proof}
See Out(80), Out(82), Out(84), Out(86), Out(88), Out(90) in \cite{mc4}. 
\end{proof}

\begin{lemma} 
\label{w5}
Let $\F$ be a rank $5$ vector bundle on a smooth variety $X$. Then:
\begin{enumerate}
\item $c_1(\Lambda^2 \F)=4c_1(\F)$.
\item $c_2(\Lambda^2 \F)=6c_1(\F)^2+3c_2(\F)$.
\item $c_3(\Lambda^2 \F)=4c_1(\F)^3+9c_1(\F)c_2(\F)+c_3(\F)$.
\item $c_4(\Lambda^2 \F)=c_1(\F)^4+9c_1(\F)^2 c_2(\F)+3c_2(\F)^2+4c_1(\F)c_3(\F)-3c_4(\F)$.
\item $c_5(\Lambda^2 \F)=3c_1(\F)^3 c_2(\F)+6c_1(\F)c_2(\F)^2+5c_1(\F)^2 c_3(\F)+2c_2(\F)c_3(\F)-5c_1(\F)c_4(\F)-11c_5(\F)$.
\item $\begin{aligned}[t]
c_6(\Lambda^2 \F)=& \ 3c_1(\F)^2 c_2(\F)^2+c_2(\F)^3+2c_1(\F)^3 c_3(\F)+6c_1(\F)c_2(\F)c_3(\F)-c_3(\F)^2\\
& -2c_1(\F)^2 c_4(\F)-2c_2(\F)c_4(\F)-22c_1(\F)c_5(\F).
\end{aligned}$
\end{enumerate}
\end{lemma}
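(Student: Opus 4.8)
The plan is to compute everything through the splitting principle. Formally factor $c(\F) = \prod_{i=1}^{5}(1 + a_i)$, so that $c_k(\F) = e_k(a_1,\dots,a_5)$ is the $k$-th elementary symmetric polynomial in the Chern roots. The Chern roots of $\Lambda^2 \F$ are the $\binom{5}{2} = 10$ pairwise sums $a_i + a_j$ with $1 \le i < j \le 5$, hence
\[
c(\Lambda^2 \F) = \prod_{1 \le i < j \le 5}\bigl(1 + (a_i + a_j)\bigr),
\]
and $c_k(\Lambda^2 \F) = e_k\bigl(\{a_i+a_j\}_{1 \le i<j \le 5}\bigr)$ for each $k$. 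Each such expression is a symmetric polynomial in $a_1,\dots,a_5$, so by the fundamental theorem of symmetric functions it equals a universal polynomial with integer coefficients in $e_1,\dots,e_5 = c_1(\F),\dots,c_5(\F)$; rewriting it in that form yields formulas (1)--(6). Since these are polynomial identities among Chern classes that hold in the universal situation (e.g. on a suitable product of Grassmannians, or on a classifying space), they hold on every smooth variety.

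Concretely, I would dispatch the low-degree cases first: each $a_i$ occurs in exactly $4$ of the $10$ pairs, so $c_1(\Lambda^2\F) = \sum_{i<j}(a_i+a_j) = 4\sum_i a_i = 4c_1(\F)$, which is (1), and a short computation with Newton's identities gives (2) and (3). For the remaining classes it is cleanest to pass through power sums: using the binomial expansion one has $\sum_{i,j}(a_i+a_j)^m = \sum_{t=0}^{m} \binom{m}{t} p_t p_{m-t}$ with $p_m = \sum_{i=1}^{5} a_i^m$ and $p_0 = 5$, hence $\sum_{i<j}(a_i+a_j)^m = \tfrac12\sum_{i,j}(a_i+a_j)^m - 2^{m-1}p_m$; running Newton's identities on the $10$ roots $a_i+a_j$ then produces $e_k\bigl(\{a_i+a_j\}\bigr)$ from these power sums, and a final substitution converts the $p_m$ back to $c_1(\F),\dots,c_5(\F)$.

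The only genuine obstacle is the size of the bookkeeping for $c_5(\Lambda^2\F)$ and $c_6(\Lambda^2\F)$, which are symmetric polynomials of degree $5$ and $6$ in $10$ variables whose intermediate expressions, before reduction to $c_1(\F),\dots,c_5(\F)$, are large. I would therefore carry out the final reduction with a computer algebra system rather than by hand --- this is exactly what the output files \cite{mc5} record, paralleling the rank-$4$ computation in \cite{mc4} used for Lemma \ref{w4} --- but no conceptual difficulty arises, and the argument is structurally identical to the rank-$4$ case.
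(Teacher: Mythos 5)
Your proposal is correct and is essentially the paper's own route: the paper proves this lemma simply by citing the Mathematica outputs Out(20)--Out(30) in \cite{mc5}, i.e.\ a universal Chern-root (splitting principle) computation for $\Lambda^2\F$ carried out by computer algebra, which is precisely the framework and reduction you describe. Your low-degree checks and the power-sum identity $\sum_{i<j}(a_i+a_j)^m=\tfrac12\sum_{i,j}(a_i+a_j)^m-2^{m-1}p_m$ are correct, so nothing further is needed.
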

\begin{proof}
See Out(20), Out(22), Out(24), Out(26), Out(28), Out(30) in \cite{mc5}. 
\end{proof}

\begin{lemma}
\label{w6}
Let $\F$ be a rank $6$ vector bundle on a smooth variety $X$. Then:
\begin{enumerate}
\item $c_1(\Lambda^2 \F)=5c_1(\F)$.
\item $c_2(\Lambda^2 \F)=10c_1(\F)^2+4c_2(\F)$.
\item $c_3(\Lambda^2 \F)=10c_1(\F)^3+16c_1(\F)c_2(\F)+2c_3(\F)$.
\item $c_4(\Lambda^2 \F)=5c_1(\F)^4+24c_1(\F)^2 c_2(\F)+6c_2(\F)^2+9c_1(\F)c_3(\F)-2c_4(\F)$.
\item $\begin{aligned}[t] 
c_5(\Lambda^2 \F)=& \ c_1(\F)^5+16c_1(\F)^3 c_2(\F)+18c_1(\F)c_2(\F)^2+15c_1(\F)^2 c_3(\F)+6c_2(\F)c_3(\F) \\
& -4c_1(\F)c_4(\F)-10c_5(\F).
\end{aligned}$
\item $\begin{aligned}[t]
c_6(\Lambda^2 \F)=& \ 4c_1(\F)^4 c_2(\F)+18c_1(\F)^2 c_2(\F)^2+4c_2(\F)^3+11c_1(\F)^3 c_3(\F)+21c_1(\F)c_2(\F) c_3(\F)\\
& -c_1(\F)^2 c_4(\F)-2c_2(\F)c_4(\F)-29c_1(\F)c_5(\F)-26c_6(\F).
\end{aligned}$
\item $\begin{aligned}[t]
c_7(\Lambda^2 \F)=& \ 6c_1(\F)^3 c_2(\F)^2 +8c_1(\F)c_2(\F)^3+3c_1(\F)^4 c_3(\F)+24c_1(\F)^2 c_2(\F)c_3(\F)\\
& +6c_2(\F)^2 c_3(\F)+3c_1(\F)c_3(\F)^2+2c_1(\F)^3 c_4(\F)+2c_1(\F)c_2(\F)c_4(\F)\\
& -6c_3(\F)c_4(\F)-32c_1(\F)^2 c_5(\F)-12c_2(\F)c_5(\F)-78c_1(\F)c_6(\F).
\end{aligned}$ 
\item $\begin{aligned}[t]
c_8(\Lambda^2 \F)=& \ 4c_1(\F)^2 c_2(\F)^3+c_2(\F)^4+9c_1(\F)^3 c_2(\F)c_3(\F)+15c_1(\F)c_2(\F)^2 c_3(\F)\\
& +6c_1(\F)^2 c_3(\F)^2+c_1(\F)^4 c_4(\F)+8c_1(\F)^2 c_2(\F)c_4(\F)+2c_2(\F)^2 c_4(\F)\\
& -8c_1(\F)c_3(\F)c_4(\F)-7c_4(\F)^2-16c_1(\F)^3 c_5(\F)-26c_1(\F)c_2(\F)c_5(\F)\\
& -3c_3(\F)c_5(\F)-94c_1(\F)^2 c_6(\F)-24c_2(\F)c_6(\F).
\end{aligned}$ 
\item $c_1(\Lambda^3 \F)=10 c_1(\F)$.
\item $c_2(\Lambda^3 \F)=45c_1(\F)^2+6c_2(\F)$.
\item $c_3(\Lambda^3 \F)=120c_1(\F)^3+54c_1(\F)c_2(\F)$.
\item $c_4(\Lambda^3 \F)=210c_1(\F)^4+216c_1(\F)^2 c_2(\F)+15c_2(\F)^2+3c_1(\F)c_3(\F)-6c_4(\F)$.
\item $c_5(\Lambda^3 \F)=252c_1(\F)^5+504c_1(\F)^3 c_2(\F)+120c_1(\F)c_2(\F)^2+24c_1(\F)^2 c_3(\F)-48c_1(\F)c_4(\F)$.
\item $\begin{aligned}[t]
c_6(\Lambda^3 \F)=& \ 210c_1(\F)^6+756c_1(\F)^4 c_2(\F)+420c_1(\F)^2 c_2(\F)^2+20c_2(\F)^3+84c_1(\F)^3 c_3(\F)\\
& +15c_1(\F)c_2(\F)c_3(\F)-3c_3(\F)^2-169c_1(\F)^2 c_4(\F)-22c_2(\F)c_4(\F)\\
& -11c_1(\F)c_5(\F)+66c_6(\F).
\end{aligned}$
\item $\begin{aligned}[t]
c_7(\Lambda^3 \F)=& \ 120c_1(\F)^7+756c_1(\F)^5 c_2(\F)+840c_1(\F)^3 c_2(\F)^2+140c_1(\F)c_2(\F)^3+168c_1(\F)^4 c_3(\F)\\
& +105c_1(\F)^2 c_2(\F)c_3(\F)-21c_1(\F)c_3(\F)^2-343c_1(\F)^3 c_4(\F)-154c_1(\F)c_2(\F)c_4(\F)\\
& -77c_1(\F)^2 c_5(\F)+462c_1(\F)c_6(\F).
\end{aligned}$
\item $\begin{aligned}[t]
c_8(\Lambda^3 \F)=& \ 45c_1(\F)^8+504c_1(\F)^6 c_2(\F)+1050c_1(\F)^4 c_2(\F)^2+420c_1(\F)^2 c_2(\F)^3+15 c_2(\F)^4\\
& +210 c_1(\F)^5 c_3(\F)+315c_1(\F)^3 c_2(\F)c_3(\F)+30c_1(\F)c_2(\F)^2 c_3(\F)-60c_1(\F)^2 c_3(\F)^2\\
& -12c_2(\F)c_3(\F)^2-441c_1(\F)^4 c_4(\F)-465c_1(\F)^2 c_2(\F)c_4(\F)-28c_2(\F)^2 c_4(\F)\\
& -13c_1(\F)c_3(\F)c_4(\F)+c_4(\F)^2-234c_1(\F)^3 c_5(\F) -47c_1(\F)c_2(\F)c_5(\F)\\
& +36c_3(\F)c_5(\F)+1444c_1(\F)^2 c_6(\F)+138c_2(\F)c_6(\F).
\end{aligned}$
\end{enumerate}
\end{lemma}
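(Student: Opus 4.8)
The plan is to reduce the whole statement to a computation with symmetric functions via the splitting principle. By the usual argument we may assume that $\F = L_1 \oplus \cdots \oplus L_6$ is a direct sum of line bundles; set $a_i = c_1(L_i)$, so that $c_k(\F) = e_k(a_1,\dots,a_6)$, the $k$-th elementary symmetric polynomial, and $e_k = 0$ for $k \ge 7$. Then $\Lambda^2\F = \bigoplus_{1\le i<j\le 6} L_i\otimes L_j$ has Chern roots $\{a_i+a_j\}_{i<j}$ and $\Lambda^3\F = \bigoplus_{1\le i<j<k\le 6} L_i\otimes L_j\otimes L_k$ has Chern roots $\{a_i+a_j+a_k\}_{i<j<k}$, so the total Chern classes are
\[
c(\Lambda^2\F)=\prod_{1\le i<j\le 6}\!\bigl(1+a_i+a_j\bigr),\qquad c(\Lambda^3\F)=\prod_{1\le i<j<k\le 6}\!\bigl(1+a_i+a_j+a_k\bigr).
\]

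Next I would extract, for each $p = 1,\dots,8$, the degree-$p$ homogeneous part of these products. Each such part is a symmetric polynomial in $a_1,\dots,a_6$ and hence rewrites uniquely as a polynomial in $e_1,\dots,e_6$; performing that rewriting (by Newton-type relations, or by expanding in the monomial symmetric basis and matching coefficients) and substituting back $e_k = c_k(\F)$ yields the claimed formulas. The first couple of degrees can be checked by hand: e.g.\ each $a_i$ lies in $5$ of the $15$ unordered pairs, so $c_1(\Lambda^2\F)=5\sum_i a_i = 5c_1(\F)$, and in $\binom{5}{2}=10$ of the $20$ triples, so $c_1(\Lambda^3\F)=10c_1(\F)$; squaring $\sum_{i<j}(a_i+a_j)$ and using $\sum_{i<j}(a_i+a_j)^2 = 5(c_1^2-2c_2)+2c_2$ gives $c_2(\Lambda^2\F)=10c_1(\F)^2+4c_2(\F)$, and so on.

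The content of the lemma is therefore entirely bookkeeping: the main obstacle is that in degrees $6$, $7$, $8$ the product of $15$ (respectively $20$) linear forms has a very large expansion, and converting it to the $e_k$-basis by hand is impractical and error-prone. I would run this expansion through a computer algebra system, which is the route already used for the rank-$4$ and rank-$5$ analogues in Lemmas \ref{w4} and \ref{w5}; the relevant outputs are recorded in \cite{mc6}. No geometric input beyond the splitting principle is required, and the same scheme simultaneously produces both the $\Lambda^2$ and the $\Lambda^3$ lists.
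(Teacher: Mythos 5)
Your proposal is correct and matches the paper's route: the paper's proof of this lemma is precisely a citation of the Mathematica outputs in \cite{mc6}, which implement the same splitting-principle computation (Chern roots $a_i+a_j$ for $\Lambda^2\F$ and $a_i+a_j+a_k$ for $\Lambda^3\F$, expanded and rewritten in the elementary symmetric basis). Your hand-checks of the low-degree cases are also accurate, so nothing further is needed.
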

\begin{proof}
See Out(51), Out(53), Out(55), Out(57), Out(59), Out(61), Out(63), Out(65), Out(76), Out(78), Out(80), Out(82), Out(84), Out(86), Out(88), Out(90) in \cite{mc6}. 
\end{proof}

\begin{lemma}
\label{w7}
Let $\F$ be a rank $7$ vector bundle on a smooth variety $X$. Then,
\begin{enumerate}
\item $c_1(\Lambda^2 \F)=6c_1(\F)$.
\item $c_2(\Lambda^2 \F)=15c_1(\F)^2+5c_2(\F)$.
\item $c_3(\Lambda^2 \F)=20c_1(\F)^3+25c_1(\F)c_2(\F)+3c_3(\F)$.
\item $c_4(\Lambda^2 \F)=15c_1(\F)^4+50c_1(\F)^2 c_2(\F)+10c_2(\F)^2+16c_1(\F)c_3(\F)-c_4(\F)$.
\item $\begin{aligned}[t] c_5(\Lambda^2 \F) = & \ 6 c_1(\F)^5+50c_1(\F)^3 c_2(\F)+40c_1(\F)c_2(\F)^2+34c_1(\F)^2 c_3(\F)+ 12c_2(\F)c_3(\F)\\
& -c_1(\F)c_4(\F)-9c_5(\F).
\end{aligned}$
\item $\begin{aligned}[t]
c_6(\Lambda^2 \F)=& \ c_1(\F)^6+25c_1(\F)^4 c_2(\F)+60c_1(\F)^2 c_2(\F)^2+10c_2(\F)^3+36c_1(\F)^3 c_3(\F)\\
& +52c_1(\F)c_2(\F)c_3(\F)+2c_3(\F)^2+5c_1(\F)^2 c_4(\F)-34c_1(\F)c_5(\F)-25c_6(\F).
\end{aligned}$
\item $\begin{aligned}[t]
c_7(\Lambda^2 \F)=& \ 5c_1(\F)^5 c_2(\F)+40c_1(\F)^3 c_2(\F)^2+30c_1(\F)c_2(\F)^3+19c_1(\F)^4 c_3(\F)\\
& +84c_1(\F)^2 c_2(\F)c_3(\F)+18c_2(\F)^2 c_3(\F)+12c_1(\F)c_3(\F)^2+11c_1(\F)^3 c_4(\F)\\
& +12c_1(\F)c_2(\F)c_4(\F)-6c_3(\F)c_4(\F)-51c_1(\F)^2 c_5(\F)-18c_2(\F)c_5(\F)\\
& -99c_1(\F)c_6(\F)-57c_7(\F).\\
\end{aligned}$ 
\item $\begin{aligned}[t]
c_8(\Lambda^2 \F)=& \ 10c_1(\F)^4 c_2(\F)^2+30c_1(\F)^2 c_2(\F)^3+5c_2(\F)^4+4c_1(\F)^5 c_3(\F)+60c_1(\F)^3 c_2(\F)c_3(\F)\\
& +60c_1(\F)c_2(\F)^2 c_3(\F)+24c_1(\F)^2 c_3(\F)^2+6c_2(\F)c_3(\F)^2+8c_1(\F)^4 c_4(\F)\\
& +33c_1(\F)^2 c_2(\F)c_4(\F)+6c_2(\F)^2 c_4(\F)-9c_1(\F)c_3(\F)c_4(\F)-9c_4(\F)^2\\
& -38c_1(\F)^3 c_5(\F)-51c_1(\F)c_2(\F)c_5(\F)-11c_3(\F)c_5(\F)-162c_1(\F)^2 c_6(\F)\\
& -46c_2(\F)c_6(\F)-228c_1(\F)c_7(\F).\\
\end{aligned}$. 
\item $c_1(\Lambda^3 \F)=15c_1(\F)$.
\item $c_2(\Lambda^3 \F)=105c_1(\F)^2+10c_2(\F)$.
\item $c_3(\Lambda^3 \F)=455c_1(\F)^3+140c_1(\F)c_2(\F)+2c_3(\F)$.
\item $c_4(\Lambda^3 \F)=1365c_1(\F)^4+910c_1(\F)^2 c_2(\F)+45c_2(\F)^2+32c_1(\F)c_3(\F)-8c_4(\F)$.
\item $\begin{aligned}[t]
c_5(\Lambda^3 \F)=& \ 3003c_1(\F)^5+3640c_1(\F)^3 c_2(\F)+585c_1(\F)c_2(\F)^2+234c_1(\F)^2 c_3(\F)+18c_2(\F)c_3(\F)\\
& -102c_1(\F)c_4(\F)-10c_5(\F).
\end{aligned}$
\item $\begin{aligned}[t]
c_6(\Lambda^3 \F)=& \ 5005c_1(\F)^6+10010c_1(\F)^4 c_2(\F)+3510c_1(\F)^2 c_2(\F)^2+120c_2(\F)^3+1040c_1(\F)^3 c_3(\F)\\
& +270c_1(\F)c_2(\F)c_3(\F)-3c_3(\F)^2-600c_1(\F)^2 c_4(\F)-60c_2(\F)c_4(\F)-140c_1(\F)c_5(\F) \\
& +40c_6(\F).\\
\end{aligned}$
\item $\begin{aligned}[t]
c_7(\Lambda^3 \F)=& \ 6435c_1(\F)^7+20020c_1(\F)^5 c_2(\F)+12870c_1(\F)^3 c_2(\F)^2+1440c_1(\F)c_2(\F)^3\\
& +3146c_1(\F)^4 c_3(\F)+1836c_1(\F)^2 c_2(\F)c_3(\F)+72c_2(\F)^2 c_3(\F)-27c_1(\F)c_3(\F)^2\\
& -2156c_1(\F)^3 c_4(\F)-702c_1(\F)c_2(\F)c_4(\F)-18c_3(\F)c_4(\F)-904c_1(\F)^2 c_5(\F)\\
& -72c_2(\F)c_5(\F)+454c_1(\F)c_6(\F)+302c_7(\F).\\
\end{aligned}$
\item $\begin{aligned}[t]
c_8(\Lambda^3 \F)=& \ 6435c_1(\F)^8+30030c_1(\F)^6 c_2(\F)+32175c_1(\F)^4 c_2(\F)^2+7920c_1(\F)^2 c_2(\F)^3\\
& +210c_2(\F)^4+6864c_1(\F)^5 c_3(\F)+7524c_1(\F)^3 c_2(\F)c_3(\F)+1008c_1(\F)c_2(\F)^2 c_3(\F)\\
& -84c_1(\F)^2 c_3(\F)^2-24c_2(\F)c_3(\F)^2-5280c_1(\F)^4 c_4(\F)-3759c_1(\F)^2 c_2(\F)c_4(\F)\\
& -192c_2(\F)^2 c_4(\F)-237c_1(\F)c_3(\F)c_4(\F)+6c_4(\F)^2-3570c_1(\F)^3 c_5(\F)\\
& -951c_1(\F)c_2(\F)c_5(\F)+33c_3(\F)c_5(\F)+2370c_1(\F)^2 c_6(\F)+222c_2(\F)c_6(\F)\\
\end{aligned}$

$\begin{aligned}[t]
& \hskip 1.4cm + 3624 c_1(\F) c_7(\F).\\
\end{aligned}$
\end{enumerate}
\end{lemma}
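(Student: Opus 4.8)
The statement is a formal identity among the Chern classes of $\F$, valid on an arbitrary smooth variety, so the plan is to establish it via the splitting principle followed by a symmetric-function computation. First I would reduce to the case $\F = L_1 \oplus \cdots \oplus L_7$ with $a_i := c_1(L_i)$, so that $c_\ell(\F) = e_\ell(a_1, \ldots, a_7)$, the $\ell$-th elementary symmetric polynomial. Since $\Lambda^2 \F$ is then the direct sum of the $\binom{7}{2} = 21$ line bundles $L_i \otimes L_j$ ($i<j$) and $\Lambda^3 \F$ the direct sum of the $\binom{7}{3} = 35$ line bundles $L_i \otimes L_j \otimes L_k$ ($i<j<k$), we obtain
$$c(\Lambda^2 \F) = \prod_{1 \le i < j \le 7}(1 + a_i + a_j), \qquad c(\Lambda^3 \F) = \prod_{1 \le i < j < k \le 7}(1 + a_i + a_j + a_k).$$

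Next I would expand these products, pick out the degree-$m$ part for $1 \le m \le 8$ --- this is all that is needed, since in the applications $\F$ will be restricted to a variety of dimension at most $8$, and the Chern classes of $\Lambda^2 \F$, $\Lambda^3 \F$ in degrees $>8$ play no role --- and then rewrite each resulting symmetric polynomial in $a_1, \ldots, a_7$ in terms of $c_1(\F), \ldots, c_7(\F)$ by the fundamental theorem of symmetric polynomials. The cleanest route is through logarithms: $\log c(\Lambda^2 \F) = \sum_{i<j} \log(1 + a_i + a_j)$ reduces the problem to the sums $\sum_{i<j}(a_i + a_j)^n$, which the binomial theorem expresses through the ordinary power sums $\sum_i a_i^t$ of $\F$; Newton's identities then convert these into polynomials in the $c_\ell(\F)$, and exponentiating and truncating at degree $8$ delivers (1)--(8). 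The identical scheme applied to $\sum_{i<j<k}(a_i + a_j + a_k)^n$ --- expanded by the multinomial theorem --- yields (9)--(16). Alternatively one can use $\ch(\Lambda^2 \F) = \tfrac{1}{2}\bigl((\ch \F)^2 - \psi^2(\ch \F)\bigr)$ and $\ch(\Lambda^3 \F) = \tfrac{1}{6}\bigl((\ch \F)^3 - 3\,(\ch \F)\,\psi^2(\ch \F) + 2\,\psi^3(\ch \F)\bigr)$, with $\psi^k$ rescaling the degree-$j$ component of the Chern character by $k^j$, and translate between $\ch$ and $c$ in the usual way.

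There is no geometric obstacle: everything rests on the splitting principle and elementary symmetric-function identities. The genuine difficulty is purely the size of the computation --- expanding a product of $35$ linear forms and simplifying symmetric polynomials of degree up to $8$ in $7$ variables, then converting to the $c_\ell$-basis, is not feasible by hand. I would therefore carry out this bookkeeping with a computer algebra system, which is precisely the computation recorded in \cite{mc7}, and simply read off the sixteen formulas (1)--(16) from its output.
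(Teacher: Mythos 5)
Your proposal is correct and follows essentially the same route as the paper: the paper's proof is simply a citation of the Mathematica output in \cite{mc7}, which carries out exactly the splitting-principle/symmetric-function computation (equivalently, the Chern-character identities with Adams operations) that you describe, truncated in the degrees needed. Your write-up just makes the underlying mathematical skeleton explicit before handing the bookkeeping to the computer algebra system.
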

\begin{proof}
See Out(11), Out(13), Out(15), Out(17), Out(19), Out(21), Out(23), Out(25), Out(38), Out(40), Out(42), Out(44), Out(46), Out(48), Out(50), Out(52) in \cite{mc7}.
\end{proof}

\section{General Riemann-Roch calculations}
\label{app2}

We first compute the Todd class of a given variety up to degree $8$.

\begin{lemma} 
\label{td}
Let $X$ be a smooth irreducible variety and set $c_i=c_i(X)$. Then we have:

$\begin{aligned}[t] 
{\rm Td}(T_X) &=1+\frac{1}{2}c_1+\frac{1}{12}(c_1^2+c_2)+\frac{1}{24}c_1c_2-\frac{1}{720}(c_1^4-4c_1^2c_2-3c_2^2-c_1c 3+c_4)\\
& -\frac{1}{1440}(c_1^3c_2-3c_1c_2^2-c_1^2c_3+c_1c_4)\\
& +\frac{1}{60480}(2c_1^6-12c_1^4c_2+11c_1^2c_2^2+10c_2^3+5c_1^3c_3+11c_1c_2c_3-c_3^2-5c_1^2c_4-9c_2c_4-2c_1c_5\\
& \hskip 1.6cm +2c_6)\\
& +\frac{1}{120960}(2c_1^5c_2-10c_1^3c_2^2+10c_1c_2^3-2c_1^4c_3+11c_1^2c_2c_3-c_1c_3^2+2c_1^ 3c_4-9c_1c_2c_4-2c_1^2c_5\\
& \hskip 1.8cm +2c_1c_6)\\
 & -\frac{1}{3628800}(3c_1^8-24c_1^6c_2+50c_1^4c_2^2-8c_1^2c_2^3-21c_2^4+14c_1^5c_3-26c_1^3c_2c_3-50c_1c_2^2c_3-3c_1^2c_3^2\\
& \hskip 1.9cm +8c_2c_3^2-14c_1^4c_4+19c_1^2c_2c_4+34c_2^2c_4+13c_1c_3c_4-5c_4^2+7c_1^3c_5+16c_1c_2c_5\\
& \hskip 1.9cm -3c_3c_5-7c_1^2c_6-13c_2c_6-3c_1c_7+3c_8) + \ldots
\end{aligned}$
\end{lemma}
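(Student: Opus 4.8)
The plan is to read off ${\rm Td}(T_X)$ from its definition as the multiplicative characteristic class attached to the power series $Q(x)=x/(1-e^{-x})$. Since $Q(x)=e^xQ(-x)$, one has $Q(x)-Q(-x)=x$, so the only odd monomial of $Q$ is $\tfrac12 x$, while its even part is $\tfrac{x}{2}\coth(x/2)=1+\sum_{k\ge1}\tfrac{B_{2k}}{(2k)!}x^{2k}$, with $B_{2k}$ the Bernoulli numbers; explicitly
$$Q(x)=1+\frac12 x+\frac{1}{12}x^2-\frac{1}{720}x^4+\frac{1}{30240}x^6-\frac{1}{1209600}x^8+\cdots.$$
By the splitting principle, if $a_1,a_2,\ldots$ denote the Chern roots of $T_X$, so that $c_i=c_i(X)=e_i(a_1,a_2,\ldots)$, then ${\rm Td}(T_X)=\prod_j Q(a_j)$, and it suffices to expand this product modulo cohomological degree $>8$ and to rewrite each homogeneous symmetric polynomial that appears in terms of $c_1,\ldots,c_8$ via the fundamental theorem of symmetric functions.

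The efficient way I would organize this is to pass to logarithms. Since $Q(x)=e^xQ(-x)$ gives $\log Q(x)-\log Q(-x)=x$, the series $\log Q$ equals $\tfrac12 x$ plus an even power series $\sum_{k\ge1}\lambda_{2k}x^{2k}$ with $\lambda_{2k}\in\Q$ obtained by taking the logarithm of the series above; hence
$$\log{\rm Td}(T_X)=\sum_j\log Q(a_j)=\frac12 p_1+\sum_{k\ge1}\lambda_{2k}\,p_{2k},$$
where $p_m=\sum_j a_j^m$ is the $m$-th power sum. One then substitutes the expressions for $p_1,p_2,p_4,p_6,p_8$ in terms of $c_1,\ldots,c_8$ provided by Newton's identities, truncates modulo cohomological degree $8$, and exponentiates the resulting polynomial. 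Collecting the homogeneous components of degrees $0$ through $8$ produces the asserted formula; the components of degree $\le4$ must reproduce the classical expressions for ${\rm td}_1,\ldots,{\rm td}_4$, which gives a convenient check on signs and normalizations.

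The only genuine difficulty here is bookkeeping: as the statement itself shows, the degree $7$ and degree $8$ components involve a great many monomials in the $c_i$, and exponentiating the log-series while correctly tracking all the cross-terms is long and error-prone by hand. In practice I would carry out this last step, and independently re-verify it, with a computer algebra system — just as is done for the Chern-class computations recorded elsewhere in this paper. Beyond this bookkeeping there is no conceptual obstacle.
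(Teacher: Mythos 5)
Your proposal is correct and matches the paper's approach in substance: the paper's ``proof'' is simply a citation of the Mathematica output (Out(40) in \cite{mc6}), which carries out exactly the standard expansion of $\prod_j Q(a_j)$, $Q(x)=x/(1-e^{-x})$, in terms of $c_1,\ldots,c_8$ that you describe. Your write-up supplies the conceptual derivation (splitting principle, Bernoulli expansion, Newton's identities) and, like the paper, delegates the degree $7$--$8$ bookkeeping to a computer algebra system, so there is nothing missing.
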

\begin{proof}
See \cite[Out(40)]{mc6}. 
\end{proof}

Next, we compute the Chern character of a vector bundle, up to degree $8$.

\begin{lemma} 
\label{ch}
Let $X$ be a smooth irreducible variety, let $\F$ be a rank $r$ vector bundle on $X$ and set $d_i=c_i(\F)$. Then we have:

$\begin{aligned}[t] 
{\rm Ch}(\F) & = r+d_1+\frac{1}{2}(d_1^2-2d_2)+\frac{1}{6}(d_1^3-3d_1d_2+3d_3)+\frac{1}{24}(d_1^4-4d_1^2d_2+4d_1d_3+2d_2^2-4d_4)\\
& +\frac{1}{120}(d_1^5-5d_1^3d_2+5d_1d_2^2+5d_1^2d_3-5d_2d_3-5d_1d_4+5d_5)\\
& +\frac{1}{720}(d_1^6-6d_1^4d_2+9d_1^2d_2^2-2d_2^3+6d_1^3d_3-12d_1d_2d_3+3d_3^2-6d_1^2d_4+6d_2d_4+6d_1d_5-6 d_6)\\
& +\frac{1}{5040}(d_1^7-7d_1^5d_2+14d_1^3d_2^2-7d_1d_2^3+7d_1^4d_3-21d_1^2d_2d_3+7d_2^2d_3+7d_1d_3^2 -7d_1^3 d_4\\
& \hskip 1.4cm +14d_1d_2d_4-7d_3d_4+7d_1^2d_5-7d_2d_5-7d_1d_6+7d_7)\\
& +\frac{1}{40320}(d_1^8-8d_1^6d_2+20d_1^4d_2^2-16d_1^2d_2^3+2d_2^4+8d_1^5d_3-32d_1^3d_2d_3+
24d_1d_2^2d_3+12d_1^2d_3^2\\
& \hskip 1.4cm -8d_2d_3^2-8d_1^4d_4+24d_1^2d_2d_4-8d_2^2d_4-16d_1d_3d_4+4d_4^2+8d_1^3d_5- 16d_1d_2d_5+8d_3d_5\\
& \hskip 1.4cm -8d_1^2d_6+8d_2d_6+8d_1d_7-8d_8)+\cdots
\end{aligned}$
\end{lemma}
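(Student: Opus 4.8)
The plan is to combine the splitting principle with Newton's identities. First I would introduce Chern roots $a_1,\ldots,a_r$ for $\F$, so that $d_i=c_i(\F)$ becomes the $i$-th elementary symmetric polynomial $e_i(a_1,\ldots,a_r)$, with the convention $d_i=0$ for $i>r$. Then, directly from the definition of the Chern character,
$$
{\rm Ch}(\F)=\sum_{j=1}^r e^{a_j}=\sum_{k\ge 0}\frac{1}{k!}\,p_k,\qquad p_k:=\sum_{j=1}^r a_j^k,
$$
so the degree-$k$ part of ${\rm Ch}(\F)$ is precisely $\frac{1}{k!}p_k$, and the problem reduces to writing each power sum $p_k$ (for $1\le k\le 8$) as a polynomial in $d_1,\ldots,d_k$.

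Next I would invoke Newton's identities in the form
$$
p_k=\sum_{i=1}^{k-1}(-1)^{i-1}d_i\,p_{k-i}+(-1)^{k-1}k\,d_k\qquad(k\ge 1),
$$
which, with the convention above, holds for every $k$ regardless of the rank $r$. This is a triangular recursion: $p_1=d_1$, and each $p_k$ is obtained from $d_1,\ldots,d_k$ together with the previously computed $p_1,\ldots,p_{k-1}$. For example it gives at once $p_2=d_1^2-2d_2$, $p_3=d_1^3-3d_1d_2+3d_3$ and $p_4=d_1^4-4d_1^2d_2+4d_1d_3+2d_2^2-4d_4$, which upon division by $2!$, $3!$, $4!$ reproduce the asserted formulas in degrees $2$, $3$, $4$.

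Running this recursion all the way up to $k=8$, substituting at each stage the fully expanded expression for $p_{k-i}$ and finally dividing $p_k$ by $k!$, produces the claimed closed formula for ${\rm Ch}(\F)$. The only real difficulty here is bookkeeping: in degrees $7$ and $8$ the intermediate polynomials have many terms and sign slips are easy to make, so in practice I would simplify each $p_k$ completely before feeding it into the next step and verify the final answer with a symbolic computation, exactly as was done for the other formulas in this appendix. There is no conceptual obstacle.
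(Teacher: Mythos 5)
Your proposal is correct. Note, though, that the paper gives no hand proof of this lemma at all: its ``proof'' is a citation to a symbolic computation (Out(103) of the Mathematica file \cite{mc6}), the formula being nothing but the universal expansion of the Chern character in Chern classes truncated in degree $8$. Your derivation via the splitting principle and Newton's identities is exactly the standard argument underlying that computation: writing $d_i=e_i(a_1,\dots,a_r)$ for the Chern roots, the degree-$k$ part of ${\rm Ch}(\F)$ is $p_k/k!$, and the recursion $p_k=\sum_{i=1}^{k-1}(-1)^{i-1}d_ip_{k-i}+(-1)^{k-1}kd_k$ (valid for all $k$ once one sets $d_i=0$ for $i>r$) reproduces the displayed terms; for instance it immediately gives $p_4=d_1^4-4d_1^2d_2+4d_1d_3+2d_2^2-4d_4$ and $p_5=d_1^5-5d_1^3d_2+5d_1d_2^2+5d_1^2d_3-5d_2d_3-5d_1d_4+5d_5$, matching the statement, and the degree $6$, $7$, $8$ terms are the same bookkeeping. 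The only point worth making explicit is the splitting principle itself (the flag-bundle pullback is injective on cohomology, so an identity among Chern classes verified for a sum of line bundles holds in general), which you implicitly use; with that said, your argument is complete, and the paper's reliance on machine computation is just a way of outsourcing the same triangular recursion in high degree.
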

\begin{proof}
See \cite[Out(103)]{mc6}. 
\end{proof}

\begin{lemma} 
\label{rr6}
Let $X$ be a smooth irreducible variety of dimension $6$ and let $\F$ be a rank $6$ vector bundle on $X$. Set $c_i=c_i(X)$ and $d_i=c_i(\F)$. Then we have:

$\begin{aligned}[t] 
\chi(\F) & =\frac{1}{10080}(2c_1^6-2c_1c_5+2c_6+11c_1^2c_2^2+11c_1c_2c_3-c_3^2)-\frac{1}{840}c_1^4c_2+\frac{1}{2016}(2c_2^3+c_1^3c_3-c_1^2c_4)\\
& -\frac{1}{1120}c_2c_4-\frac{1}{1440}(c_1^3c_2d_1-c_1^2c_3d_1+c_1c_4d_1+c_1^4 d_1^2-c_1c_3d_1^2+c_4d_1^2)\\
& +\frac{1}{480}(c_1c_2^2d_1+c_2^2d_1^2+2c_1 d_1^5-2c_2^2d_2+2d_3^2)+\frac{1}{288}(2c_1c_2d_1^3+c_1^2d_1^4+c_2d_1^4+2c_1^2d_2^2+2c_2d_2^2)\\
\end{aligned}$

$\begin{aligned}[t] 
& \hskip 1.1cm +\frac{1}{720}(d_1^6-2d_2^3+c_1^4d_2-c_1c_3d_2+c_4d_2-4c_1^2c_2d_2+2c_1^2c_2d_1^2)\\
& \hskip 1.1cm -\frac{1}{48}(c_1c_2d_1d_2+c_1d_1^3d_2-c_1d_1d_2^2-c_1c_2d_3-c_1d_1^2d_3+c_1d_2d_3+c_1d_1d_4-c_1d_5)\\
& \hskip 1.1cm +\frac{1}{72}(-c_1^2d_1^2d_2-c_2d_1^2d_2+c_1^2d_1d_3+c_2d_1d_3-c_1^2d_4-c_2d_4)\\
& \hskip 1.1cm +\frac{1}{80}d_1^2d_2^2-\frac{1}{120}(2d_1d_2d_3+d_1^4d_2-d_1^3d_3+d_1^2d_4-d_2d_4-d_1d_5+d_6).
\end{aligned}$
\end{lemma}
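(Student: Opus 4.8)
The plan is to derive the formula directly from the Hirzebruch--Riemann--Roch theorem. For a vector bundle $\F$ on the smooth complete variety $X$ of dimension $6$ this gives
\[
\chi(\F)=\int_X {\rm Ch}(\F)\cdot {\rm Td}(T_X),
\]
so $\chi(\F)$ is the number obtained by taking the weight-$6$ homogeneous part of the product ${\rm Ch}(\F)\cdot{\rm Td}(T_X)$ and integrating it over $X$; here a monomial $\prod_i c_i(X)^{a_i}\cdot\prod_j c_j(\F)^{b_j}$ has weight $\sum_i i\,a_i+\sum_j j\,b_j$. Because $\dim X=6$, only the components of ${\rm Ch}(\F)$ and ${\rm Td}(T_X)$ of weight at most $6$ matter, and these are exactly the ones written out in Lemma \ref{ch} (taken with rank $r=6$) and Lemma \ref{td} --- the higher-weight terms recorded there for later use are irrelevant here.

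Writing ${\rm Ch}_k(\F)$ and ${\rm Td}_k(T_X)$ for the weight-$k$ pieces, I would expand
\[
\bigl[{\rm Ch}(\F)\cdot{\rm Td}(T_X)\bigr]_{6}=\sum_{k=0}^{6}{\rm Ch}_k(\F)\cdot{\rm Td}_{6-k}(T_X),
\]
substitute the explicit polynomials from Lemmas \ref{ch} and \ref{td}, multiply out the seven products, and collect terms. Two of the summands can be matched with parts of the claimed formula at a glance: the $k=0$ term equals $r\,{\rm Td}_6(T_X)=6\,{\rm Td}_6(T_X)$, which supplies precisely all the monomials in the $c_i(X)$ alone (these receive no other contribution, since ${\rm Ch}_k(\F)$ contains a factor $c_j(\F)$ for every $k\ge 1$), and the $k=6$ term equals ${\rm Ch}_6(\F)$, which supplies all the monomials in the $c_j(\F)$ alone; the remaining, mixed, monomials come from the products with $1\le k\le 5$. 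Adding everything and simplifying produces the displayed identity.

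The argument uses no geometric input beyond Hirzebruch--Riemann--Roch and is completely formal in the symbols $c_i=c_i(X)$, $d_j=c_j(\F)$, so the only real obstacle is the bookkeeping: in weight $6$ there are many monomials built from the $c_i$ and $d_j$, and bringing the expansion into the compact shape above involves a large number of elementary cancellations, which makes a by-hand computation error-prone. I would therefore perform the multiplication and the collection of terms by a symbolic computation (as in \cite{mc6}); this merely chains together the two truncated power series already obtained for Lemmas \ref{ch} and \ref{td} and reads off the top-weight coefficient, and its output is the stated formula.
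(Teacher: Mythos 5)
Your proposal is correct and is essentially the paper's own argument: the Lemma is proved by applying Hirzebruch--Riemann--Roch, multiplying the truncated Chern character of Lemma \ref{ch} (with $r=6$) by the Todd class of Lemma \ref{td}, extracting the weight-$6$ part, and carrying out the bookkeeping symbolically (the paper delegates this to the Mathematica computation cited there). The only trivial discrepancy is the reference: the paper points to \cite[Out(70)]{mc4} rather than \cite{mc6} for this particular expansion.
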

\begin{proof}
Follows from Lemmas \ref{ch}, \ref{td} and Riemann-Roch (see \cite[Out(70)]{mc4}). 
\end{proof}

\begin{lemma}
\label{rr10}
Let $X$ be a smooth irreducible variety of dimension $6$ and let $\F$ be a rank $10$ vector bundle
on $X$. Set $c_i = c_i(X)$ and $d_i = c_i(\F)$. Then we have:

$\begin{aligned}[t]
\chi(\F) & =\frac{1}{120} (d_1d_5-d_6-d_1^2 d_4+d_2d_4+d_1^3 d_3-d_1^4 d_2)\\
& +\frac{1}{48}(-c_1d_1d_4+c_1d_5-c_1d_2d_3+c_1d_1^2 d_3+c_1c_2d_3+c_1d_1d_2^2-c_1d_1^3 d_2-c_1c_2d_1d_2)\\
& +\frac{1}{72}(-c_1^2 d_4-c_2d_4+c_2d_1d_3+c_1^2 d_1d_3-c_2d_1^2 d_2-c_1^2 d_1^2 d_2)\\
& +\frac{1}{240}(d_3^2-c_2^2 d_2+c_1d_1^5)-\frac{1}{60}d_1d_2d_3+\frac{1}{360}(-d_2^3+c_1^2 c_2d_1^2)+\frac{1}{80}d_1^2 d_2^2\\
& +\frac{1}{144} (c_1^2 d_2^2+c_2d_2^2+c_1c_2d_1^3)+\frac{1}{720}(-c_1c_3d_2+c_4d_2+d_1^6+c_1^4 d_2) \\
& -\frac{1}{180}c_1^2 c_2d_2+\frac{1}{288}(c_1^2 d_1^4+c_2d_1^4)+\frac{1}{1440} (c_1c_3d_1^2-c_4d_1^2+c_1^2 c_3d_1-c_1c_4d_1-c_1^4 d_1^2-c_1^3 c_2d1)\\
& +\frac{1}{480} (c_2^2 d_1^2+c_1c_2^2 d_1)+\frac{1}{3024}(c_1^6+5c_2^3-c_1c_5+c_6)-\frac{1}{672}c_2c_4\\
& +\frac{1}{6048} (5c_1^3 c_3+11c_1c_2c_3-c_3^2-5c_1^2 c_4+11c_1^2 c_2^2)-\frac{1}{504}c_1^4 c_2
\end{aligned}$
\end{lemma}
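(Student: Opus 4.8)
The plan is to apply the Hirzebruch-Riemann-Roch formula
\[
\chi(\F) = \int_X \ch(\F) \cdot \td(T_X),
\]
that is, to extract the degree-$6$ (top-dimensional) homogeneous component of the product $\ch(\F) \cdot \td(T_X)$ and evaluate it on the fundamental class of $X$. Since $\dim X = 6$ and $c_i(\F) = 0$ for $i > 6$, only the summands of degree at most $6$ of each factor are relevant, so the class to be evaluated is
\[
10\,\td_6(T_X) + \sum_{j=1}^{6} \ch_j(\F) \cdot \td_{6-j}(T_X),
\]
where $\ch_j(\F)$ and $\td_k(T_X)$ denote the degree-$j$ and degree-$k$ homogeneous parts of the Chern character and of the Todd class respectively, with $\td_0(T_X) = 1$ and $\ch_0(\F) = \rk\F = 10$.

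First I would substitute the truncation to degrees $\le 6$ of $\ch(\F)$ read off from Lemma \ref{ch}, specialised to $r = 10$: the pieces $\ch_1(\F), \dots, \ch_6(\F)$ are universal polynomials in $d_1, \dots, d_6$, so the rank enters only through the leading constant $10$, which multiplies $\td_6(T_X)$. Next I would substitute the truncation to degrees $\le 6$ of $\td(T_X)$ from Lemma \ref{td}, expressed in the variables $c_i = c_i(X)$. Multiplying the two truncated expansions, discarding every monomial of degree $\ne 6$ and collecting coefficients then yields the asserted expression: the purely intrinsic monomials (those in the $c_i$ alone) come exactly from $\tfrac{10}{60480}$ times the degree-$6$ Todd polynomial, which accounts for the coefficients $\tfrac{1}{6048}$, $\tfrac{1}{3024}$, $-\tfrac{1}{504}$ and $-\tfrac{1}{672}$ appearing in the last block of the formula, while the mixed monomials assemble into the remaining summands.

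The only real obstacle is the sheer size of the intermediate expansion: before cancellation the product of the two degree-$\le 6$ truncations involves a large number of monomials in $c_1, \dots, c_6, d_1, \dots, d_6$, and the bookkeeping is impractical by hand. I would therefore carry out the multiplication and truncation by a symbolic computation (see \cite{mc5}), exactly as was done for the rank-$6$ analogue in Lemma \ref{rr6}; once the inputs from Lemmas \ref{ch} and \ref{td} are accepted the result is purely mechanical, and the output coincides with the displayed formula term by term.
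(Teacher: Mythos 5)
Your proposal is correct and matches the paper's own argument: the paper likewise obtains the formula by Hirzebruch--Riemann--Roch, substituting the expansions of Lemmas \ref{ch} and \ref{td} and carrying out the degree-$6$ extraction by symbolic computation (cf.\ \cite[Out(10)]{mc5}). Your observation that the rank enters only through the factor $10\,\td_6(T_X)$, accounting for the purely intrinsic terms, is accurate and consistent with the stated coefficients.
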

\begin{proof}
Follows from Lemmas \ref{ch}, \ref{td} and Riemann-Roch (see \cite[Out(10)]{mc5}).
\end{proof}

\begin{lemma} 
\label{chiw24}
Let $X$ be a smooth irreducible variety of dimension $6$ and let $\F$ be a rank $4$ vector bundle on $X$. Set $c_i=c_i(X)$ and $f_i=c_i(\F)$. Then we have:
\begin{equation*}
\begin{split}
\chi((\Lambda^2 \F)(t)) & = \frac{H^6}{120}t^6+\frac{H^5}{40}(c_1+f_1)t^5+\frac{H^4}{48}(c_1^2+c_2+3c_1f_1+ 3f_1^2-4f_2)t^4\\
& +\frac{H^3}{24}(c_1c_2+c_1^2 f_1+c_2f_1+3c_1f_1^2+2f_1^3-4c_1f_2-4f_1f_2)t^3\\
& +\frac{H^2}{240}(-c_1^4+4c_1^2 c_2+3c_2^2+c_1c_3-c_4+15c_1c_2f_1+15c_1^2 f_1^2+15c_2f_1^2+30c_1f_1^3 +15f_1^4\\
& \hskip 1.2cm -20c_1^2 f_2-20c_2f_2-60c_1f_1f_2-40f_1^2 f_2+20f_2^2-20f_1f_3+80f_4)t^2\\
& +\frac{H}{240}(-c_1^3 c_2+3c_1c_2^2+c_1^2 c_3-c_1c_4-c_1^4 f_1+4c_1^2 c_2f_1+3c_2^2 f_1+
c_1c_3f_1-c_4f_1+15c_1c_2f_1^2\\
& \hskip 1.2cm +10c_1^2 f_1^3+10c_2f_1^3+15c_1f_1^4+6f_1^5-20c_1c_2f_2-20c_1^2 f_1f_2-20c_2f_1f_2-40c_1f_1^2 f_2\\
& \hskip 1.2cm -20f_1^3 f_2+20c_1f_2^2+20f_1f_2^2-20c_1f_1f_3-20f_1^2 f_3+80c_1f_4+80f_1f_4)t\\
& +\frac{1}{10080}(2c_1^6-12c_1^4 c_2+11c_1^2 c_2^2+10c_2^3+5c_1^3 c_3+11c_1c_2c_3-c_3^2-5c_1^2 c_4-9c_2 c_4-2c_1c_5+2c_6\\
& \hskip 1.2cm - 21c_1^3 c_2f_1+63c_1c_2^2 f_1+21c_1^2 c_3f_1-21c_1c_4f_1-21c_1^4 f_1^2+84c_1^2 c_2f_1^2+
63c_2^2 f_1^2+21c_1c_3f_1^2\\
& \hskip 1.2cm -21 c_4f_1^2+210c_1c_2f_1^3+105c_1^2 f_1^4+105c_2f_1^4+126c_1f_1^5+42f_1^6+28c_1^4 f_2-112c_1^2 c_2f_2\\
& \hskip 1.2cm -84c_2^2 f_2-28c_1c_3f_2+28c_4f_2-420c_1c_2f_1f_2-280c_1^2 f_1^2 f_2-280c_2f_1^2 f_2-420c_1f_1^3 f_2\\
& \hskip 1.2cm -168f_1^4 f_2+140c_1^2 f_2^2 +140c_2f_2^2+420c_1f_1f_2^2+252f_1^2 f_2^2-56f_2^3-140c_1^2 f_1f_3\\
& \hskip 1.2cm -140c_2f_1f_3-420c_1f_1^2 f_3-252f_1^3 f_3+
84f_1f_2f_3+84f_3^2+560c_1^2 f_4+560c_2f_4\\
& \hskip 1.2cm +1680c_1f_1f_4+1092f_1^2 f_4-672f_2f_4).
\end{split}
\end{equation*}
\end{lemma}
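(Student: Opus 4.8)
The plan is to apply the Hirzebruch-Riemann-Roch theorem
$$\chi((\Lambda^2 \F)(t)) = \int_X \ch\big((\Lambda^2 \F)(t)\big)\,\td(T_X),$$
in which, since $\dim X = 6$, only the degree-$6$ part of the integrand matters. Writing $\ch((\Lambda^2 \F)(t)) = \ch(\Lambda^2 \F)\cdot e^{tH}$, the coefficient of $t^k$ in $\chi((\Lambda^2 \F)(t))$ equals $\tfrac{1}{k!}$ times the integral over $X$ of $H^k$ against the degree-$(6-k)$ component of $\ch(\Lambda^2 \F)\cdot\td(T_X)$; in particular the $t^6$ coefficient is $\tfrac{1}{720}\rk(\Lambda^2 \F)\,H^6 = \tfrac{1}{120}H^6$ (as $\rk(\Lambda^2 \F)=\binom{4}{2}=6$), in agreement with the stated formula, and the lower-order coefficients are what must be identified.

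To make the right-hand side explicit I would carry out three steps. First, compute $\ch(\Lambda^2 \F)$: by Lemma \ref{w4} the Chern classes $c_i(\Lambda^2 \F)$, $1\le i\le 6$, are explicit polynomials in the $f_i=c_i(\F)$, and substituting them into the universal expression of Lemma \ref{ch} for the Chern character of a rank-$6$ bundle (kept in degrees $\le 6$) yields $\ch(\Lambda^2 \F)$ as a polynomial in $f_1,\dots,f_4$. Equivalently, via the splitting principle one may write $\ch((\Lambda^2 \F)(t)) = \sum_{i<j} e^{a_i+a_j+tH}$, where $a_1,\dots,a_4$ are the Chern roots of $\F$, and then re-express everything through the elementary symmetric functions $f_1,\dots,f_4$. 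Second, substitute $\td(T_X)$ from Lemma \ref{td} with $c_i=c_i(X)$. Third, multiply the two truncated power series, take the degree-$6$ part, replace $\int_X H^6$ by the degree, and collect the coefficients of $t^0,t^1,\dots,t^6$.

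The only obstacle is the size of this computation, which is entirely mechanical but far too long to perform reliably by hand: for each power of $t$ one must gather the coefficient of every monomial in the $c_i(X)$ and the $f_j$. I would therefore execute the final expansion and collection with a computer algebra system, exactly as in the other Riemann-Roch computations of this appendix. No conceptual point is involved: $\Lambda^2 \F$ is an ordinary vector bundle and all the needed inputs are supplied by Lemmas \ref{w4}, \ref{ch}, and \ref{td}.
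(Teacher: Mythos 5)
Your proposal is correct and follows essentially the same route as the paper: the paper applies Riemann--Roch to the rank-$6$ bundle $(\Lambda^2\F)(t)$ via Lemma \ref{rr6} (itself derived from Lemmas \ref{ch} and \ref{td}), substitutes the Chern classes of $\Lambda^2\F$ from Lemma \ref{w4}, and carries out the expansion with a computer algebra system, exactly as you describe. The only cosmetic difference is that you bypass the intermediate Lemma \ref{rr6} and multiply $\ch(\Lambda^2\F)e^{tH}$ by the Todd class directly.
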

\begin{proof}
Follows from Lemma \ref{rr6} (see \cite[Out(91)]{mc4}). 
\end{proof}

\begin{lemma} 
\label{chiw25}
Let $X$ be a smooth irreducible variety of dimension $6$ and let $\F$ be a rank $5$ vector bundle on $X$. Set $c_i=c_i(X)$ and $f_i=c_i(\F)$. Then we have:
\begin{equation*}
\begin{split}
\chi((\Lambda^2 \F)(t)) & = \frac{H^6}{72}t^6+\frac{H^5}{120}(5c_1+4f_1)t^5+\frac{H^4}{144}(5c_1^2+5c_2+12c_1 f_1+12f_1^2-18f_2)t^4\\ 
& +\frac{H^3}{72}(5c_1c_2+4c_1^2 f_1+4c_2f_1+12c_1f_1^2+8f_1^3-18c_1f_2-18f_1f_2+6f_3)t^3\\
& +\frac{H^2}{144}(-c_1^4+4c_1^2 c_2+3c_2^2+c_1c_3-c_4+12c_1c_2f_1+12c_1^2 f_1^2+12c_2f_1^2+24c_1f_1^3 +12f_1^4-18c_1^2 f_2\\
& \hskip 1.2cm -18c_2f_2-54c_1f_1f_2-36f_1^2 f_2+18f_2^2+18c_1f_3+36f_4)t^2\\
& +\frac{H}{720}(-5c_1^3 c_2+15c_1c_2^2+5c_1^2 c_3-5c_1c_4-4c_1^4 f_1+16c_1^2 c_2f_1+12c_2^2 f_1+4c_1c_3 f_1-4c_4f_1\\
& \hskip 1.2cm +60c_1c_2f_1^2+40c_1^2 f_1^3+40c_2f_1^3+60c_1f_1^4+24f_1^5-90c_1c_2f_2-90c_1^2 f_1f_2-90c_2 f_1f_2\\
& \hskip 1.2cm -180c_1f_1^2 f_2-90f_1^3 f_2+90c_1f_2^2+90f_1f_2^2+30c_1^2 f_3+30c_2f_3-30f_1^2 f_3-30f_2 f_3\\
& \hskip 1.2cm +180c_1 f_4+210f_1f_4-330f_5)t\\
& +\frac{1}{30240}(10c_1^6-60c_1^4 c_2+55c_1^2 c_2^2+50c_2^3+25c_1^3 c_3+55c_1c_2c_3-5c_3^2-25c_1^2 c_4-45c_2c_4-10c_1c_5\\
& \hskip 1.2cm +10c_6-84c_1^3 c_2f_1+252c_1c_2^2 f_1+84c_1^2 c_3f_1-84c_1c_4f_1-84c_1^4 f_1^2+ 336c_1^2 c_2f_1^2+252c_2^2 f_1^2\\
& \hskip 1.2cm +84c_1c_3f_1^2-84c_4f_1^2+840c_1c_2f_1^3+420c_1^2 f_1^4+420c_2f_1^4+ 
504c_1f_1^5+168f_1^6+126c_1^4 f_2\\
& \hskip 1.2cm -504c_1^2 c_2f_2-378c_2^2 f_2-126c_1c_3f_2+126c_4f_2-1890c_1c_2f_1f_2 -1260c_1^2 f_1^2f_2\\
& \hskip 1.2cm -1260c_2f_1^2 f_2-1890c_1f_1^3 f_2-756f_1^4 f_2+630c_1^2 f_2^2+630c_2f_2^2+1890c_1f_1 f_2^2+1134f_1^2 f_2^2\\
& \hskip 1.2cm -252f_2^3+630c_1c_2f_3-630c_1f_1^2 f_3-504f_1^3f_3-630c_1f_2f_3-252f_1f_2f_3+378f_3^2\\
& \hskip 1.2cm +1260c_1^2f_4+1260c_2f_4+4410c_1f_1f_4+3024f_1^2 f_4-1764f_2f_4- 6930c_1f_5-5544f_1f_5).
\end{split}
\end{equation*}
\end{lemma}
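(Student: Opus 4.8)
The plan is to reduce the statement to the Riemann--Roch formula already recorded in Lemma \ref{rr10}. Since $\F$ has rank $5$, the bundle $\Lambda^2 \F$ has rank $\binom{5}{2}=10$, and so does its twist $(\Lambda^2 \F)(t)=(\Lambda^2\F)\otimes\O_X(tH)$. Thus Lemma \ref{rr10}, which expresses $\chi$ of an arbitrary rank $10$ bundle on a smooth $6$-fold in terms of $c_i(X)$ and the Chern classes of the bundle, is exactly the tool to apply. The proof then consists of three purely formal steps: (1) express $c_i(\Lambda^2\F)$, $1\le i\le 6$, in terms of $f_i=c_i(\F)$; (2) express $c_i\big((\Lambda^2\F)(t)\big)$ in terms of the $c_i(\Lambda^2\F)$, of $t$ and of $H$; (3) substitute into the formula of Lemma \ref{rr10} and collect terms in $t$.

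For step (1) I would invoke Lemma \ref{w5}, whose items (1)--(6) give $c_1(\Lambda^2\F),\ldots,c_6(\Lambda^2\F)$ as explicit polynomials in $f_1,\ldots,f_5$ (only degrees $\le 6$ survive since $\dim X=6$). For step (2) I would use the standard twisting formula: if $\G$ has rank $e$, then, writing its Chern roots as $\gamma_j$, the Chern roots of $\G(t)$ are $\gamma_j+tH$, so that
$$c_k(\G(t))=\sum_{i=0}^{k}\binom{e-i}{k-i}\,t^{\,k-i}H^{k-i}\,c_i(\G),$$
applied with $\G=\Lambda^2\F$ and $e=10$. Feeding the resulting expressions for $c_1((\Lambda^2\F)(t)),\ldots,c_6((\Lambda^2\F)(t))$ into Lemma \ref{rr10}, and reducing products of complementary degree against $H^6$, one obtains $\chi((\Lambda^2\F)(t))$ as a degree-$6$ polynomial in $t$ whose coefficients are the asserted polynomials in $c_i=c_i(X)$, $f_i=c_i(\F)$ and $H$.

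As a consistency check, the leading coefficient must equal $\operatorname{rk}(\Lambda^2\F)\cdot\frac{H^6}{6!}=\frac{10}{720}H^6=\frac{H^6}{72}$, which matches the displayed formula. The only genuine difficulty is the size of the computation: after the substitutions above one is manipulating a rather large polynomial in the $c_i$, $f_i$, $H$ and $t$, so the identity is best verified with a computer algebra system, exactly as in the proof of the companion Lemma \ref{chiw24}; I would record the corresponding output in the auxiliary file \cite{mc5}.
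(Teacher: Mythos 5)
Your proposal is correct and follows essentially the same route as the paper: the authors likewise obtain the formula by feeding the Chern classes of $\Lambda^2\F$ from Lemma \ref{w5} (via the standard twisting formula) into the rank-$10$ Riemann--Roch expression of Lemma \ref{rr10}, with the expansion carried out by computer algebra and recorded in \cite{mc5}. Your leading-coefficient check $\frac{10}{720}H^6=\frac{H^6}{72}$ is a sensible sanity test and consistent with the stated result.
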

\begin{proof}
Follows from Lemma \ref{rr10} (see \cite[Out(31)]{mc5}). 
\end{proof}

\section{Riemann-Roch calculations on hypersurfaces}
\label{app3}

We now perform the necessary calculations on hypersurfaces used in the proof of Theorem \ref{main}.

\begin{lemma}
\label{suz4}
Let $\E$ be an Ulrich bundle of rank $4$ on a smooth hypersurface $X \subset \P^7$ of degree $d$. Then the following holds:
\begin{enumerate}
\item $\begin{aligned}[t]
\chi((\Lambda^2 \E)(m-2d+2))=& \frac{d}{120}m^6-\frac{d}{40}(-10+3d)m^5+\frac{5d}{72}(44-27d+4d^2)m^4\\
& -\frac{d}{72}(-1400+1320d-400d^2+39d^3)m^3\\
& +\frac{d}{360}(24419-31500d+14670d^2-2925d^3+208d^4)m^2\\
& -\frac{d}{360}(-44190+73257d-46700d^2+14310d^3-2080d^4+111d^5)m\\
& +\frac{d}{340200}(30562169-62639325d+51356676d^2-21546000d^3\\
& \hskip 1.7cm +4812171d^4-524475d^5+19984d^6).
\end{aligned}$
\end{enumerate}
\end{lemma}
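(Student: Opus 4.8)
The plan is to specialize the general Riemann--Roch formula of Lemma \ref{chiw24} to our hypersurface. Since $\E$ has rank $4$, the sheaf $\Lambda^2\E$ is locally free of rank $\binom{4}{2}=6$, so $(\Lambda^2\E)(m-2d+2)$ is a rank $6$ bundle on the smooth $6$-fold $X\subset\P^7$. Hence Lemma \ref{chiw24}, applied with $\F=\E$ and twisting parameter $t=m-2d+2$, expresses $\chi((\Lambda^2\E)(m-2d+2))$ as an explicit universal expression in $H$ and the Chern classes $c_i=c_i(X)$, $f_i=c_i(\E)$.

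Next I would substitute the data particular to the present situation. By Lemma \ref{xn} we have $c_i(X)=\big[\sum_{k=0}^{i}(-1)^{i-k}\binom{8}{k}d^{i-k}\big]H^i$ for $1\le i\le 6$, and by Lemma \ref{xne}(1)--(4) with $r=4$ the classes $c_1(\E),\dots,c_4(\E)$ are explicit rational multiples of $H,\dots,H^4$; since $\rk\E=4$ these account for all the Chern classes of $\E$. Only the components proportional to powers of $H$ ever enter the Euler characteristic (one pairs $\ch((\Lambda^2\E)(t))$ against $\td(T_X)$, which is $H$-proportional by Lemma \ref{xn}), so the formulas of Lemma \ref{xne} suffice here, and no further genericity hypothesis on $X$ is needed. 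Plugging all of this into Lemma \ref{chiw24}, together with $t=m-2d+2$ and the identity $H^6=\deg X=d$, turns every monomial into a rational multiple of $d$ and collapses the expression into a polynomial in $m$ and $d$ with rational coefficients.

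The remaining step is to expand and collect terms by powers of $m$, which yields the stated formula. The only obstacle here is the sheer size of the symbolic manipulation; it is purely mechanical and is therefore carried out by computer algebra (see \cite{mc4}). As a quick consistency check, the coefficient of $m^6$ must equal $\tfrac{1}{6!}\rk\big((\Lambda^2\E)(m-2d+2)\big)\deg X=\tfrac{6}{720}d=\tfrac{d}{120}$, in agreement with the formula, and the constant term is exactly the value $\chi((\Lambda^2\E)(2-2d))$ that feeds into \eqref{chiz4} in the proof of Theorem \ref{main}.
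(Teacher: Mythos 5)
Your route is the paper's route: apply the universal Riemann--Roch expression of Lemma \ref{chiw24} with $\F=\E$ and $t=m-2d+2$, substitute the Chern classes of $X$ from Lemma \ref{xn} and of $\E$ from Lemma \ref{xne}, use $H^6=d$, and let a computer algebra system expand and collect (the paper cites \cite[Out(100)]{mc4} for exactly this). The leading-coefficient sanity check $\frac{6}{720}d=\frac{d}{120}$ is correct and harmless.

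There is, however, one genuine flaw in your justification, and it sits at the only delicate point of the lemma. You assert that ``only the components proportional to powers of $H$ ever enter the Euler characteristic'' and conclude that no hypothesis beyond smoothness of $X$ is needed. This is false for the middle-dimensional class $c_3(\E)\in H^6(X,\Z)$: the constant term of Lemma \ref{chiw24} contains the summand $84f_3^2=84\,c_3(\E)^2$, and writing $c_3(\E)=e_3H^3+p$ with $p$ primitive one gets $c_3(\E)^2=e_3^2d+p^2$, where $p^2<0$ whenever $p\neq 0$ by the Hodge--Riemann relations for real primitive $(3,3)$-classes. So the numerical value of $\chi((\Lambda^2\E)(m-2d+2))$ is \emph{not} determined by the $H$-proportional parts alone; one genuinely needs $c_3(\E)$ to be a multiple of $H^3$ in $H^6(X,\Q)$, which is precisely what Lemma \ref{xne}(3) delivers under hypothesis (a) or (b) of Corollary \ref{ce}. (All other occurrences of $f_3$ in Lemma \ref{chiw24} are paired with classes proportional to $H^3$, so they only see $e_3$; the quadratic term $f_3^2$ is the single exception.) The fix is simply to carry that hypothesis along -- as the paper does in the proof of the main theorem, where for $n\ge 7$ one restricts to $X_6$ to satisfy (a), and for $n=6$ one assumes (a) or (b) -- rather than to argue it away.
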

\begin{proof}
(1) is obtained by Lemma \ref{chiw24}, replacing $t=m-2d+2$, the values of $c_i$ given in Lemma \ref{xn} and of $f_i$ given in Lemma \ref{xne} (see \cite[Out(100)]{mc4}). 
\end{proof}

\begin{lemma}
\label{suz5}
Let $\E$ be an Ulrich bundle of rank $5$ on a smooth hypersurface $X \subset \P^7$ of degree $d$. Then the following hold:
\begin{enumerate}
\item $\begin{aligned}[t]
\chi((\Lambda^2 \E)(m-\frac{5}{2}(d-1))& = \frac{d}{72}m^6-\frac{d}{24}(-11+4d)m^5+\frac{5d}{576}(713-528d+95d^2)m^4\\
\end{aligned}$

$\begin{aligned}[t]
& \hskip 2.8cm -\frac{5d}{288}(-2519+2852d-1045d^2+124d^3)m^3\\
& \hskip 2.8cm +\frac{d}{576}(98122-151140d+84675d^2-20460d^3+1795d^4)m^2\\
& \hskip 2.8cm -\frac{d}{576}(-199551+392488d-299200d^2+110540d^3-19745d^4+1356d^5)m\\
& \hskip 2.8cm +\frac{d}{1548288}(444410639-1072786176d+1044516123d^2-525127680d^3\\
& \hskip 4.7cm +143409693d^4-20047104d^5+ 1107385d^6).
\end{aligned}$
\item $\begin{aligned}[t]
\chi((\Lambda^3 \E)(m-\frac{5}{2}(d-1)))=& \frac{d}{72}m^6-\frac{d}{24}(-10+3d)m^5+\frac{5d}{576}(587-360d+53d^2)m^4\\
& -\frac{5d}{288}(-10+3d)(187-120d+17d^2)m^3\\
& +\frac{d}{576}(65362-84150d+38895d^2-7650d^3+535d^4)m^2\\
& -\frac{d}{288}(-10+3d)(5931-8025d+3790d^2-735d^3+47d^4)m\\
& +\frac{d}{1548288}(234265319-478275840d+388398675d^2-160473600d^3\\
& \hskip 1.9cm +35211813d^4-3790080d^5+146593d^6).
\end{aligned}$
\end{enumerate}
\end{lemma}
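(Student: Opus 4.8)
The plan is to reduce both formulas to the Riemann--Roch computation already packaged in Lemma~\ref{chiw25}, in the spirit of the proof of Lemma~\ref{suz4}. For part~(1), since $X \subset \P^7$ has dimension $6$ so that Lemma~\ref{chiw25} applies, I would take $\F = \E$ and $t = m - \frac{5}{2}(d-1)$ in that lemma, substituting for the $c_i = c_i(X)$ the values $c_i(X) = \bigl[\sum_{k=0}^{i}(-1)^{i-k}\binom{8}{k}d^{i-k}\bigr]H^i$ from Lemma~\ref{xn}, for the $f_i = c_i(\E)$ the values from Lemma~\ref{xne}(1)--(5) specialized to $r = 5$, and normalizing intersection numbers via $H^6 = \deg X = d$. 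Expanding and collecting powers of $m$ then produces the stated polynomial; this is the kind of long manipulation best carried out with a computer algebra system.

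For part~(2) I would first trade $\Lambda^3 \E$ for a second exterior square. Since $\E$ has rank $5$ we have $\Lambda^3 \E \cong \Lambda^2 \E^* \otimes \det \E$, and by Lemma~\ref{ulr}(ii) the bundle $\F := \E^*(K_X + 7H)$ is again a rank $5$ Ulrich bundle on $X$; as $K_X = (d-8)H$ for $X \subset \P^7$, this reads $\F = \E^*\bigl((d-1)H\bigr)$. Using in addition $\det \E = \O_X\bigl(\frac{5}{2}(d-1)\bigr)$ (Lemma~\ref{xne}(1), which requires no extra hypothesis here since $1 < n/2$), one obtains
\begin{equation*}
(\Lambda^3 \E)\bigl(m - \frac{5}{2}(d-1)\bigr) \;\cong\; (\Lambda^2 \F)\bigl(m - 2(d-1)\bigr),
\end{equation*}
hence $\chi\bigl((\Lambda^3 \E)(m - \frac{5}{2}(d-1))\bigr) = \chi\bigl((\Lambda^2 \F)(m - 2(d-1))\bigr)$. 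Now I would invoke Lemma~\ref{chiw25} a second time, with $t = m - 2(d-1)$ and $f_i = c_i(\F)$; since $\F$ is a rank $5$ Ulrich bundle on the same $X$, Lemma~\ref{xne} gives $c_i(\F) = c_i(\E)$, so the substitution is verbatim the one used in part~(1) and only the twist parameter changes. Expanding gives the second formula. Alternatively, one may compute the Chern classes $c_i(\Lambda^3 \E)$ directly from those of $\E$ by the splitting principle and feed them, together with the twist, into the rank-$10$ Riemann--Roch formula of Lemma~\ref{rr10}.

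No genuine conceptual obstacle is involved: the content is merely that $\chi$ of a twisted exterior power of an Ulrich bundle on a hypersurface is a universal polynomial in $d$ and $m$. The main burden is computational --- the output is a degree-$6$ polynomial in $m$ whose coefficients are polynomials of degree up to $6$ in $d$, so the bookkeeping is heavy and error-prone by hand, which is why it is done with machine assistance. One point to keep in mind is that invoking Lemma~\ref{xne} for the middle Chern class $c_3(\E)$ (equivalently $c_3(\F)$) presupposes that $X$ satisfies hypothesis (a) or (b) of Corollary~\ref{ce}; this is harmless because Lemma~\ref{suz5} is used in the proof of Theorem~\ref{main} only after a reduction to exactly that situation.
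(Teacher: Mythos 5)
Your proposal is correct and matches the paper's proof: part (1) is exactly the substitution into Lemma \ref{chiw25} with $t=m-\frac{5}{2}(d-1)$ and the Chern classes from Lemmas \ref{xn} and \ref{xne}, and part (2) rests on the same identity $\chi((\Lambda^3\E)(t))=\chi((\Lambda^2\E^*)(t+\frac{5}{2}(d-1)))$ that the paper uses, merely repackaged through the dual Ulrich bundle $\E^*((d-1)H)$ so that the Chern class inputs stay literally those of $\E$ instead of being sign-flipped. Your closing remark about the implicit hypothesis (a)/(b) of Corollary \ref{ce} needed for $c_3$ is a fair observation that applies equally to the paper's own proof and is indeed harmless in the way Lemma \ref{suz5} is deployed.
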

\begin{proof}
(1) is obtained by Lemma \ref{chiw25}, replacing $t=m-\frac{5}{2}(d-1)$, the values of $c_i$ given in Lemma \ref{xn} and of $f_i$ given in Lemma \ref{xne} (see \cite[Out(49)]{mc5}). (2) is obtained similarly from Lemma \ref{chiw25} using the fact that $\chi((\Lambda^3 \E)(t)=\chi((\Lambda^2 \E^*)(t+\frac{5}{2}(d-1))$ (see \cite[Out(50)]{mc5}). 
\end{proof}

\begin{lemma}
\label{suz6}
Let $\E$ be an Ulrich bundle of rank $6$ on a smooth hypersurface $X \subset \P^9$ of degree $d$. Then the following hold:
\begin{enumerate}
\item $\begin{aligned}[t]
& \chi(\Lambda^2 \E(m-3d+3)) = \frac{d}{2688}m^8-\frac{d}{672}(-14+5d)m^7+\frac{d}{960}(483-350d+62d^2)m^6\\
& -\frac{d}{960}(-14+5d)(469-350d+61d^2)m^5\\
& +\frac{d}{1920}(109837-164150d+89840d^2-21350d^3+1858d^4)m^4\\
& -\frac{d}{960}(-14 +5d)(20657-31850d+17705d^2-4200d^3+358d^4)m^3\\
& +\frac{d}{6720}(6549514-15182895d+14302806d^2-7010675d^3+1884820d^4-263130d^5+14870d^6)m^2\\
& -\frac{d}{13440}(-14+5d)(1699080-4071410d+3926321d^2-1949220d^3+524314d^4-72170d^5\\
& \hskip 3.4cm +3965d^6)m\\
& +\frac{d}{169344000}(233706519541-749294280000d+1023683569750d^2-778550661000d^3\\
& \hskip 2.4cm +360297139573d^4- 103729374000d^5+18104141400d^6-1748565000d^7\\
& \hskip 2.4cm +71669736d^8).
\end{aligned}$
\item $\begin{aligned}[t]
& \chi(\Lambda^3 \E(m-3d+3)) = \frac{d}{2016}m^8-\frac{d}{504}(-13+4d)m^7+\frac{d}{2160}(1247-780d+118d^2)m^6\\
& \hskip .8cm -\frac{d}{360}(-13+4d)(201-130d+19d^2)m^5\\
& \hskip .8cm +\frac{d}{4320}(241996-313560d+147155d^2-29640d^3+2154d^4)m^4\\
& \hskip .8cm -\frac{d}{2160}(-13+4d)(45111-60580d+28805d^2-5720d^3+394d^4)m^3\\
\end{aligned}$

$\begin{aligned}[t]
& +\frac{d}{30240}(24379978-49261212d+39993401d^2-16691220d^3+3762129d^4-430248d^5\\
& \hskip 1.5cm +19032d^6)m^2\\
& -\frac{d}{30240}(-13+4d)(3116229-6542692d+5444216d^2-2290964d^3+509035d^4\\
& \hskip 1.5cm -55224d^5+2040d^6)m\\
& +\frac{d}{508032000}(483969803049-1361168827200d+1612701345950d^2-1050469056000d^3\\
& \hskip 2.4cm +409833928497d^4- 97149124800d^5+13318661400d^6-891072000d^7\\
& \hskip 2.4cm +14981104d^8).
\end{aligned}$
\item $\begin{aligned}[t]
& \chi(\Lambda^4 \E(m-3d+3)) = \frac{d}{2688}m^8-\frac{d}{224}(-4+d)m^7+\frac{d}{960}(353-180d+22d^2)m^6\\
& -\frac{3d}{320}(-4+d)(113-60d+7d^2)m^5+\frac{d}{1920}(57317-61020d+23300d^2-3780d^3+218d^4)m^4\\
& -\frac{d}{320}(-4+d)(10517-11700d+4535d^2-720d^3+38d^4)m^3\\
& +\frac{d}{3360}(1185579-1987713d+1324764d^2-448875d^3+80843d^4-7182d^5+239d^6)m^2\\
& -\frac{d}{4480}(-4+d)(590076-1038060d+713205d^2-243720d^3+42698d^4-3420d^5+101d^6)m\\
& +\frac{d}{169344000}(56633150341-133829236800d+131659211350d^2-70341793200d^3\\
& \hskip 2.4cm +22114878373d^4-4099183200 d^5+425383800d^6-22906800d^7+656136d^8).
\end{aligned}$
\end{enumerate}
\end{lemma}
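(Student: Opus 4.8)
The proof follows the scheme of Lemmas \ref{suz4} and \ref{suz5}, now carried out in dimension $8$. The plan is to apply Hirzebruch--Riemann--Roch on $X$: for a vector bundle $\F$ one has $\chi(\F)=\int_X \ch(\F)\,\td(T_X)$, and since $\dim X=8$ only the degree $8$ part of the integrand matters. Lemma \ref{ch} expresses $\ch(\F)$ through degree $8$ in terms of $c_1(\F),\dots,c_8(\F)$, and Lemma \ref{td} expresses $\td(T_X)$ through degree $8$ in terms of $c_1(X),\dots,c_8(X)$, so this degree $8$ component can be written out explicitly once the relevant Chern classes are known. (Equivalently, one could first package Riemann--Roch formulas for bundles of rank $15$ and $20$ on an $8$-fold and then the Chern character of $\Lambda^i\E$, in the style of Lemmas \ref{rr6} and \ref{chiw24}, but this is not necessary.)

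First I would record the inputs as rational multiples of powers of $H$. Since $X\subset\P^9$ is a degree $d$ hypersurface, $\int_X H^8=d$; by Lemma \ref{xn} each $c_i(X)$ is an explicit polynomial in $d$ times $H^i$; and by Lemma \ref{xne} (for the case $i=4=\frac{n}{2}$ one of the hypotheses (a), (b) of Corollary \ref{ce} is in force, exactly as in the proof of Theorem \ref{main}) each $c_i(\E)$ with $1\le i\le 6$ is an explicit polynomial in $d$ times $H^i$, while $c_i(\E)=0$ for $i>6$. Feeding $c_1(\E),\dots,c_6(\E)$ into Lemma \ref{w6} yields $c_i(\Lambda^2\E)$ and $c_i(\Lambda^3\E)$ for $1\le i\le 8$, again as explicit multiples of $H^i$. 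For the fourth exterior power I would use the isomorphism $\Lambda^4\E\cong\Lambda^2\E^*\otimes\det\E$ together with $\det\E=\O_X(3(d-1))$ (which follows from Lemma \ref{ulr}(iv) and $K_X=(d-10)H$, consistently with the twist in \eqref{res6}); this gives $\chi((\Lambda^4\E)(m-3d+3))=\chi\bigl((\Lambda^2\E^*)(m)\bigr)$, and the Chern classes of $\Lambda^2\E^*$ are obtained from $c_i(\E^*)=(-1)^ic_i(\E)$ via Lemma \ref{w6}.

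With these data, for each $i\in\{2,3,4\}$ I would form $\ch\bigl((\Lambda^i\E)(t)\bigr)=\ch(\Lambda^i\E)\cdot e^{tH}$ (Lemma \ref{ch} applied with $r=\binom{6}{i}$ and $d_j=c_j(\Lambda^i\E)$), multiply by $\td(T_X)$, extract the coefficient of $H^8$, and multiply by $d=\int_X H^8$. This produces $\chi((\Lambda^i\E)(t))$ as an explicit polynomial in $t$ and $d$; substituting $t=m-3d+3$ (respectively $t=m$ in the reduced form for $i=4$) and collecting powers of $m$ yields formulas (1)--(3).

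The only genuine obstacle is the size of the symbolic computation: the degree $8$ part of $\ch\cdot\td$ is a long polynomial in many Chern monomials, and after the substitutions the coefficients of the powers of $m$ are degree $8$ polynomials in $d$ with very large integer coefficients, as the statement shows. This step is carried out on a computer (see \cite{mc6}), exactly as for Lemmas \ref{suz4} and \ref{suz5}; conceptually nothing is needed beyond Hirzebruch--Riemann--Roch and the combinatorics of Chern classes of exterior powers, and the truncations of $\ch$ and $\td$ at degree $8$ are harmless since all higher-degree contributions integrate to zero on the $8$-fold $X$.
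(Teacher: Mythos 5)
Your proposal is correct and follows essentially the same route as the paper: Hirzebruch--Riemann--Roch on the $8$-fold $X$ with the Chern classes of $\Lambda^i\E$ computed from Lemmas \ref{w6}, \ref{xn} and \ref{xne} (the $i=4=\tfrac{n}{2}$ caveat you flag is indeed implicitly in force in the application), and the identity $\chi((\Lambda^4\E)(t))=\chi((\Lambda^2\E^*)(t+3(d-1)))$ for part (3), with the heavy symbolic work delegated to the Mathematica code \cite{mc6}.
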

\begin{proof}
(1) is obtained by the expression of $\chi((\Lambda^2 \F)(t)$ for a rank $6$ bundle $\F$ (see \cite[Out(111)]{mc6}), replacing $t=m-3(d-1)$, the values of $c_i$ given in Lemma \ref{xn} and of $f_i$ given in Lemma \ref{xne} (see \cite[Out(138)]{mc6}). (2) is obtained by the expression of $\chi((\Lambda^3 \F)(t)$ for a rank $6$ bundle $\F$ (see \cite[Out(112)]{mc6}), replacing $t=m-3(d-1)$, the values of $c_i$ given in Lemma \ref{xn} and of $f_i$ given in Lemma \ref{xne} (see \cite[Out(139)]{mc6}). 
(3) is obtained using the expression of $\chi((\Lambda^2 \F)(t)$ and the fact that $\chi((\Lambda^4 \E)(t)=\chi((\Lambda^2 \E^*)(t+3(d-1))$ (see \cite[Out(140)]{mc6}).
\end{proof}

\begin{lemma}
\label{suz7}
Let $\E$ be an Ulrich bundle of rank $7$ on a smooth hypersurface $X \subset \P^9$ of degree $d$. Then the following hold:
\begin{enumerate}
\item $\begin{aligned}[t]
& \chi((\Lambda^2 \E)(m-\frac{7}{2}(d-1))) = \frac{d}{1920}m^8-\frac{d}{160}(-5+2d)m^7+\frac{7d}{8640}(-20+7d)(-50+23d)m^6\\
& -\frac{7d}{960}(-5+2d)(325-270d+53d^2)m^5\\
& +\frac{7d}{138240}(2110467-3510000d+2146690d^2-572400d^3+56147d^4)m^4\\
& -\frac{7d}{23040}(-5+2d)(400467-684000d+424090d^2-113040d^3+10931d^4)m^3\\
& +\frac{d}{138240}(294927561-756882630d+792886542d^2-434114100d^3+131018209d^4\\
& \hskip 1.7cm -20659590d^5+1328936d^6)m^2\\
& -\frac{d}{23040}(-5+2d)(19408518-51222105d+54741054d^2-30313350d^3+9168002d^4\\
& \hskip 1.6cm -1434465d^5+90682d^6)m\\
& +\frac{d}{143327232000}(513397845100961-1811047631616000d+2735536296233740d^2\\
& \hskip 2.9cm -2311436590848000d^3+1194935635595478d^4-386871738624000d^5\\
& \hskip 2.9cm +76557801497260d^6-8461718784000d^7+399973316561d^8).
\end{aligned}$
\item $\begin{aligned}[t]
& \chi((\Lambda^3\E)(m-\frac{7}{2}(d-1)))=\frac{d}{1152}m^8-\frac{d}{288}(-14+5d)m^7+\frac{7d}{1728}(290-210d+37d^2)m^6\\ 
& -\frac{7d}{288}(-14+5d)(47-35d+6d^2)m^5\\
& +\frac{7d}{138240}(2647681-3948000d+2146070d^2-504000d^3+43089d^4)m^4\\
& -\frac{7d}{69 120}(-14+5d)(499521-767200d+421670d^2-98000d^3+8089d^4)m^3\\
& +\frac{d}{414720}(954207685-2202887610d+2057673702d^2-995204700d^3+262468605d^4\\
& \hskip 1.7cm -35672490d^5+1939448d^6)m^2\\
& -\frac{d}{414720}(-14+5d)(124417969-296353470d+282424737d^2-137577300d^3+35979531d^4\\
& \hskip 1.7cm -4750830d^5+242723d^6)m\\
& +\frac{d}{8957952000}(29526126063793-94059984564000d+127169755078220d^2-95268653172000d^3+\\
& \hskip 2.6cm 43183463113014d^4-12086573436000d^5+2026225100780d^6-183498588000d^7\\
& \hskip 2.6cm +6668724193d^8).
\end{aligned}$
\item $\begin{aligned}[t]
& \chi((\Lambda^4\E)(m-\frac{7}{2}(d-1)))= \frac{d}{1152}m^8-\frac{d}{288}(-13+4d)m^7+\frac{7d}{3456}(499-312d+47d^2)m^6\\
& -\frac{7d}{1152}(-7+3d)(-13+4d)(-23+5d)m^5\\
& +\frac{7d}{34560}(485239-627900d+293180d^2-58500d^3+4191d^4)m^4\\
& -\frac{7d}{17280}(-13+4d)(90624-121420d+57245d^2-11180d^3+751d^4)m^3\\
& +\frac{d}{51840}(73648124-148442112d+119778477d^2-49475790d^3+10983966d^4-1230138d^5\\
& \hskip 1.6cm +53053d^6)m^2\\
& -\frac{d}{103680}(-13+4d)(18886837-39510588d+32595240d^2-13514280d^3+2935833d^4-308412d^5\\
& \hskip 1.7cm +11210d^6)m\\
& +\frac{d}{4478976000}(7572278446559-21213695318400d+24947874489460d^2-16064770176000d^3\\
& \hskip 2.5cm +6166188349482d^4-1429690953600d^5+190927651540d^6-12591072000d^7\\
& \hskip 2.5cm +242742959d^8).
\end{aligned}$  
\item $\begin{aligned}[t]
& \chi((\Lambda^5\E)(m-\frac{7}{2}(d-1)))= \frac{d}{1920}m^8-\frac{d}{160}(-4+d)m^7+\frac{7d}{17280}(1271-648d+79d^2)m^6\\
& -\frac{7d}{1920}(-4+d)(407-216d+25d^2)m^5\\
& +\frac{7d}{34560}(206553-219780d+83680d^2-13500d^3+773d^4)m^4\\
& -\frac{7d}{5760}(-4+d)(37929-42156d+16261d^2-2556d^3+134d^4)m^3\\
& +\frac{d}{17280}(8560242-14337162d+9522975d^2-3207330d^3+573356d^4-50652d^5+1687d^6)m^2\\
& -\frac{d}{11520}(-4+d)(2133108-3746844d+2561847d^2-867816d^3+150574d^4-12060d^5\\
& \hskip 1.6cm +359d^6)m\\
\end{aligned}$ 

$\begin{aligned}[t]
& +\frac{d}{143327232000}(67498793060561-159235658956800d+156004224862540d^2\\
& \hskip 2.9cm -82788720537600d^3+25821414047478d^4-4758314803200d^5\\
& \hskip 2.9cm +494189940460d^6-26799206400d^7+743464961d^8).
\end{aligned}$  
\end{enumerate}
\end{lemma}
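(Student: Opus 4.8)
The plan is to repeat, for rank $7$ and dimension $8$, exactly the scheme already used in Lemmas \ref{suz4}, \ref{suz5} and \ref{suz6}: write $\chi((\Lambda^i \E)(m-\tfrac72(d-1)))$ by Hirzebruch--Riemann--Roch on the smooth $8$-fold $X$, reduce everything to the single invariant $H^8=\deg X=d$ by substituting $c_j(X)$ from Lemma \ref{xn} and $c_j(\E)$ from Lemma \ref{xne}, and then collect the coefficients of the powers of $m$. The one step not yet recorded in the text is the Riemann--Roch expansion for twisted exterior powers in dimension $8$, that is, the dimension-$8$ analogues of Lemmas \ref{rr6}, \ref{rr10}, \ref{chiw24}, \ref{chiw25}. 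To produce these, I would take a smooth irreducible $8$-fold $Y$ and a rank $7$ bundle $\F$ on it, use Lemma \ref{w7}(1)--(8) (resp. (9)--(16)) to express the Chern classes of $\Lambda^2\F$ (resp. $\Lambda^3\F$) in terms of those of $\F$, plug these into the degree-$8$ truncation of the Chern character (Lemma \ref{ch}), multiply by the degree-$8$ truncation of the Todd class of $Y$ (Lemma \ref{td}), and extract the degree-$8$ component. This yields universal polynomials $\chi((\Lambda^i\F)(t))$ in $t$, $c_j(Y)$, $c_j(\F)$, which are the contents of the computer-algebra file \cite{mc7}, in complete parallel with \cite[Out(111)--(112)]{mc6}.

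\emph{Parts (1) and (2).} Here I specialize the universal formulas of the previous paragraph to $Y=X$, $\F=\E$. Since $X\subset\P^9$ is a smooth hypersurface of degree $d$, Lemma \ref{xn} gives $c_j(X)=\big[\sum_{k=0}^j(-1)^{j-k}\binom{10}{k}d^{j-k}\big]H^j$; since $\E$ is a rank $7$ Ulrich bundle, Lemma \ref{xne}(1)--(4),(8)--(10) gives $c_j(\E)=e_jH^j$ for the explicit scalars $e_j$ (for $j=4$ this uses condition (a) or (b) of Corollary \ref{ce}, which is exactly the situation in which the lemma is invoked in the proof of Theorem \ref{main}). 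After the substitution every monomial becomes a rational multiple of $H^8=d$, so the universal expression collapses to a polynomial in $m$ and $d$; putting $t=m-\tfrac72(d-1)$ then gives (1) for $i=2$ and (2) for $i=3$.

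\emph{Parts (3) and (4).} Rather than compute $\Lambda^4\F$ and $\Lambda^5\F$ directly (these do not appear in Lemma \ref{w7}), I use the perfect pairing $\Lambda^{7-i}\E\cong(\Lambda^i\E^*)\otimes\det\E$ together with $\det\E=\O_X(\tfrac72(d-1))$ (Lemma \ref{ulr}(iv),(vi)); thus $\chi((\Lambda^4\E)(t))=\chi((\Lambda^3\E^*)(t+\tfrac72(d-1)))$ and $\chi((\Lambda^5\E)(t))=\chi((\Lambda^2\E^*)(t+\tfrac72(d-1)))$. I then substitute $\F=\E^*$, i.e. $c_j(\F)=(-1)^je_jH^j$, into the universal formulas of the first paragraph and specialize the hyperplane class exactly as above, which is precisely the argument used for Lemma \ref{suz6}(3).

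The only real difficulty is the sheer size of the polynomials involved — the degree-$8$ Todd class, $c_\bullet(\Lambda^3\F)$ for a rank $7$ bundle, and the resulting Euler characteristics are all large — so the manipulations must be carried out with computer algebra (\cite{mc7}); no idea beyond Riemann--Roch and the splitting principle enters. As consistency checks I would verify that the coefficient of $m^8$ equals $\tfrac{1}{8!}\binom{7}{i}d=\rk(\Lambda^i\E)\cdot\deg X/8!$ (namely $\tfrac{d}{1920}$ for $i=2,5$ and $\tfrac{d}{1152}$ for $i=3,4$, as in the statement), and that the outputs satisfy the Serre-duality functional equation $\chi((\Lambda^i\E)(t))=\chi((\Lambda^{7-i}\E)(d-10-\tfrac72(d-1)-t))$ on the $8$-fold $X$ (using $\omega_X=\O_X(d-10)$), which cross-checks (1) against (4) and (2) against (3).
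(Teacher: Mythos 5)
Your proposal is correct and follows exactly the paper's method: the paper's proof of this lemma is simply a pointer to the computer-algebra outputs in \cite{mc7}, which implement precisely the Hirzebruch--Riemann--Roch computation you describe (Lemmas \ref{td}, \ref{ch}, \ref{w7} specialized via Lemmas \ref{xn} and \ref{xne}, with parts (3) and (4) obtained from $\chi((\Lambda^{7-i}\E)(t))=\chi((\Lambda^{i}\E^*)(t+\tfrac72(d-1)))$, in parallel with Lemma \ref{suz6}(3)). Your leading-coefficient and Serre-duality consistency checks are a sensible addition but not part of the paper's argument.
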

\begin{proof}
See Out(108)-Out(111) in \cite{mc7}.
\end{proof}

\end{document}